\newtheorem{thm}{Theorem}[section]
\newtheorem{prop}[thm]{Proposition}
\newtheorem{lem}[thm]{Lemma}
\theoremstyle{definition}
\newtheorem{defn}[thm]{Definition}
\newtheorem{cor}{Corollary}[section]
\newtheorem{ex}[thm]{Example}
\numberwithin{equation}{section}
\title[Frame-like Fourier expansions for $\mathbb{R}$ ]{Frame-like Fourier expansions for finite Borel measures on $\mathbb{R}$}
\author[C. Berner]{Chad Berner}
\address[C. Berner]{Iowa State Unveristy, USA}
\email{{\tt cberner@iastate.edu}}
\keywords{Fourier expansions on the torus, frames, finite Borel measures, singular measures, model subspaces}
\subjclass[2020]{42A16, 42A20, 42C15, 46C07}
\begin{document}

\begin{abstract}
 It is known that if a finite Borel measure $\mu$ on $[0,1)$ possesses a frame of exponential functions for $L^{2}(\mu)$, then $\mu$ is of pure type. In this paper, we prove the existence of a class of finite Borel measures $\mu$ on $[0,1)$ that are not of pure type that possess frame-like Fourier expansions for $L^{2}(\mu)$. We also show properties and classifications of certain measures possessing this type of Fourier expansion. Additionally, we establish a frame-like Fourier expansion for $L^{2}(\mu)$ where $\mu$ is a singular Borel probability measure on $\mathbb{R}$. Finally, we show measures $\mu$ on $[0,1)$ that possess these frame-like Fourier expansions for $L^{2}(\mu)$ have all $f\in L^{2}(\mu)$ as $L^{2}(\mu)$ limits of harmonic functions with frame-like coefficients. We also discuss when the inner products of these expansions coincide with model spaces and subspaces of harmonic functions on the disk.
\end{abstract}

\maketitle

\section{Introduction}

We are interested in describing and proving the existence of a class of finite Borel measures $\mu$ whose $L^{2}$ spaces can have their elements expressed as Fourier expansions whose coefficients come from inner products. Substantial work has been provided for finding measures $\mu$ where a sequence of exponential functions forms an orthonormal basis of $L^{2}(\mu)$, and it is clear that Lebesgue measure on $[0,1)$ is an example of this. Additionally, there are even examples of singular measures $\mu$ which possess a sequence of exponential functions that form an orthonormal basis of $L^{2}(\mu)$ \cite{Jorgensen1998Dense}.

Furthermore, there is recent work in understanding which measures $\mu$ possess a sequence of exponential functions that form a frame for $L^{2}(\mu)$.
If a space $L^{2}(\mu)$ admits a frame consisting of exponential functions, there is a canonical dual frame sequence in $L^{2}(\mu)$ so that every element of $L^{2}(\mu)$ can be expressed as a Fourier expansion. Initially, Duffin and Schaffer provided substantial work in Fourier series from introducing frame theory in the Fourier analysis \cite{Duffin1952Class}. More recently however, in the case that $\mu$ is absolutely continuous, we know exactly when it possesses a frame of exponential functions \cite{Lai2011Fourier}. It is also known that if $\mu$ possesses a frame of exponential functions, then $\mu$ is of pure type, that is singular or absolutely continuous \cite{He2013Exponential}. While there are examples known of measures $\mu$ that are singular which possess a frame of exponential functions such as in \cite{Picioroaga2017Fourier}, when exactly measures $\mu$ have this property is still an open problem.
For more discussion on singular measures $\mu$ that possess a sequence of exponential functions that form a frame of $L^{2}(\mu)$, one can also read the work of Dutkay, Han, Sun, and Weber \cite{Dutkay2011Beurling}.

The purpose of this paper is to prove the existence of a class of finite Borel measures $\mu$ on $[0,1)$ that are not of pure type which possess a sequence $\{g_{n}\}_{n=-\infty}^{\infty}\subseteq L^{2}(\mu)$
such that for all $f\in L^{2}(\mu)$,
\begin{equation}\label{prp}
   f=\sum_{n=-\infty}^{\infty}\langle f,g_{n}\rangle e^{2\pi i nx}. 
\end{equation}
We also want to understand characteristics of measures with this property.
Furthermore, we are specifically interested in the case where the frequencies of the exponential functions come from $\mathbb{Z}$.
The secondary purpose of this paper is to provide another type of frame-like Fourier expansion for $L^{2}(\mu)$ where $\mu$ is a singular Borel probability measure on $\mathbb{R}$.

The outline of this paper is as follows: section two is some known results and early observations of the frame-like expansions we are interested in from equation (\ref{prp}). 
Section three is a few results on classifying certain measures with the property from equation (\ref{prp}) and after this, we prove the existence of a class of measures with this property that are not of pure type. Additionally, in section four we prove the existence of another type of frame-like Fourier expansion for elements of $L^{2}(\mu)$ where $\mu$ is a singular Borel probability measure on $\mathbb{R}$. Finally, section five of this paper is establishing some connections between Bessel sequences and subspaces of harmonic functions on the disk. In particular, we show that if a measure has the property from equation (\ref{prp}) where $\{g_{n}\}$ is Bessel, then every $f\in L^{2}(\mu)$ is the $L^{2}(\mu)$ boundary of a harmonic function whose coefficients come from $\{g_{n}\}$.

\section{Preliminaries}
We begin by recalling some frame terminology in a Hilbert space.
\begin{defn}
Let $\{g_{n}\}\subseteq H$ where $H$ is a Hilbert space.
\begin{enumerate}
    \item If there exists $A,B>0$ such that
    $$A||f||^{2}\leq \sum_{n}|\langle f, g_{n}\rangle|^{2}\leq B||f||^{2}$$
    for all $f\in H$, then $\{g_{n}\}$ is called a \textbf{frame}.
    \item If $\{g_{n}\}$ is a frame and $A=B=1$, then $\{g_{n}\}$ is called a \textbf{Parseval frame}.
    \item If $\{g_{n}\}$ is a frame and if
$$\sum c_{n}g_{n}=0 \implies \{c_{n}\}=0,$$ where $\{c_{n}\}\in \ell^{2}$, we say $\{g_{n}\}$ is a \textbf{Riesz basis}.
\item If $A$ or $B$ exist, we call $\{g_{n}\}$ a \textbf{semi-lower frame} or a \textbf{Bessel sequence}, respectively. 
\end{enumerate}
Also, for Bessel sequence $\{g_{n}\}$, the map $g\to \{\langle g,g_{n}\rangle\}$ is the \textbf{analysis operator} associated with $\{g_{n}\}$, and its adjoint map: $\{c_{n}\}\to \sum_{n}c_{n}g_{n}$ is called the \textbf{synthesis operator}.
Furthermore, if $\{g_{n}\}$ is a frame, the map $g\to \sum_{n}\langle g,g_{n}\rangle g_{n}$ is the \textbf{frame operator}, and it is positive definite and invertible.
\end{defn}
Now we define what it means for a sequence in a Hilbert space to have a \textbf{dextrodual}. This definition is similar to a definition of psudeo-duals discussed by Li and Ogawa in \cite{Li2001Pseudo-duals}, and it is the frame-like Fourier expansions we are discussing in section three.
\begin{defn}
    We say that sequence $\{g_{n}\}$ is \textbf{dextrodual} to sequence $\{x_{n}\}$ in Hilbert space $H$ if for all $x\in H$,
    $$x=\sum_{n}\langle x,g_{n}\rangle x_{n}$$ where the convergence is in norm and may be conditional.
\end{defn}

It is clear that if $\{x_{n}\}$ is a frame in Hilbert space $H$, then its canonical dual frame sequence is dextrodual to $\{x_{n}\}$. However as we will see soon, a sequence having a dextrodual sequence is strictly weaker than the sequence being a frame. 
\subsection{Dextrodual observations}
We explore some initial observations of this notion in a Hilbert space.
\begin{lem}\label{RB}
If a sequence $\{g_{n}\}$ is a Riesz basis in a Hilbert space $H$ and is dextrodual to sequence $\{x_{n}\}$, then $\{x_{n}\}$ is the canonical dual frame of $\{g_{n}\}$. In particular, $\{x_{n}\}$ is a Riesz basis.
\end{lem}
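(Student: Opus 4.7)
The plan is to exploit the fact that a Riesz basis has a unique biorthogonal sequence, which coincides with its canonical dual frame. So write $\{\widetilde{g}_n\}$ for the canonical dual of the Riesz basis $\{g_n\}$; we then have the biorthogonality relations $\langle \widetilde{g}_m, g_n\rangle = \delta_{mn}$ and the reconstruction $x = \sum_n \langle x, g_n\rangle \widetilde{g}_n$ for every $x \in H$. The goal reduces to showing $x_n = \widetilde{g}_n$ for all $n$.

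The main move is to substitute $x = \widetilde{g}_m$ into the dextrodual expansion
\[
    x = \sum_n \langle x, g_n\rangle\, x_n.
\]
By biorthogonality, $\langle \widetilde{g}_m, g_n\rangle = \delta_{mn}$, so every term on the right vanishes except the $n=m$ term, which equals $x_m$. Hence $\widetilde{g}_m = x_m$ for each $m$, which identifies $\{x_n\}$ with the canonical dual frame. Since the canonical dual of a Riesz basis is itself a Riesz basis, the second claim follows immediately.

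The only delicate point is the conditional convergence allowed in the definition of dextrodual: a priori we cannot rearrange the series $\sum_n \langle x, g_n\rangle x_n$ at will. However, this is not an actual obstacle here, because after the substitution $x = \widetilde{g}_m$ only one coefficient is nonzero, so regardless of the summation order the partial sums must eventually stabilize at $x_m$. Thus no delicate analysis of convergence is needed, and the proof is essentially a one-line application of biorthogonality.
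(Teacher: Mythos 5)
Your proof is correct, and it takes a genuinely different and more economical route than the paper's. You exploit the biorthogonality $\langle \widetilde{g}_m, g_n\rangle = \delta_{mn}$ between a Riesz basis and its canonical dual and simply test the dextrodual expansion on $x = \widetilde{g}_m$, so that the series collapses to its single nonzero term $x_m$; your remark that the possibly conditional convergence is harmless here is exactly right, since the partial sums stabilize once they include the index $m$. The paper instead argues at the operator level: it uses surjectivity of the analysis operator of $\{g_n\}$ together with uniform boundedness to show that the synthesis operator of $\{x_n\}$ is bounded, takes adjoints to obtain the reciprocal expansion $f = \sum_n \langle f, x_n\rangle g_n$, and then invokes uniqueness of coefficients for the Riesz basis $\{g_n\}$ to identify $\{x_n\}$ with the canonical dual; it then needs a further step (injectivity of the synthesis operator of $\{x_n\}$) to conclude that $\{x_n\}$ is a Riesz basis. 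Your approach obtains the identification $x_n = \widetilde{g}_n$ in one move and inherits the Riesz basis property for free from the canonical dual, while the paper's approach has the side benefit of exhibiting the Bessel bound for $\{x_n\}$ and the reciprocal expansion explicitly along the way; both of those facts also follow immediately once $\{x_n\}$ is known to be the canonical dual, so nothing is lost by your shortcut.
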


\begin{proof}
Notice that the analysis operator associated with $\{g_{n}\}$ is surjective since the range of this operator is closed from $\{g_{n}\}$ being a frame, and it has dense range from its adjoint being injective.
Therefore, for any $\{c_{n}\}\in \ell^{2}(\mathbb{N})$,
$\sum_{n}c_{n}x_{n}$ converges so that the synthesis operator for $\{x_{n}\}$ is bounded from being a point-wise limit of bounded operators.
Then by an adjoint calculation, we get for all $f\in H$,
$$f=\sum_{n}\langle f,x_{n}\rangle g_{n}.$$

Let $\{h_{n}\}$ be the canonical dual frame of $\{g_{n}\}$, we have for all $f\in H$,
$$\sum_{n}\langle f,x_{n}-h_{n}\rangle g_{n}=0\implies \{x_{n}\}=\{h_{n}\}$$ since $\{g_{n}\}$ is a Riesz basis.

Finally, note that the synthesis operator for $\{x_{n}\}$ is the left inverse of the analysis operator for $\{g_{n}\}$. Then the synthesis operator for $\{x_{n}\}$ is injective since the analysis operator for $\{g_{n}\}$ is invertible, showing $\{x_{n}\}$ is a Riesz basis.
\end{proof}
The following Lemma is an easy observation discussed in \cite{Li2001Pseudo-duals}.
\begin{lem}\label{LFB}
If $\{g_{n}\}$ is a Bessel sequence in Hilbert space $H$ with bound $B$ that is dextrodual to $\{x_{n}\}$, then $\{x_{n}\}$ is a semi-lower frame with bound at least $\frac{1}{B}$.
\end{lem}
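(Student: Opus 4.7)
The plan is to leverage the dextrodual expansion to write $\|f\|^{2}$ as an inner-product sum involving both sequences, then use Cauchy–Schwarz together with the Bessel bound on $\{g_n\}$ to extract the desired lower bound on $\sum_n |\langle f, x_n\rangle|^{2}$.

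First, I would fix $f \in H$ and, using the dextrodual property, write
\begin{equation*}
\|f\|^{2} = \langle f, f\rangle = \Bigl\langle \sum_{n} \langle f, g_{n}\rangle x_{n},\, f \Bigr\rangle = \sum_{n} \langle f, g_{n}\rangle \,\overline{\langle f, x_{n}\rangle}.
\end{equation*}
The interchange of the sum with the inner product is justified by continuity of the inner product together with the norm convergence of the dextrodual expansion. Next, I would apply the Cauchy–Schwarz inequality for sums to obtain
\begin{equation*}
\|f\|^{4} \;\leq\; \Bigl(\sum_{n} |\langle f, g_{n}\rangle|^{2}\Bigr)\Bigl(\sum_{n} |\langle f, x_{n}\rangle|^{2}\Bigr) \;\leq\; B\|f\|^{2} \sum_{n} |\langle f, x_{n}\rangle|^{2},
\end{equation*}
where the second inequality uses the Bessel bound for $\{g_{n}\}$.

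Finally, I would divide through by $\|f\|^{2}$ (for $f \neq 0$; the inequality is trivial otherwise) to conclude
\begin{equation*}
\tfrac{1}{B}\|f\|^{2} \leq \sum_{n} |\langle f, x_{n}\rangle|^{2},
\end{equation*}
which is precisely the semi-lower frame condition with bound $\tfrac{1}{B}$. There is no genuine obstacle here; the only subtle point to verify is that the sum $\sum_{n} |\langle f, x_{n}\rangle|^{2}$ is meaningful, but this is either finite (in which case Cauchy–Schwarz gives the bound directly) or infinite (in which case the inequality holds trivially), so the argument goes through in either case.
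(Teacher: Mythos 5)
Your proof is correct and is the standard argument: expand $\|f\|^{2}$ via the dextrodual identity, then apply Cauchy--Schwarz and the Bessel bound. The paper itself omits the proof entirely (citing it as an easy observation from the Li--Ogawa reference), and your argument is exactly the intended one; the handling of the possibly conditional convergence via continuity of the inner product on partial sums is also sound.
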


\subsection{Known results}
Now we establish some known results that will prove useful in our discussion.
\begin{thm}[He, Xing-Gang and Lai, Chun-Kit and Lau, Ka-Sing]\label{notbessel}
If $\{e^{2\pi i \lambda x}\}_{\lambda \in \Lambda}$ is a frame in $L^{2}(\mu)$ for $\mu$ that is a singular Borel probability measure on $[0,1)$ that is not discrete, then $D^{-}(\Lambda)=0$ where $D^{-}$ denotes lower Beurling density. In particular, If $\mu$ is a singular Borel probability measure on $[0,1)$, then $\{e^{2\pi i nx}\}_{n=0}^{\infty}$ is not a Bessel sequence in $L^{2}(\mu)$.
\end{thm}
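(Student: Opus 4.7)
My plan is to handle the two assertions separately, since the ``in particular'' statement admits a quick direct proof, whereas the Beurling-density claim is a genuine Beurling--Landau-type result.

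For the ``in particular'' clause, I would test the purported Bessel inequality on the single vector $f \equiv 1 \in L^{2}(\mu)$, which has $\|1\|^{2}_{L^{2}(\mu)} = \mu([0,1)) = 1$. The inner products are $\langle 1, e^{2\pi i nx}\rangle = \overline{\hat\mu(n)}$, so a Bessel bound would force $\sum_{n=0}^{\infty} |\hat\mu(n)|^{2} < \infty$. Combined with $|\hat\mu(-n)| = |\hat\mu(n)|$ (since $\mu$ is a real measure), this places $\hat\mu$ in $\ell^{2}(\mathbb{Z})$, and by Plancherel on the torus any finite Borel measure whose Fourier coefficients lie in $\ell^{2}$ must be absolutely continuous with an $L^{2}$ density, contradicting singularity. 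This route bypasses the density claim entirely and even covers the purely discrete case.

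For the first (density) statement the plan is a proof by contradiction. Assume $D^{-}(\Lambda) = d > 0$. First I would record the translation invariance of the frame condition: since multiplication by $e^{-2\pi i t x}$ is unitary on $L^{2}(\mu)$, the shifted sequence $\{e^{2\pi i \lambda x}\}_{\lambda \in \Lambda - t}$ is a frame with the same bounds for every $t \in \mathbb{R}$, and $D^{-}(\Lambda - t) = d$. Next, using singularity and non-discreteness of $\mu$, by Lusin-type approximation I would produce closed sets $E_{n} \subset [0,1)$ with $\mu(E_{n}) \to 1$ and $m(E_{n}) \to 0$ (where $m$ denotes Lebesgue measure), and set $f_{n} = \chi_{E_{n}}/\sqrt{\mu(E_{n})}$. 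These are unit vectors in $L^{2}(\mu)$ whose associated complex measures $f_{n}\, d\mu$ have vanishing Lebesgue support, which is precisely the feature that absolutely continuous measures cannot exhibit.

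The goal is then to show $\sum_{\lambda \in \Lambda}|\widehat{f_{n}\mu}(\lambda)|^{2} \to 0$, contradicting the lower frame bound $A > 0$. The main obstacle, and the technical heart of the argument, is that $\widehat{f_{n}\mu}$ is an entire function of exponential type $2\pi$ but is not in $L^{2}(\mathbb{R})$, so Beurling--Landau sampling inequalities do not apply to it directly. The natural remedy is to convolve $f_{n}\, d\mu$ with a Schwartz mollifier $\varphi_{\delta}$ whose Fourier transform has small compact support near the origin, producing a bandlimited $L^{2}(\mathbb{R})$ function whose $L^{2}$-norm is controlled by a power of $m(E_{n})$. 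Beurling's sampling theorem, applicable precisely because $D^{-}(\Lambda) = d > 0$, then bounds the $\ell^{2}$-sum over $\Lambda$ of the sampled bandlimited function above by its $L^{2}(\mathbb{R})$-norm. Combining this with the translation invariance (to sweep the full spectrum of $\widehat{f_{n}\mu}$ rather than only a band near zero) and letting $\delta \to 0$ while tracking the Lebesgue shrinkage of $E_{n}$ is the delicate limiting step, and this interplay between singularity and positive spectral density is what ultimately forces the contradiction.
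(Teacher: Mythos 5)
The paper does not prove this theorem; it is quoted from He--Lai--Lau as a known result, so there is no in-paper argument to compare against and your proposal must be judged on its own. Your treatment of the ``in particular'' clause is correct and complete: testing the Bessel bound on $f\equiv 1$ forces $\sum_{n\ge 0}|\hat\mu(n)|^{2}<\infty$, the symmetry $\hat\mu(-n)=\overline{\hat\mu(n)}$ (valid since $\mu$ is a positive measure) upgrades this to $\{\hat\mu(n)\}\in\ell^{2}(\mathbb{Z})$, Riesz--Fischer produces $g\in L^{2}([0,1))$ with the same coefficients, and uniqueness of Fourier coefficients of finite Borel measures gives $\mu=g\,dm$, contradicting singularity of the nonzero measure $\mu$. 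That half is sound.

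The density half has a genuine gap, in fact two. First, the pivotal quantitative claim --- that $\varphi_{\delta}*(f_{n}\,d\mu)$ has $L^{2}(\mathbb{R})$-norm controlled by a power of $m(E_{n})$ --- is false: the $L^{2}$-norm of a mollified singular measure is governed by the mollification scale, not by the Lebesgue measure of the support. For instance $\|\varphi_{\delta}*\delta_{x_{0}}\|_{2}=\|\varphi_{\delta}\|_{2}\sim\delta^{-1/2}\to\infty$ while $m(\{x_{0}\})=0$; in general $\|\varphi_{\delta}*\nu\|_{2}^{2}=\iint(\varphi_{\delta}*\tilde\varphi_{\delta})(x-y)\,d\nu(x)\,d\nu(y)$ diverges as $\delta\to 0$ precisely because $\nu=f_{n}\,d\mu$ is singular, so there is no smallness to feed into the limiting step. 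Second, the sampling inequality you invoke points the wrong way: Beurling's theorem under a hypothesis $D^{-}(\Lambda)>0$ yields a \emph{lower} bound $\|F\|_{2}^{2}\lesssim\sum_{\lambda}|F(\lambda)|^{2}$ for bandlimited $F$, whereas the \emph{upper} bound $\sum_{\lambda}|F(\lambda)|^{2}\lesssim\|F\|_{2}^{2}$ you need is of Plancherel--P\'olya type and requires $\Lambda$ to be uniformly discrete (finite upper density), which is not among your hypotheses. More structurally, you aim to violate the lower frame bound, while the natural (and, in He--Lai--Lau, the actual) mechanism violates the upper one: test the Bessel inequality on $\chi_{I}/\sqrt{\mu(I)}$ for shrinking intervals $I$ centered at a point where $\mu(I)/|I|\to\infty$ (which holds $\mu$-a.e.\ for singular $\mu$); for $|\lambda|\le c/|I|$ one has $|\widehat{\chi_{I}\mu}(\lambda)|\ge\tfrac{1}{2}\mu(I)$, and $D^{-}(\Lambda)=d>0$ supplies on the order of $d/|I|$ such $\lambda$, so $\sum_{\lambda}|\langle \chi_{I}/\sqrt{\mu(I)},e^{2\pi i\lambda x}\rangle|^{2}\gtrsim d\,\mu(I)/|I|\to\infty$, contradicting the upper frame bound. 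I would rebuild the density argument along those lines rather than through mollification and sampling theorems.
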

The following was proven in \cite{Herr2017Fourier} but was inspired by the work of Kwapien and Mycielski \cite{Kwapien2001Kaczmarz}.
\begin{thm}[Herr and Weber]\label{singular}
    If $\mu$ is a singular Borel probability measure on $[0,1)$, then there is a Parseval frame $\{g_{n}\}_{n=0}^{\infty}$ that is dextrodual to $\{e^{2\pi i nx}\}_{n=0}^{\infty}$.
\end{thm}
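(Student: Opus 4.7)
The plan is to invoke the Kaczmarz algorithm of Kwapień and Mycielski, applied to the stationary sequence of exponentials $\phi_n(x) = e^{2\pi inx}$ inside $L^2(\mu)$. First I would observe that the inner products
$$\langle \phi_n, \phi_m \rangle_{L^2(\mu)} = \int_0^1 e^{2\pi i(n-m)x}\,d\mu(x) = \widehat{\mu}(m-n)$$
depend only on the difference $n-m$, so $\{\phi_n\}_{n\ge 0}$ is a (one-sided) stationary sequence, with spectral measure precisely $\mu$ (up to reflection). Consequently the problem fits exactly into the Kwapień–Mycielski framework.

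Next I would define the auxiliary sequence $\{g_n\}_{n\ge 0}$ by the Kaczmarz recursion: set $g_0 = \phi_0 = \mathbf{1}$, and having chosen $g_0,\dots,g_{n-1}$, let $P_{n-1}$ be the orthogonal projection of $L^2(\mu)$ onto $\overline{\mathrm{span}}\{\phi_0,\dots,\phi_{n-1}\}^\perp$ and define $g_n = P_{n-1}\phi_n$. Equivalently, the partial reconstruction $S_n f := \sum_{k=0}^{n}\langle f, g_k\rangle \phi_k$ is the Kaczmarz iterate obtained by successively projecting onto the affine hyperplanes $\{h : \langle h,\phi_k\rangle = \langle f,\phi_k\rangle\}$. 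A direct induction shows the telescoping identity $f - S_n f \perp \phi_0,\dots,\phi_n$ and that $\|f - S_n f\|^2 = \|f\|^2 - \sum_{k=0}^{n}|\langle f,g_k\rangle|^2$. In particular, the partial sums automatically satisfy the Bessel-type inequality $\sum_{k}|\langle f,g_k\rangle|^2 \le \|f\|^2$, with equality precisely when $S_n f \to f$ in $L^2(\mu)$. Hence dextroduality and the Parseval property are \emph{equivalent} and reduce to proving $S_n f \to f$ for every $f$.

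The final — and main — step is to exploit singularity to establish $S_n f \to f$. By the Kwapień–Mycielski theorem, a stationary sequence with spectral measure $\mu$ is ``effective'' (meaning the Kaczmarz iteration converges for every $f$) if and only if $\mu$ is singular with respect to Lebesgue measure. The proof of that equivalence uses the Szegő–Kolmogorov theorem: the $L^2(\mu)$ distance from $\phi_n$ to $\overline{\mathrm{span}}\{\phi_0,\dots,\phi_{n-1}\}$ tends to $0$ as $n\to\infty$ exactly when the geometric mean of the Radon–Nikodym density of $\mu$ vanishes, which happens iff $\mu$ is singular. I would reproduce (or cite) this classical equivalence, applied to our $\mu$, to conclude that the remainder $f - S_n f$ shrinks in norm to $0$ on the dense set $\mathrm{span}\{\phi_k\}$, and then extend by a density argument using the uniform Bessel bound.

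The main obstacle is this last step: without the full Kwapień–Mycielski / Szegő machinery, one must show that singularity of $\mu$ forces the exponentials to be ``asymptotically orthogonal'' in the sense that $\mathrm{dist}(\phi_n, \mathrm{span}\{\phi_0,\dots,\phi_{n-1}\}) \to 0$, so that the Kaczmarz projections sweep out all of $L^2(\mu)$. Everything else — the definition of $\{g_n\}$, the telescoping Parseval identity, and the equivalence between dextroduality and norm convergence of the Kaczmarz iterates — is essentially a formal computation once the recursion is set up.
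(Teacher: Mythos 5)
Your overall strategy --- stationarity of $\{e^{2\pi i nx}\}_{n\geq 0}$ in $L^{2}(\mu)$ with spectral measure $\mu$, the Kaczmarz auxiliary sequence, the Haller--Szwarc equivalence between effectiveness and the Parseval property, and the Kwapie\'n--Mycielski criterion --- is exactly the route of \cite{Herr2017Fourier}, which is what the paper cites for this theorem and summarizes in its Section 4.1. However, there is a genuine gap: Theorem \ref{KM} applies to a stationary sequence of \emph{complete} unit vectors, and you never establish that $\{e^{2\pi i nx}\}_{n\geq 0}$ is complete in $L^{2}(\mu)$; you simply refer to ``the dense set $\mathrm{span}\{\phi_{k}\}$.'' This is not automatic --- for Lebesgue measure the nonnegative-frequency exponentials are far from complete --- and it is precisely here that singularity must be used a second time, via the F.~and M.~Riesz theorem: if $\langle f, e^{2\pi i nx}\rangle_{\mu}=0$ for all $n\geq 0$, then $f\,d\mu$ is absolutely continuous, hence zero because $\mu$ is singular. (The paper is explicitly conscious of this point: its real-line analogue in Section 4 invokes Forelli's extension of F.~and M.~Riesz exactly to obtain completeness, and remarks on the ``completeness issue.'') Without this step the Kaczmarz iterates converge only to the projection of $f$ onto $\overline{\mathrm{span}}\{\phi_{n}\}$ and the claimed expansion fails.

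A second, internal, problem: your identification $g_{n}=P_{n-1}\phi_{n}$ and the claim $f-S_{n}f\perp\phi_{0},\dots,\phi_{n}$ are both false. The Kaczmarz auxiliary sequence is not the Gram--Schmidt orthogonalization; the residual $f-S_{n}f$ is orthogonal only to $\phi_{n}$ (already $\langle g_{2},\phi_{1}\rangle\neq 0$ in general). If your orthogonality claim were correct, $S_{n}f$ would be the orthogonal projection of $f$ onto $\mathrm{span}\{\phi_{0},\dots,\phi_{n}\}$ and the theorem would hold for \emph{every} measure with complete exponentials, with no role for singularity --- contradicting Kwapie\'n--Mycielski itself. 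The part you actually need from that paragraph, namely the telescoping identity $\|f-S_{n}f\|^{2}=\|f\|^{2}-\sum_{k=0}^{n}|\langle f,g_{k}\rangle|^{2}$ and the consequent equivalence of effectiveness with the Parseval property, is correct (this is the Haller--Szwarc observation); the surrounding orthogonality claims should be removed. Your parenthetical that the Szeg\H{o} geometric mean vanishes ``iff $\mu$ is singular'' is also inaccurate --- it vanishes iff $\int\log w\,dx=-\infty$, which holds for many absolutely continuous measures --- but since you cite Kwapie\'n--Mycielski rather than reprove it, that slip is not load-bearing.
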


These two theorems show that having a dextrodual sequence is strictly weaker than being a frame. 
We also know exactly when $\{e^{2\pi i nx}\}_{n=-\infty}^{\infty}$ forms a frame for $L^{2}(\mu)$ when $\mu$ is an absolutely continuous Borel probability measure on $[0,1)$ by the following:
\begin{thm}[Lai, Chun-Kit]\label{absc}
    If $\mu$ is an absolutely continuous Borel probability measure on $[0,1)$, then $\{e^{2\pi i nx}\}_{n=-\infty}^{\infty}$ is a frame for $L^{2}(\mu)$ if and only if the Radon-Nykodym derivative of $\mu$ is bounded above and below on its support.
\end{thm}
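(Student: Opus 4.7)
The plan is to diagonalize the problem via the Radon-Nikodym density $h := d\mu/dx$. Let $E := \{h > 0\}$, which coincides with the essential support of $\mu$ up to Lebesgue null sets. The central observation is that for any $f \in L^{2}(\mu)$ with $fh \in L^{2}([0,1), dx)$,
\begin{equation*}
\langle f, e^{2\pi i n \cdot}\rangle_{L^{2}(\mu)} \;=\; \int_{0}^{1} f(x)\, e^{-2\pi i nx}\, h(x)\,dx \;=\; \widehat{fh}(n),
\end{equation*}
so by classical Parseval on $L^{2}([0,1), dx)$,
\begin{equation*}
\sum_{n \in \mathbb{Z}} \bigl|\langle f, e^{2\pi i n\cdot}\rangle_{L^{2}(\mu)}\bigr|^{2} \;=\; \int_{0}^{1} |f|^{2} h^{2}\, dx.
\end{equation*}
The frame inequality thus reduces to a pointwise comparison between $\int |f|^{2} h\,dx$ and $\int |f|^{2} h^{2}\,dx$, i.e., to the pointwise size of $h$ on $E$.

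For the sufficient direction, assume $a \leq h \leq b$ on $E$. Then $ah \leq h^{2} \leq bh$ pointwise on $[0,1)$ (trivially where $h = 0$), and for $f \in L^{2}(\mu)$ one has $\int |fh|^{2}\, dx \leq b\|f\|_{L^{2}(\mu)}^{2} < \infty$, so the Parseval identity applies and integrating the pointwise bounds against $|f|^{2}$ gives the two frame inequalities with bounds $a$ and $b$.

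For the necessary direction, suppose $\{e^{2\pi i nx}\}_{n\in\mathbb{Z}}$ is a frame with bounds $A,B$. Test on $f = \mathbf{1}_{F}$ for a Borel set $F \subseteq E$: the Bessel bound forces the Fourier coefficients of $h\mathbf{1}_{F} \in L^{1}(dx)$ to lie in $\ell^{2}$, so by Plancherel $h\mathbf{1}_{F} \in L^{2}(dx)$ and the identity above is justified. The frame inequality yields
\begin{equation*}
A \int_{F} h\,dx \;\leq\; \int_{F} h^{2}\,dx \;\leq\; B \int_{F} h\,dx
\end{equation*}
for every Borel $F \subseteq E$. Applying this to $F_{\epsilon} := \{h < A - \epsilon\} \cap E$ gives $\epsilon \int_{F_{\epsilon}} h\,dx \leq 0$, so $F_{\epsilon}$ is Lebesgue null; the symmetric argument with $\{h > B + \epsilon\} \cap E$ handles the upper end. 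Letting $\epsilon \downarrow 0$ along the rationals yields $A \leq h \leq B$ a.e.\ on $E$.

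The main obstacle is purely a bookkeeping subtlety about the support: $L^{2}(\mu)$ is insensitive to modifications on $\{h = 0\}$, so the theorem can only constrain $h$ on $E$, and conversely the hypothesis must be posed only on $E$. A secondary technical point is justifying the Parseval identity when $h$ is unbounded (as can happen in the necessary direction before one knows $h \leq B$); this is handled by applying Plancherel to the $L^{1}$ function $h\mathbf{1}_{F}$, whose $\ell^{2}$ Fourier coefficients (guaranteed by the Bessel bound) force it into $L^{2}(dx)$.
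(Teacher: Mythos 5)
The paper does not prove this statement; it is quoted as a known result and attributed to Lai \cite{Lai2011Fourier}, so there is no in-paper argument to compare against. Your proof is correct and self-contained for the case $\Lambda=\mathbb{Z}$: the identity $\sum_{n}|\langle f,e^{2\pi i n\cdot}\rangle_{L^{2}(\mu)}|^{2}=\int|f|^{2}h^{2}\,dx$ reduces the frame inequalities to the pointwise comparison $Ah\leq h^{2}\leq Bh$ on $\{h>0\}$, and your level-set argument with $F_{\epsilon}$ extracts exactly that. You also handle the one genuinely delicate point in the necessity direction correctly: before knowing $h$ is bounded one cannot invoke Parseval directly for $h\mathbf{1}_{F}$, and your route through the $L^{1}$ uniqueness theorem (an $L^{1}$ function whose Fourier coefficients agree with those of an $L^{2}$ function must equal it a.e.) closes that gap. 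It is worth noting that Lai's cited theorem is substantially more general, treating frames $\{e^{2\pi i\lambda x}\}_{\lambda\in\Lambda}$ for arbitrary frequency sets $\Lambda$, where Beurling density and covering arguments are needed; your argument exploits the orthonormality of $\{e^{2\pi inx}\}$ in $L^{2}([0,1),dx)$ and so does not extend to that setting, but it fully proves the statement as quoted in the paper.
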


\section{Measures on the torus with frame-like Fourier expansions }
Our goal is to completely characterize measures $\mu$ that have the property from equation (\ref{prp}). In particular, we prove the existence of a class of measures on the torus that are not of pure type that possess this property in this section.

\subsection{Properties of measures with frame-like Fourier expansions}
However, we did not achieve a complete characterization of these measures. Instead,
we begin this section with presenting some properties of finite Borel measures $\mu$ on $[0,1)$ where $\{e^{2\pi i nx}\}_{n=-\infty}^{\infty}$ has a dextrodual sequence $\{g_{n}\}$ in $L^{2}(\mu)$.
The first thing one can notice is that if a measure has this property, then its singular and absolutely continuous part have it as well.

\begin{lem}\label{bothparts}
    If $\mu=\mu_{a}+\mu_{s}$ is a finite Borel measure on $[0,1)$ where $\mu_{a}$ is absolutely continuous with Radon-Nikodym  derivative $g$ and $\mu_{s}$ is singular such that there is sequence $\{g_{n}\}$ that is dextrodual to $\{e^{2\pi i nx}\}_{n=-\infty}^{\infty}$ in $L^{2}(\mu)$, then $\{g_{n}\}$ is dextrodual to $\{e^{2\pi i nx}\}_{n=-\infty}^{\infty}$ in $L^{2}(\mu_{s})$ and $\{g_{n}\}$ is dextrodual to $\{e^{2\pi i nx}\}_{n=-\infty}^{\infty}$ in $L^{2}(\mu_{a})$.

Furthermore, if $\{g_{n}\}$ is a semi-lower frame or Bessel sequence for $L^{2}(\mu)$, then it is also a semi-lower frame or Bessel sequence for $L^{2}(\mu_{s})$ and $L^{2}(\mu_{a})$.
\end{lem}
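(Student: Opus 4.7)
The plan is to exploit the orthogonal decomposition $L^2(\mu) = L^2(\mu_a) \oplus L^2(\mu_s)$ coming from the Lebesgue decomposition. By mutual singularity of $\mu_a$ and $\mu_s$, choose disjoint Borel sets $E, E^c \subseteq [0,1)$ with $\mu_a(E^c) = 0 = \mu_s(E)$. Any $h \in L^2(\mu)$ then splits as $h \cdot \mathbf{1}_E + h \cdot \mathbf{1}_{E^c}$, with
$$\|h\|_{L^2(\mu)}^2 = \|h \cdot \mathbf{1}_E\|_{L^2(\mu_a)}^2 + \|h \cdot \mathbf{1}_{E^c}\|_{L^2(\mu_s)}^2.$$
Note also that each $g_n \in L^2(\mu)$ automatically lies in both $L^2(\mu_s)$ and $L^2(\mu_a)$ because $\mu_s, \mu_a \leq \mu$, so the inner products appearing in the conclusion are well-defined.

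Given $f \in L^2(\mu_s)$, I would set $\tilde{f} := f \cdot \mathbf{1}_{E^c}$, which lies in $L^2(\mu)$ with $\|\tilde{f}\|_{L^2(\mu)} = \|f\|_{L^2(\mu_s)}$. Splitting the defining integral of $\langle \tilde{f}, g_n\rangle_{L^2(\mu)}$ over $E$ and $E^c$ and using that $\tilde{f}$ vanishes on $E$ (where $\mu_a$ is concentrated), one obtains $\langle \tilde{f}, g_n \rangle_{L^2(\mu)} = \langle f, g_n \rangle_{L^2(\mu_s)}$. Applying the dextrodual hypothesis to $\tilde{f}$ in $L^2(\mu)$ shows that the partial sums $S_N$ of $\sum_n \langle f, g_n \rangle_{L^2(\mu_s)} e^{2\pi inx}$ converge to $\tilde{f}$ in $L^2(\mu)$. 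By the orthogonal splitting above, $\|\tilde{f} - S_N\|_{L^2(\mu_a)}^2$ and $\|f - S_N\|_{L^2(\mu_s)}^2$ both tend to zero, giving
$$f = \sum_n \langle f, g_n \rangle_{L^2(\mu_s)} e^{2\pi inx}$$
in $L^2(\mu_s)$. The argument for $L^2(\mu_a)$ is identical after interchanging the roles of $E$ and $E^c$.

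For the Bessel and semi-lower frame assertions, I would substitute $\tilde{f}$ into the corresponding $L^2(\mu)$ inequality. Since $\|\tilde{f}\|_{L^2(\mu)} = \|f\|_{L^2(\mu_s)}$ and the inner products coincide as above, the upper Bessel bound $B$ and the lower semi-frame bound $A$ transfer verbatim to $L^2(\mu_s)$, and the same substitution with roles swapped handles $L^2(\mu_a)$.

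I do not foresee any serious obstacle here; the essential content is really just the orthogonal splitting of $L^2(\mu)$ provided by mutual singularity, plus the observation that zero-extension from $L^2(\mu_s)$ (or $L^2(\mu_a)$) into $L^2(\mu)$ is an isometry that preserves the relevant inner products against $g_n$.
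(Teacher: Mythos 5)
Your proposal is correct and follows essentially the same route as the paper: both arguments extend $f$ by zero across the mutually singular supports (your $E^c$ is the paper's Lebesgue-null set $S$ carrying $\mu_s$), observe that this extension is an isometry preserving the inner products against $g_n$, and then push the $L^2(\mu)$ convergence and frame bounds down to $L^2(\mu_s)$ and $L^2(\mu_a)$. No gaps.
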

\begin{proof}
Let $f\in L^{2}(\mu_{s})$ and $S$ be a Borel set of Lebesgue measure zero where $\mu_{s}(S^{c})=0$.
Define $\tilde{f}\in L^{2}(\mu)$ to be equal to $f$ in $S$ $\mu_{s}$ almost everywhere and equal to $0$ in $S^{c}$ almost everywhere with respect to Lebesgue measure.
Then
$$\tilde{f}=\sum_{n}\langle \tilde{f},g_{n}\rangle_{\mu}e^{2\pi i nx}=\sum_{n}\langle f,g_{n}\rangle_{\mu_{s}}e^{2\pi i nx}$$
where the convergence in is $L^{2}(\mu)$, so it is also in $L^{2}(\mu_{s})$.

Showing $\{g_{n}\}$ has semi-lower or Bessel bounds in $L^{2}(\mu_{s})$ is simple since if $\{g_{n}\}$ has semi-lower or Bessel bounds in $L^{2}(\mu)$,
$$||f||_{L^{2}(\mu_{s})}^{2}=||\tilde{f}||_{L^{2}(\mu)}^{2}\approx ||\{\langle \tilde{f},g_{n}\rangle_{\mu}\}||_{\ell^{2}}^{2}=||\{\langle f,g_{n}\rangle_{\mu_{s}}\}||_{\ell^{2}}^{2}$$

    The absolutely continuous case is similar.
\end{proof}
Now we will see that if the absolutely continuous part of $\mu$ is equivalent to Lebesgue measure, then there cannot be a positive singular part of $\mu$ and have $\mu$ with this dextrodual property from equation (\ref{prp}).
\begin{prop}\label{Lebesguepart}
    Let $\mu=\mu_{a}+\mu_{s}$ where $\mu_{s}$ is a non-zero finite singular Borel measure on $[0,1)$ and $\mu_{a}$ is an absolutely continuous Borel measure on $[0,1)$ with Radon-Nikodym derivative that is bounded above and below. For $S$ that is a Borel set of Lebesgue measure zero where $\mu_{s}(S^{c})=0$, there is no $\{c_{n}\}_{n=-\infty}^{\infty}$ such that
    $$\sum_{n}c_{n}e^{2\pi i nx}=\chi_{S}$$ with convergence in $L^{2}(\mu)$.
\end{prop}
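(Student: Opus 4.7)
Suppose, toward a contradiction, that there exists $\{c_n\}_{n=-\infty}^{\infty}$ with $T_N := \sum_{|n|\le N} c_n e^{2\pi i nx} \to \chi_S$ in $L^{2}(\mu)$. The first move is to exploit the orthogonal decomposition $L^{2}(\mu)=L^{2}(\mu_{a})\oplus L^{2}(\mu_{s})$, which holds because $\mu_{a}\perp\mu_{s}$ (via the Lebesgue null set $S$ with $\mu_{a}(S)=0$ and $\mu_{s}(S^{c})=0$). In this decomposition, $\chi_{S}$ corresponds to $(0,\mathbf{1})$, so the hypothesis splits into two assertions:
\begin{equation*}
T_N \longrightarrow 0 \text{ in } L^{2}(\mu_{a}) \quad\text{and}\quad T_N \longrightarrow \mathbf{1} \text{ in } L^{2}(\mu_{s}).
\end{equation*}

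Next, I would upgrade the first assertion to ordinary $L^{2}(dx)$-convergence. Because the Radon--Nikodym derivative of $\mu_{a}$ is bounded both above and below (by positive constants $c,C$), the norms $\|\cdot\|_{L^{2}(\mu_{a})}$ and $\|\cdot\|_{L^{2}(dx)}$ are equivalent. Hence $\{T_N\}$ is Cauchy in $L^{2}(dx)$, and since $\{e^{2\pi i nx}\}_{n\in\mathbb{Z}}$ is an orthonormal basis for $L^{2}(dx)$, Parseval forces $\{c_n\}\in\ell^{2}$ and produces a genuine $L^{2}(dx)$-limit $h=\sum_{n} c_n e^{2\pi inx}$.

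Now the two convergences can be compared. On the one hand, by norm equivalence $T_N \to h$ in $L^{2}(\mu_{a})$ as well; on the other hand, $T_N\to 0$ in $L^{2}(\mu_{a})$. Therefore $h=0$ in $L^{2}(\mu_{a})$, and since the density is bounded below on $[0,1)$, we get $h=0$ Lebesgue-a.e., whence $c_n=0$ for every $n\in\mathbb{Z}$. Consequently $T_N\equiv 0$, but the second assertion says $T_N\to\mathbf{1}$ in $L^{2}(\mu_{s})$, so $\|\mathbf{1}\|_{L^{2}(\mu_{s})}^{2}=\mu_{s}([0,1))=0$, contradicting the assumption that $\mu_{s}$ is non-zero.

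The only real subtlety is the passage from $L^{2}(\mu)$-convergence to $\ell^{2}$-membership of $\{c_n\}$: a priori the coefficients need not be square-summable, and if one only knew that the derivative were bounded below on its support (rather than on all of $[0,1)$), one would merely obtain $L^{2}(E,dx)$-Cauchyness on $E=\operatorname{supp}\mu_{a}$ and could not conclude $\{c_n\}\in\ell^{2}$. The hypothesis as stated---derivative bounded above and below on $[0,1)$---is precisely what eliminates this obstacle and reduces everything to the orthonormality of the Fourier basis in Lebesgue $L^{2}$.
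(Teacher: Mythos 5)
Your proof is correct and follows essentially the same route as the paper's: both arguments use the equivalence of the $L^{2}(\mu_{a})$ and $L^{2}(dx)$ norms (from the two-sided bound on the density) to force $\{c_{n}\}=0$, and then derive a contradiction from the non-vanishing singular part. Your version simply spells out the details (the orthogonal splitting, the $\ell^{2}$-membership of the coefficients, and the role of the lower bound on all of $[0,1)$) that the paper leaves implicit.
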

\begin{proof}
    If $\chi_{S}$ could be expressed as a Fourier series in this manner, the convergence must also be in $L^{2}([0,1))$ since convergence in $L^{2}(\mu_{a})$ is equivalent to convergence in $L^{2}([0,1))$. Then $\{c_{n}\}=0$, which is a contraction.
\end{proof}
We can now classify measures with Radon-Nikodym derivative bounded below that have the property from equation (\ref{prp}) in the following way:
\begin{thm}
    Let $\mu$ be a finite Borel measure on $[0,1)$ with Radon-Nikodym derivative $g$ that is bounded below. Suppose that $\{g_{n}\}$ is a semi-lower frame that is dextrodual to $\{e^{2\pi i nx}\}_{n=-\infty}^{\infty}$ in $L^{2}(\mu)$. Then $g$ is bounded, and therefore, $\mu$ is absolutely continuous. 
\end{thm}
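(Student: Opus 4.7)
The plan is to pass to the absolutely continuous part, use the lower bound on $g$ to embed $L^{2}(\mu_a)$ continuously into $L^{2}([0,1))$, and then extract an upper bound on $g$ from the semi-lower frame inequality together with Parseval in $L^{2}([0,1))$.

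First, by Lemma \ref{bothparts}, $\{g_n\}$ remains a semi-lower frame dextrodual to $\{e^{2\pi i n x}\}$ in $L^{2}(\mu_a)$. Writing $c>0$ for a lower bound of $g$, the embedding $L^{2}(\mu_a)\hookrightarrow L^{2}([0,1))$ is continuous with $\|f\|_{L^{2}([0,1))}\leq c^{-1/2}\|f\|_{L^{2}(\mu_a)}$, so the partial sums of the dextrodual expansion of any $f\in L^{2}(\mu_a)$ converge to $f$ in $L^{2}([0,1))$ as well. Pairing each partial sum against the orthonormal basis element $e^{2\pi i k x}$ in $L^{2}([0,1))$ and passing to the limit identifies the dextrodual coefficients with the standard Fourier coefficients: once $k$ has been included in a partial sum the inner product equals $\langle f, g_k\rangle_{\mu_a}$, while norm convergence in $L^{2}([0,1))$ forces this value to equal $\widehat{f}(k)$. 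Hence
\[
\langle f, g_n\rangle_{\mu_a} \;=\; \widehat{f}(n) \;=\; \int_{0}^{1} f(x)\,e^{-2\pi i n x}\,dx,\qquad f\in L^{2}(\mu_a),\ n\in\mathbb{Z}.
\]

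With this identification in hand, Parseval in $L^{2}([0,1))$ gives $\sum_n|\langle f, g_n\rangle_{\mu_a}|^2 = \|f\|_{L^{2}([0,1))}^2$, so the semi-lower bound in $L^{2}(\mu_a)$ with constant $A>0$ becomes $A\int |f|^2 g\,dx \leq \int |f|^2\,dx$ for every $f\in L^{2}(\mu_a)$. Taking $f=\chi_E$ for arbitrary Borel $E\subseteq[0,1)$ gives $A\int_E g\,dx\leq|E|$, hence $g\leq 1/A$ almost everywhere, so $g$ is bounded.

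Finally, with $g$ bounded both above and below, Proposition \ref{Lebesguepart} prohibits writing $\chi_S$ (for any Lebesgue-null Borel support $S$ of $\mu_s$) as $\sum_n c_n e^{2\pi i n x}$ with convergence in $L^{2}(\mu)$ whenever $\mu_s\neq 0$. Yet applying the dextrodual hypothesis to $\chi_S\in L^{2}(\mu)$ produces exactly such an expansion, so $\mu_s=0$ and $\mu=\mu_a$ is absolutely continuous. The delicate step requiring care is the coefficient identification $\langle f,g_n\rangle_{\mu_a}=\widehat f(n)$ above, since the dextrodual sum is allowed to converge only conditionally; the remainder of the argument is essentially bookkeeping.
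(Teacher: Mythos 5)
Your proof is correct and follows essentially the same route as the paper: reduce to $\mu_a$ via Lemma \ref{bothparts}, use the lower bound on $g$ to identify $\langle f,g_n\rangle_{\mu_a}$ with the ordinary Fourier coefficients $\widehat{f}(n)$, combine the semi-lower frame bound with Parseval to bound $g$ above, and invoke Proposition \ref{Lebesguepart} to kill the singular part. The only differences are cosmetic: the paper identifies the coefficients via the orthonormal basis $\{e^{2\pi i nx}/\sqrt{g}\}$ of $L^{2}(\mu_a)$ and proves boundedness by contradiction on sets where $g>n$, whereas you use the embedding into $L^{2}([0,1))$ and obtain the cleaner explicit bound $g\leq 1/A$.
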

\begin{proof}
By Lemma \ref{bothparts}, $\{g_{n}\}$ is a semi-lower frame and dextrodual to $\{e^{2\pi i nx}\}_{n=-\infty}^{\infty}$ in $L^{2}(\mu_{a})$ where $\mu_{a}$ is the absolutely continuous part of $\mu$.
    Now for all $f\in L^{2}(\mu_{a}) $,
    $$\frac{f}{\sqrt{g}}=\sum_{n}\langle f, g_{n}\rangle \frac{1}{\sqrt{g}}e^{2\pi i nx},$$
    which follows since $\frac{1}{\sqrt{g}}$ is bounded.
    Also, it is easy to see that $\{\frac{e^{2\pi i nx}}{\sqrt{g}}\}_{n=-\infty}^{\infty}$ is an orthonormal basis of $L^{2}(\mu_{a})$. Then we must have by uniqueness of coefficients from an orthonormal basis,
    $$\langle f, g_{n}\rangle =\langle \frac{f}{\sqrt{g}},\frac{e^{2\pi i nx}}{\sqrt{g}}\rangle$$ so that
    $$g_{n}=\frac{e^{2\pi i nx}}{g}$$ for all $n$.
    
    Now if we assume that $g$ is unbounded, we show $\{g_{n}\}$ can't be a semi-lower frame for $L^{2}(\mu_{a})$. For each $n>0$, find Borel set $E_{n}$ of positive Lebesgue measure where $g>n$ on $E_{n}$.
    We have $$||\chi_{E_{n}}||_{\mu_{a}}^{2}\geq n|E_{n}|$$ while
    $$\sum_{k}|\langle \chi_{E_{n}}, g_{k}\rangle_{\mu_{a}} |^{2}=|E_{n}|.$$ There can't be $A>0$ such that
    $$An\leq 1$$ for all $n$, so $\{g_{n}\}$ can't be a semi lower frame, and $g$ is therefore bounded. By Proposition \ref{Lebesguepart}, $\mu$ must be absolutely continuous.
\end{proof}
We can also classify measures who have dextroduals to the exponential functions that are Riesz bases:

\begin{thm}
Let $\mu$ be a finite Borel measure on $[0,1)$. Then $\{e^{2\pi inx}\}_{n=-\infty}^{\infty}$ has a dextrodual sequence that is a Riesz basis if and only if $\mu$ is absolutely continuous with Radon-Nikodym derivative bounded above and below.
\end{thm}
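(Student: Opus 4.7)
The plan is to combine Lemma \ref{RB}, Theorem \ref{notbessel}, and Theorem \ref{absc} for the forward direction, and to exhibit the explicit sequence $\{e^{2\pi i nx}/g\}$ for the reverse direction, where $g$ is the Radon--Nikodym derivative.

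For the forward direction, suppose $\{g_n\}$ is a Riesz basis in $L^2(\mu)$ that is dextrodual to $\{e^{2\pi i nx}\}_{n \in \mathbb{Z}}$. By Lemma \ref{RB} applied with $\{g_n\}$ as the Riesz basis and the exponentials as $\{x_n\}$, the sequence $\{e^{2\pi i nx}\}$ is the canonical dual of $\{g_n\}$ and is itself a Riesz basis of $L^2(\mu)$; in particular it is a Bessel sequence with some bound $B$. Writing $\mu = \mu_a + \mu_s$, I will transfer this Bessel bound to $L^2(\mu_s)$ by the extension-by-zero technique already used in the proof of Lemma \ref{bothparts}: for $f \in L^2(\mu_s)$, extend by zero off the singular support to obtain $\tilde f \in L^2(\mu)$ with $\|\tilde f\|_\mu = \|f\|_{\mu_s}$ and $\langle \tilde f, e^{2\pi i nx}\rangle_\mu = \langle f, e^{2\pi i nx}\rangle_{\mu_s}$. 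If $\mu_s$ were nonzero, then after normalizing to a singular Borel probability measure the one-sided subsequence $\{e^{2\pi i nx}\}_{n\geq 0}$ would inherit the Bessel property in $L^2(\mu_s)$, contradicting Theorem \ref{notbessel}. Hence $\mu = \mu_a$, and since $\{e^{2\pi i nx}\}$ is then a frame for $L^2(\mu_a)$, Theorem \ref{absc} forces the derivative $g = d\mu/dx$ to be bounded above and below.

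For the reverse direction, let $0 < c \leq g \leq C < \infty$ and set $g_n := e^{2\pi i nx}/g \in L^2(\mu)$. A direct calculation gives
\[
\langle f, g_n \rangle_\mu = \int_0^1 f(x)\, e^{-2\pi i nx}\, dx,
\]
the standard Fourier coefficient of $f$. Because the norms on $L^2(dx)$ and $L^2(\mu)$ are equivalent under the two-sided bound on $g$, the classical Fourier expansion $f = \sum_n \langle f, g_n\rangle_\mu e^{2\pi i nx}$ converges in $L^2(dx)$ and therefore in $L^2(\mu)$, establishing dextroduality. The Riesz basis property of $\{g_n\}$ then follows by transport: $\{e^{2\pi i nx}\}$ is a Riesz basis of $L^2(\mu)$ (it is an ONB of $L^2(dx)$ and the two norms are equivalent), and multiplication by $1/g$ is a bounded invertible operator on $L^2(\mu)$, so it sends the Riesz basis $\{e^{2\pi i nx}\}$ to the Riesz basis $\{g_n\}$.

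The main obstacle is the forward implication's reduction to the singular part. The subtlety is that a Riesz basis in $L^2(\mu)$ need not restrict to a Riesz basis in $L^2(\mu_s)$, so one cannot directly pass the Riesz basis structure of $\{g_n\}$ to the pieces; the argument must instead route through the Bessel bound for the exponentials, which does transfer cleanly via extension by zero. Once $L^2(\mu_s)$ inherits that Bessel bound, Theorem \ref{notbessel} delivers the contradiction that eliminates the singular part, and Theorem \ref{absc} finishes the proof.
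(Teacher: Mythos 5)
Your argument follows the paper's route almost step for step: Lemma \ref{RB} to upgrade the exponentials themselves to a Riesz basis, extension by zero to push the Bessel bound onto the singular part, Theorem \ref{notbessel} to eliminate that part, and Theorem \ref{absc} to bound the density; your converse via the explicit dual $\{e^{2\pi i nx}/g\}$ is a correct and slightly more concrete version of the paper's one-line appeal to the canonical dual frame. All of those steps are sound.

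There is, however, one genuine gap in the forward direction: Theorem \ref{absc} only gives that $g$ is bounded above and below \emph{on its support}, and you stop there. The theorem you are proving asserts (and your own converse uses) that $g$ is bounded below on all of $[0,1)$. This is not a cosmetic distinction: if $g$ were permitted to vanish on a set $S$ of positive Lebesgue measure, then $L^{2}(\mu)\cong \chi_{S^{c}}L^{2}([0,1))$ and the exponentials form an overcomplete frame there rather than a Riesz basis, so by Lemma \ref{RB} no Riesz-basis dextrodual could exist; the biconditional would fail under the ``on its support'' reading. So the forward direction must also rule out $|S|>0$, and you already hold the needed ingredient: Lemma \ref{RB} tells you $\{e^{2\pi i nx}\}$ is a Riesz basis of $L^{2}(\mu)$, hence has unique (in particular $\omega$-independent) coefficients. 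If $|S|>0$, the classical Fourier series of $\chi_{S}$ has some nonzero coefficients, converges in $L^{2}([0,1))$ and therefore (since $g$ is bounded above) in $L^{2}(\mu)$, where its sum is $\chi_{S}=0$ $\mu$-almost everywhere --- contradicting uniqueness of coefficients. This is precisely the closing paragraph of the paper's proof; once you append it, your argument is complete.
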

\begin{proof}
If $\mu$ is absolutely continuous with Radon-Nikodym derivative bounded above and below, it is easy to show $\{e^{2\pi inx}\}_{n=-\infty}^{\infty}$ is a Riesz basis for $L^{2}(\mu)$, and its canonical dual frame is a Riesz basis as well.

If $\{e^{2\pi inx}\}_{n=-\infty}^{\infty}$ has a dextrodual sequence that is a Riesz basis, then
by Lemma \ref{RB}, $\{e^{2\pi inx}\}_{n=-\infty}^{\infty}$ is a Riesz basis in $L^{2}(\mu)$ and by Lemma \ref{bothparts}, $\{e^{2\pi inx}\}_{n=-\infty}^{\infty}$ is a frame for $L^{2}(\mu_{s})$ where $\mu_{s}$ is the singular part of $\mu$, which implies $\mu$ is absolutely continuous by Theorem \ref{notbessel}. Furthermore, the Radon-Nikodym  derivative of $\mu$ is bounded by Theorem \ref{absc}.

Suppose that the Radon-Nikodym derivative $g$ of $\mu$ is zero on a set of Lebesgue positive measures say, $S$. Then
$$\chi_{S}=\sum \langle \chi_{S},e^{2\pi inx}\rangle_{L^{2}[0,1)}e^{2\pi inx}$$ and this convergence is in $L^{2}(\mu)$ as well since $g$ is bounded, which contradicts uniqueness of coefficients for a Riesz basis. Then $g>0$ on $[0,1)$, and by Theorem \ref{absc}, we are done.
\end{proof}
Now we close this subsection with a more general description of measures with the property from equation (\ref{prp}).
\begin{thm}
Let $\mu$ be a finite Borel measure on $[0,1)$ with Radon-Nikodym  derivative $g$. Suppose that there is a Bessel sequence $\{g_{n}\}$ with bound $B$ that is dextrodual to $\{e^{2\pi i nx}\}_{n=-\infty}^{\infty}$ in $L^{2}(\mu)$, then $$g(x)>\frac{1}{3B}$$ almost everywhere on its support.
\end{thm}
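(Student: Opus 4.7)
The plan is to reduce the problem to a Lebesgue-Parseval identity by passing to the absolutely continuous part $\mu_a = g\,dx$. First, Lemma \ref{bothparts} tells us that $\{g_n\}$ remains Bessel with bound $B$ and dextrodual to $\{e^{2\pi inx}\}_{n\in\mathbb{Z}}$ in $L^{2}(\mu_a)$. Applying Lemma \ref{LFB} inside $L^{2}(\mu_a)$ then promotes the exponentials to a semi-lower frame with constant at least $1/B$:
\begin{equation*}
\frac{1}{B}\int_{0}^{1}|f|^{2}\,g\,dx \;\leq\; \sum_{n\in\mathbb{Z}}\bigl|\langle f,e^{2\pi inx}\rangle_{\mu_a}\bigr|^{2} \qquad \text{for all } f\in L^{2}(\mu_a).
\end{equation*}

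Next I would identify the inner products on the right with Lebesgue Fourier coefficients: $\langle f,e^{2\pi inx}\rangle_{\mu_a}=\int_0^1 f(x)e^{-2\pi inx}g(x)\,dx=\widehat{fg}(n)$. For a test function $f=\chi_E$ on a set $E$ where $g$ is essentially bounded, $fg\in L^{2}([0,1),dx)$, and Parseval for Lebesgue measure gives $\sum_{n}|\widehat{fg}(n)|^{2}=\int_{E}g^{2}\,dx$. The displayed inequality thus collapses to
\begin{equation*}
\frac{1}{B}\int_{E}g\,dx \;\leq\; \int_{E}g^{2}\,dx.
\end{equation*}

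To conclude, I would test this on the putative bad set
\begin{equation*}
F \;:=\; \Bigl\{x\in\operatorname{supp}(g)\,:\,g(x)\leq \tfrac{1}{3B}\Bigr\}.
\end{equation*}
Since $g\leq 1/(3B)$ on $F$, this is a valid test set and $\int_F g^2\,dx\leq \tfrac{1}{3B}\int_F g\,dx$. Combined with the previous inequality, this yields $\tfrac{1}{B}\int_F g\,dx\leq \tfrac{1}{3B}\int_F g\,dx$, i.e.\ $\tfrac{2}{3B}\int_F g\,dx\leq 0$, so $\int_F g\,dx=0$. Because $g>0$ almost everywhere on $F$ by definition of $\operatorname{supp}(g)$, this forces $|F|=0$, and hence $g>\tfrac{1}{3B}$ almost everywhere on $\operatorname{supp}(g)$.

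The only genuine subtlety is justifying Parseval in the middle step, which is why I restrict test functions to indicators of sets on which $g$ is bounded; choosing $E=F$ (where $g\leq 1/(3B)$) makes this automatic. Everything else is a clean chain: transfer the hypotheses to $\mu_a$ via Lemma \ref{bothparts}, dualize via Lemma \ref{LFB}, rewrite inner products as Lebesgue Fourier coefficients, and test against the indicator of the bad set.
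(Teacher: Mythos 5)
Your proof is correct, and its skeleton matches the paper's: reduce to $\mu_a$ via Lemma \ref{bothparts}, invoke Lemma \ref{LFB} to get the semi-lower frame bound $\frac{1}{B}\|f\|_{\mu_a}^2\leq\sum_n|\langle f,e^{2\pi inx}\rangle_{\mu_a}|^2$, and identify the right-hand side with $\int_E g^2\,dx$ via Parseval (both arguments use the same identification, and yours correctly flags that $\chi_E g\in L^2(dx)$ needs $g$ bounded on $E$, which holds on the bad set by construction). Where you diverge is the endgame: the paper decomposes the bad set into level sets $E_k=\{ \frac{1}{3B(k+1)}<g\leq\frac{1}{3Bk}\}$, picks one of positive measure, and derives a numerical contradiction from two-sided bounds on $\int_{E_k}g$ and $\int_{E_k}g^2$; you instead test the entire bad set $F$ at once and use the pointwise inequality $g^2\leq\frac{1}{3B}g$ on $F$ to get $\frac{1}{B}\int_F g\leq\frac{1}{3B}\int_F g$, forcing $\int_F g=0$. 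Your route is cleaner, avoids the decomposition entirely, and in fact proves more: running the same argument with $F_c=\{g\leq c\}$ for any $c<\frac{1}{B}$ gives $\int_{F_c}g=0$, so $g\geq\frac{1}{B}$ almost everywhere on $\{g>0\}$, sharpening the paper's constant $\frac{1}{3B}$. The only caveat (shared with the paper) is the reading of ``support'' as $\{x:g(x)>0\}$ in the final step, which is needed to pass from $\int_F g\,dx=0$ to $|F|=0$.
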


\begin{proof}
By Lemma \ref{bothparts}, $\{g_{n}\}$ is a Bessel sequence that is dextrodual to $\{e^{2\pi inx}\}_{n=-\infty}^{\infty}$ in $L^{2}(\mu_{a})$ where $\mu_{a}$ is the absolutely continuous part of $\mu$.
Then $\{e^{2\pi inx}\}_{n=-\infty}^{\infty}$ is a semi-lower frame in $L^{2}(\mu_{a})$ with bound at least $\frac{1}{B}$ by Lemma \ref{LFB}.

Suppose that $g(x)\leq \frac{1}{3B}$ on a set of positive Lebesgue measure $S$ in its support. We have
$$S=\bigcup_{n=1}^{\infty}E_{n}$$ where $E_{n}=\{x\in S : \frac{\frac{1}{3B}}{n+1}<g(x)\leq \frac{\frac{1}{3B}}{n}\}$. There exists a $k\geq 1$ such that $E_{k}$ has positive Lebesgue measure. We have
$$\frac{\frac{1}{3B}}{(k+1)B}|E_{k}|\leq \frac{1}{B}\int_{E_{k}}gdx\leq \sum_{n}|\langle \chi_{E_{k}},e^{2\pi i nx}\rangle_{\mu_{a}}|^{2}$$
$$=\int_{E_{k}}g^{2}dx\leq \frac{(\frac{1}{3B})^{2}}{k^{2}}|E_{k}|\implies
\frac{1}{3B}\geq \frac{k^{2}}{(k+1)B},$$ which is a contradiction.
\end{proof}

\subsection{Classes of measures with frame-like Fourier expansions}

Now we show there are finite Borel measures $\mu$ on $[0,1)$ that have Bessel sequences $\{g_{n}\}_{n=-\infty}^{\infty}$ that are dextrodual to $\{e^{2\pi i nx}\}_{n=-\infty}^{\infty}$ in $L^{2}(\mu)$, even where $\mu$ is not of pure type.

By Theorem \ref{absc}  and Theorem \ref{singular}, we have the following two examples:

\begin{ex}[Lai, Chun-Kit]
 If $\mu$ is an absolutely continuous Borel measure on $[0,1)$ with Radon-Nikodym  derivative bounded above and below on its support, then there is a frame sequence that is dextrodual to $\{e^{2\pi i nx}\}_{n=-\infty}^{\infty}$ in $L^{2}(\mu)$.
\end{ex}

\begin{ex}[Herr and Weber]\label{sc}
    If $\mu$ is a finite singular Borel measure on $[0,1)$, then there is a frame sequence that is dextrodual to $\{e^{2\pi i nx}\}_{n=-\infty}^{\infty}$ in $L^{2}(\mu)$.
\end{ex}
Note Example \ref{sc} follows from scaling a Parseval frame from Theorem \ref{singular} and defining the sequence to be zero on negative indices.

Finally, we will prove the existence of a class of measures with the property from equation (\ref{prp}) that are not of pure type.
It turns out that if the absolutely continuous part of the measure is supported strictly inside interval $[0,1)$, there is still hope. In particular, we have an example that can easily be expanded to a class of measures:

\begin{prop}
    Let $\mu=\frac{1}{2}\delta_{\frac{1}{4}}+\mu_{a}$ where $\mu_{a}$ is a finite absolutely continuous Borel measure on $[0,1)$ with Radon-Nikodym  derivative $\chi_{[\frac{1}{2},1)}$ and $\delta_{\frac{1}{4}}$ is the point mass singular Borel probability measure at $\frac{1}{4}$.  There is a Parseval frame $\{g_{n}\}$ that is dextrodual to $\{e^{2\pi i nx}\}_{n=-\infty}^{\infty}$ in $L^{2}(\mu)$.
\end{prop}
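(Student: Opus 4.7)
The plan is to exploit the orthogonal decomposition $L^2(\mu)=L^2(\mu_s)\oplus L^2(\mu_a)\simeq \mathbb{C}\chi_{\{1/4\}}\oplus L^2([1/2,1),dx)$, and to build each $g_n$ with one summand in each factor. Writing a generic $f\in L^2(\mu)$ as $f=a\chi_{\{1/4\}}+h\chi_{[1/2,1)}$ with $\|f\|_\mu^2=\tfrac12|a|^2+\|h\|^2_{L^2([1/2,1))}$, I need the reconstruction $f=\sum_n\langle f,g_n\rangle_\mu e^{2\pi inx}$ to hold in $L^2(\mu)$, which separately demands $L^2$-convergence on $[1/2,1)$ and pointwise convergence of the symmetric partial sums at $x=1/4$.

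Fix the auxiliary function $\varphi=2\chi_{[0,1/2)}$, which has $\|\varphi\|_{L^2([0,1))}^2=2$, is of bounded variation on $\mathbb{T}$, and satisfies $\varphi(1/4)=2$. Define
\[
g_n(x):=\overline{\widehat\varphi(n)}\,\chi_{\{1/4\}}(x)+e^{2\pi inx}\chi_{[1/2,1)}(x),
\]
where $\widehat\varphi(n)=\int_0^1 \varphi(x)e^{-2\pi inx}\,dx$. A direct computation gives $\langle f,g_n\rangle_\mu=\tfrac{a}{2}\widehat\varphi(n)+\widehat{h_{\mathrm{ext}}}(n)$, where $h_{\mathrm{ext}}=h\chi_{[1/2,1)}$ is viewed as an element of $L^2([0,1))$. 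Squaring and summing, I would apply Plancherel on $[0,1)$ twice: the two diagonal terms combine to $\tfrac14|a|^2\|\varphi\|_2^2+\|h\|^2=\tfrac12|a|^2+\|h\|^2=\|f\|_\mu^2$, while the cross term is proportional to $\langle \varphi,h_{\mathrm{ext}}\rangle_{L^2([0,1))}$, which vanishes by disjointness of supports. Hence $\{g_n\}$ is Parseval.

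For the dextrodual identity I would split the reconstruction series as $(a/2)\sum_n\widehat\varphi(n)e^{2\pi inx}+\sum_n\widehat{h_{\mathrm{ext}}}(n)e^{2\pi inx}$. In $L^2([0,1))$ the first series converges to $(a/2)\varphi$, whose restriction to $[1/2,1)$ is $0$, and the second converges to $h_{\mathrm{ext}}$, whose restriction to $[1/2,1)$ is $h$; restricting yields $L^2([1/2,1))$-convergence of the combined series to $h$. At the point $x=1/4$, Dirichlet's theorem applied to the bounded-variation function $\varphi$ (continuous at $1/4$) gives symmetric partial sum convergence to $\varphi(1/4)=2$, contributing $a$; and Riemann's localization principle applied to $h_{\mathrm{ext}}$ (which vanishes on the open neighborhood $(0,1/2)$ of $1/4$) gives symmetric partial sum convergence to $0$. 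Combining, $\sum_n\langle f,g_n\rangle_\mu e^{2\pi inx}\to a\chi_{\{1/4\}}+h\chi_{[1/2,1)}=f$ in $L^2(\mu)$.

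The main subtlety is the pointwise convergence at $x=1/4$: the $L^2(\mu)$-norm registers point values there, so Plancherel-type $L^2$-convergence is not by itself enough. The choice $\varphi=2\chi_{[0,1/2)}$ is tailored precisely so that Dirichlet's theorem handles the point-mass series while Riemann's localization principle handles the absolutely-continuous-part series, both delivering the required pointwise control at the single point $1/4$.
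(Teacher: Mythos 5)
Your proof is correct and is essentially the paper's argument in explicit form: your function $\tfrac{a}{2}\varphi+h_{\mathrm{ext}}=a\chi_{[0,1/2)}+h\chi_{[1/2,1)}$ is exactly the paper's extension $\tilde f$, your $g_n$ are the concrete realization of the paper's abstractly defined dual sequence, and Parseval follows in both cases from the isometry $f\mapsto\tilde f$ into $L^{2}([0,1))$. The only cosmetic difference is that you justify the pointwise convergence of the symmetric partial sums at $x=\tfrac14$ via Dirichlet's theorem and Riemann localization, where the paper invokes differentiability of $\tilde f$ at $\tfrac14$.
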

\begin{proof}
    Let $f\in L^{2}(\mu)$ and define 
    $$\tilde{f}=f(\frac{1}{4})\chi_{[0,\frac{1}{2})}+f\chi_{[\frac{1}{2},1)}\in L^{2}([0,1)).$$
    Notice that 
    $$||\tilde{f}||_{L^{2}([0,1))}^{2}= \frac{1}{2}|f(\frac{1}{4})|^{2}+||f||_{\mu_{a}}^{2}=||f||_{d\mu}^{2}.$$

    Now see that $S_{M}(\tilde{f})=\sum_{n=-M}^{n=M}\langle \tilde{f},e^{2\pi i nx}\rangle_{L^{2}([0,1))} e^{2\pi i nx}$ converges to $f$ in $L^{2}(\mu)$ as $M\to \infty$.
    This is because $S_{M}(\tilde{f})(\frac{1}{4})\to f(\frac{1}{4})$ as $M\to \infty$ since $\tilde{f}$ is differentiable at $x=\frac{1}{4}$.

    Now since the map $f\to \tilde{f}$ is a linear isometry, for all $n$, there is $g_{n}\in L^{2}(\mu)$ where
    $$\langle f, g_{n}\rangle_{\mu} =\langle \tilde{f},e^{2\pi i nx}\rangle_{L^{2}([0,1))},$$ and $\{g_{n}\}$ is a Parseval frame in $L^{2}(\mu)$
    since $\{e^{2\pi i nx}\}_{n=-\infty}^{\infty}$ is a Parseval frame in $L^{2}([0,1))$.
\end{proof}
The general statement with a similar proof follows:
\begin{prop}\label{discretecase}
Let $\mu=\sum_{k=1}^{n}a_{k}\delta_{b_{k}}+\mu_{a}$ where $a_{k}>0$, $0<b_{k}<c$, $g$ is a Borel function bounded above and below almost everywhere, and $\mu_{a}$ is an absolutely continuous Borel measure on $[0,1)$ with Radon-Nikodym derivative $\chi_{[c,1)}g$.
There is a frame $\{g_{n}\}$ that is dextrodual to $\{e^{2\pi i nx}\}$.
\end{prop}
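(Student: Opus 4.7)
The plan is to generalize the construction from the single-point-mass case by choosing a small disjoint open interval around each atom $b_k$. First, pick pairwise disjoint open intervals $J_1,\dots,J_n\subseteq[0,c)$ with $b_k$ in the interior of $J_k$, and define a linear map $T\colon L^2(\mu)\to L^2[0,1)$ by
$$Tf = \sum_{k=1}^{n} f(b_k)\chi_{J_k} + f\cdot\chi_{[c,1)}.$$
Since $g$ is bounded above and below on $[c,1)$ and there are only finitely many $b_k$, one gets norm comparisons $m\|f\|_\mu^2 \leq \|Tf\|_{L^2[0,1)}^2 \leq M\|f\|_\mu^2$ for some $0<m\leq M$, so $T$ is a bounded linear embedding.

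Next, I would define $g_n := T^{\ast}e^{2\pi inx}\in L^2(\mu)$, so that by construction $\langle f,g_n\rangle_\mu = \langle Tf, e^{2\pi inx}\rangle_{L^2[0,1)}$ is the $n$-th Fourier coefficient of $Tf$. Computing the adjoint gives the explicit formula $g_n(b_k) = a_k^{-1}\int_{J_k} e^{2\pi inx}\,dx$ and $g_n(x) = e^{2\pi inx}/g(x)$ on $[c,1)$. Consequently,
$$\sum_{n=-M}^{M}\langle f,g_n\rangle_\mu e^{2\pi inx} = S_M(Tf),$$
where $S_M$ is the $M$-th symmetric Fourier partial sum.

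It then remains to verify that $S_M(Tf)\to f$ in $L^2(\mu)$. I would split this into the point-mass part and the absolutely continuous part. On $[c,1)$, $L^2[0,1)$-convergence of Fourier series gives $S_M(Tf)\to Tf=f$ in $L^2([c,1))$, which transfers to $L^2(\mu_a)$-convergence because $g$ is bounded above. At each atom $b_k$, since $Tf$ equals the constant $f(b_k)$ throughout the open interval $J_k$ containing $b_k$ in its interior, Riemann's localization principle (or Dini's criterion applied on $J_k$) gives $S_M(Tf)(b_k)\to f(b_k)$ pointwise. Summing the two contributions yields convergence in $L^2(\mu)$. The frame bounds for $\{g_n\}$ come for free from Parseval applied to $Tf$: $\sum_n|\langle f,g_n\rangle_\mu|^2 = \|Tf\|_{L^2[0,1)}^2$, and then the two-sided norm comparison with $\|f\|_\mu^2$ yields the frame inequalities.

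The main obstacle is handling the pointwise requirement at each atom $b_k$, since convergence in $L^2(\mu)$ forces exact pointwise equality at every point mass rather than mere almost-everywhere equality. This is precisely why the construction must arrange $Tf$ to be locally constant around each $b_k$, so that the classical pointwise-convergence criteria apply with no regularity hypothesis on $f$ beyond its values at the atoms.
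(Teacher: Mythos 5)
Your proposal is correct and follows essentially the same route as the paper: the paper proves the single-point-mass case by extending $f$ to a function on $[0,1)$ that is locally constant near the atom, pulling back the Fourier coefficients through the (adjoint of the) embedding, and invoking pointwise convergence at the atom via local regularity, and then states that the general case has "a similar proof." Your treatment of the finitely many atoms with disjoint intervals $J_k$, the adjoint formula for $g_n$, and the two-sided norm comparison giving the frame bounds is exactly the intended generalization.
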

We can also extend this result further with the following statement:
\begin{thm}\label{examplecase}
    Let $\mu$ be a finite Borel measure on $[0,1)$ with Radon-Nikodym derivative $g$ supported in some closed interval $[c,d]$ where $c>0$, and let $g$ be bounded above and below on its support. Suppose also there is a closed interval $[a,b]$ disjoint from $[c,d]$ where $a>0$ such that
    $\mu_{s}([a,b]^{c})=0$ where $\mu_{s}$ is the singular part of $\mu$. Then there is $\{g_{n}\}$ Bessel sequence that is dextrodual to $\{e^{2\pi i nx}\}_{n=-\infty}^{\infty}$ in $L^{2}(\mu).$

    Furthermore, if $L^{2}(\mu_{s})$ is finite-dimensional, the constructed $\{g_{n}\}$ is a frame.
\end{thm}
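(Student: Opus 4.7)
The plan is to construct a single bounded linear operator $V:L^{2}(\mu)\to L^{2}([0,1))$ whose Fourier partial sums recover $f$ in $L^{2}(\mu)$; the Riesz representers of the bounded functionals $f\mapsto\widehat{V(f)}(n)$ then produce the required $g_{n}\in L^{2}(\mu)$. This mimics the point-mass example above, with two extra ingredients: the Herr--Weber Parseval frame for the general singular part (Theorem~\ref{singular}), and a uniform Dirichlet-kernel estimate exploiting the positive gap $\delta:=\operatorname{dist}([a,b],[c,d])>0$.

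Write $f=f_{a}+f_{s}$ by Lebesgue decomposition, and let $\{h_{n}\}_{n\ge 0}\subset L^{2}(\mu_{s})$ be the Parseval frame of Theorem~\ref{singular} dextrodual to $\{e^{2\pi inx}\}_{n\ge 0}$ in $L^{2}(\mu_{s})$. Define an isometry $\tilde{A}:L^{2}(\mu_{s})\hookrightarrow L^{2}([0,1))$ by
\[
\tilde{A}(f_{s})=\sum_{n\ge 0}\langle f_{s},h_{n}\rangle_{\mu_{s}}\,e^{2\pi inx},
\]
so $\tilde{A}(f_{s})\in H^{2}$ and agrees with $f_{s}$ as a class in $L^{2}(\mu_{s})$. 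Then put
\[
V(f)(x)=\begin{cases} f_{a}(x), & x\in[c,d],\\ \tilde{A}(f_{s})(x), & x\notin[c,d].\end{cases}
\]
Using the lower bound $m$ for $g$ on $[c,d]$, one checks $\|V(f)\|_{L^{2}([0,1))}^{2}\le \|f_{s}\|_{\mu_{s}}^{2}+m^{-1}\|f_{a}\|_{\mu_{a}}^{2}\le C\|f\|_{\mu}^{2}$, so $V$ is bounded; taking $g_{n}$ to represent $f\mapsto\widehat{V(f)}(n)$ yields $\sum_{n}|\langle f,g_{n}\rangle_{\mu}|^{2}=\|V(f)\|^{2}\le C\|f\|_{\mu}^{2}$, i.e.\ $\{g_{n}\}$ is Bessel.

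For the dextrodual identity $S_{N}(V(f))\to f$ in $L^{2}(\mu)$, split by the Lebesgue decomposition. In $L^{2}(\mu_{a})$ the standard convergence $S_{N}(V(f))\to V(f)$ in $L^{2}([0,1))$ transfers, via the upper bound on $g$, to $L^{2}(\mu_{a})$-convergence to $V(f)\vert_{[c,d]}=f_{a}$. In $L^{2}(\mu_{s})$, write
\[
S_{N}(V(f))=S_{N}(\tilde{A}(f_{s}))+S_{N}\big((f_{a}-\tilde{A}(f_{s}))\chi_{[c,d]}\big).
\]
Theorem~\ref{singular} gives $S_{N}(\tilde{A}(f_{s}))\to f_{s}$ in $L^{2}(\mu_{s})$, while the second term is the Fourier partial sum of an $L^{2}$ function $G$ supported in $[c,d]$: the Dirichlet-kernel representation $S_{N}G(x)=\int_{c}^{d}G(t)D_{N}(x-t)\,dt$ is uniformly bounded on $[a,b]$ by $\|G\|_{L^{1}}/\sin(\pi\delta)$ (since $|x-t|\ge\delta$ there), and the Riemann--Lebesgue lemma yields pointwise convergence to $0$ on $[a,b]$; dominated convergence against the finite measure $\mu_{s}$ then gives $L^{2}(\mu_{s})$-convergence to $0$.

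For the frame claim, a lower bound for $\|V(f)\|^{2}$ follows once the restriction map $R:L^{2}(\mu_{s})\to L^{2}([c,d])$, $R f_{s}:=\tilde{A}(f_{s})\chi_{[c,d]}$, has operator norm strictly less than one, because
\[
\|V(f)\|^{2}\ge \int_{[c,d]}|f_{a}|^{2}\,dx+(1-\|R\|^{2})\|f_{s}\|_{\mu_{s}}^{2}\ge \min(M^{-1},\,1-\|R\|^{2})\|f\|_{\mu}^{2}.
\]
Since $\tilde{A}(f_{s})\in H^{2}$ and a nonzero $H^{2}$ boundary function cannot vanish on a set of positive Lebesgue measure, $\|Rf_{s}\|<\|f_{s}\|$ for every $f_{s}\neq 0$; when $L^{2}(\mu_{s})$ is finite-dimensional, compactness of the unit sphere upgrades this strict inequality to $\|R\|_{\operatorname{op}}<1$, yielding the frame bound. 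The main obstacle throughout is the $L^{2}(\mu_{s})$-convergence step, since convergence in $L^{2}([0,1))$ fails to imply convergence in $L^{2}(\mu_{s})$ for singular $\mu_{s}$; it is precisely the disjointness of $[a,b]$ and $[c,d]$ that tames the Dirichlet kernel and allows the decoupling.
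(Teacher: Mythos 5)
Your proposal is correct and follows essentially the same route as the paper's proof: transplant the Herr--Weber Parseval frame for $\mu_{s}$ into $L^{2}([0,1))$ via the isometry $\tilde{A}$, combine it with the inverse-weight exponentials for $\mu_{a}$, exploit the positive distance between $[a,b]$ and $[c,d]$ to kill the cross terms of the Fourier partial sums, obtain the Bessel bound from the Plancherel/Pythagorean identity, and obtain the frame bound from finite-dimensionality of $L^{2}(\mu_{s})$. The only differences are cosmetic: you truncate $\tilde{A}(f_{s})$ to $[c,d]^{c}$ rather than to an open neighborhood $I_{1}$ of $[a,b]$, you replace the paper's appeal to the Riemann localization principle by an explicit Dirichlet-kernel estimate plus dominated convergence, and you justify the lower frame bound via the $H^{2}$ boundary-uniqueness theorem and compactness of the unit sphere where the paper argues injectivity of multiplication by $\chi_{I_{1}}$ on $\mathrm{Im}(A)$.
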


\begin{proof}
 Let $f\in L^{2}(\mu)$. By Example \ref{sc}, there is a frame $\{h_{n}\}_{n=-\infty}^{\infty}$ such that
 $\sum_{n}\langle f, h_{n}\rangle_{\mu_{s}}e^{2\pi i nx}\to f$ in $L^{2}(\mu_{s})$.
 Now let $I_{1}$ and $I_{2}$ be two disjoint open intervals containing $[a,b]$ and $[c,d]$ respectively.
 Since $\{h_{n}\}$ is a Bessel sequence, there is $\tilde{f}\in L^{2}([0,1))$ 
 with $\langle f, h_{n}\rangle_{\mu_{s}}$ as its $n$th Fourier coefficient.
Then we have
$$||\sum_{n=-M}^{M}\langle \tilde f\chi_{I_{1}}, e^{2\pi i nx}\rangle_{L^{2}([0,1))} e^{2\pi i nx}-f||_{\mu_{s}}\leq$$
$$||S_{M}(\tilde f\chi_{I_{1}})-S_{M}(\tilde{f})||_{\mu_{s}}+||S_{M}(\tilde{f})-f||_{\mu_{s}}$$
so that $$||\sum_{n=-M}^{M}\langle \tilde f\chi_{I_{1}}, e^{2\pi i nx}\rangle_{L^{2}([0,1))} e^{2\pi i nx}-f||_{\mu_{s}}\to 0$$ as $M\to \infty$. This follows because $||S_{M}(\tilde{f})-f||_{\mu_{s}}\to 0$ by definition of $\{h_{n}\}$, and $||S_{M}(\tilde f\chi_{I_{1}})-S_{M}(\tilde{f})||_{\mu_{s}}\to 0$
because $S_{M}(\tilde{f}\chi_{I_{1}^{c}})$ converges to zero on $[a,b]$ uniformly. Also note that $S_{M}(\tilde{f}\chi_{I_{1}})$ converges to zero uniformly on $[c,d]$ so that $S_{M}(\tilde{f}\chi_{I_{1}})$ converges to zero in $L^{2}(\mu_{a})$ as well where $\mu_{a}$ is the absolutely continuous part of $\mu$.

 Similarly, $$\sum_{n=-M}^{M}\langle f, \chi_{\{x:g(x)>0\}}\frac{e^{2\pi i nx}}{g}\rangle_{\mu_{a}}e^{2\pi i nx}\to f$$ in $L^{2}(\mu_{a})$. Furthermore, 
 
 $\sum_{n=-M}^{M}\langle f, \chi_{\{x:g(x)>0\}}\frac{e^{2\pi i nx}}{g}\rangle_{\mu_{a}}e^{2\pi i nx}\to 0$ uniformly on $[a,b]$ so that the convergence is also in $L^{2}(\mu_{s}).$

 Then our Bessel sequence that is dextrodual to $\{e^{2\pi i nx}\}_{n=-\infty}^{\infty}$ in $L^{2}(\mu)$ is $$\{\chi_{I_{1}}A^{*}(\chi_{I_{1}}e^{2\pi i nx})+\chi_{\{x:g(x)>0\}}\frac{e^{2\pi i nx}}{g}\}$$ where $A:L^{2}(\mu_{s})\to L^{2}([0,1))$ is the bounded operator that maps $f\to \tilde{f}$. Note $A$ is bounded below since $\{h_{n}\}$ is a frame.

 Now notice that by Pythagorean theorem,
 $$\sum_{n}|\langle f, \chi_{I_{1}}A^{*}(\chi_{I_{1}}e^{2\pi i nx})+\chi_{\{x:g(x)>0\}}\frac{e^{2\pi i nx}}{g}\rangle_{\mu} |^{2}=$$
 $$\sum_{n}|\langle A(f)\chi_{I_{1}},e^{2\pi i nx}\rangle_{L^{2}([0,1))}+\langle f\chi_{\{x:g(x)>0\}},e^{2\pi i nx}\rangle_{L^{2}([0,1))}|^{2}=$$
 $$||A(f)\chi_{I_{1}}||_{L^{2}([0,1))}^{2}+||f\chi_{\{x:g(x)>0\}}||_{L^{2}([0,1))}^{2}.$$
 Therefore, $\{\chi_{I_{1}}A^{*}(\chi_{I_{1}}e^{2\pi i nx})+\chi_{\{x:g(x)>0\}}\frac{e^{2\pi i nx}}{g}\}$ is a frame if multiplication by $\chi_{I_{1}}$ on $Im(A)$ is bounded below. Also, because $S_{M}(A(f)\chi_{I_{1}})$ converges to $f$ in $L^{2}(\mu_{S})$, multiplication by $\chi_{I_{1}}$ on $Im(A)$ is injective. Then $\{\chi_{I_{1}}A^{*}(\chi_{I_{1}}e^{2\pi i nx})+\chi_{\{x:g(x)>0\}}\frac{e^{2\pi i nx}}{g}\}$ is a frame if $\chi_{I_{1}}Im(A)$ is a closed set. In particular, if $Im(A)$ is finite-dimensional, $\{\chi_{I_{1}}A^{*}(\chi_{I_{1}}e^{2\pi i nx})+\chi_{\{x:g(x)>0\}}\frac{e^{2\pi i nx}}{g}\}$ is a frame.
\end{proof}
Now conversely, the idea that the absolutely continuous part and singular part of the measure must be separated for this dextrodual property from equation (\ref{prp}) to hold is also illustrated with the following result:
\begin{thm}
    Let $\mu=\mu_{a}+\mu_{s}$ be a finite Borel measure on $[0,1)$ such that $\mu_{s}$ is singular and $\mu_{a}$ is absolutely continuous with Radon-Nykodym derivative $g$ bounded above and below on its support where the support of $g$ contains an interval $(c,d)$. Suppose there exists a Borel set $B$ of Lebesgue measure zero and $\mu$ positive measure such that
    $$B\subseteq [a,b]\subseteq (c,d)\subseteq \{x: g(x)>0\}$$ for some $a$ and $b$. Then there is no $\{c_{n}\}_{n=-\infty}^{\infty}\in \ell^{2}(\mathbb{N})$ such that
    $$\sum_{n}c_{n}e^{2\pi i nx}=\chi_{B}$$ with convergence in $L^{2}(\mu)$.
\end{thm}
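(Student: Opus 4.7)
The plan is to argue by contradiction, combining an $L^2([0,1),dx)$ consequence of the $\ell^2$ assumption with Riemann's localization principle. Suppose such $\{c_n\}\in\ell^2(\mathbb{Z})$ exists, and set $f_N=\sum_{n=-N}^{N}c_n e^{2\pi inx}$. Then $f_N\to\chi_B$ in $L^2(\mu)$, hence in each of $L^2(\mu_a)$ and $L^2(\mu_s)$. Because $B$ is Lebesgue null and $\mu_a\ll dx$, we have $\mu_a(B)=0$, so $\chi_B=0$ as an element of $L^2(\mu_a)$, while $\mu_s(B)=\mu(B)>0$. Using the lower bound $g\geq m>0$ on $(c,d)$ (with $m$ the essential lower bound of $g$ on its support), the convergence $f_N\to 0$ in $L^2(\mu_a)$ upgrades to $\int_{(c,d)}|f_N|^2\,dx\to 0$. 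Since $\{c_n\}\in\ell^2$ and $\{e^{2\pi inx}\}_{n\in\mathbb{Z}}$ is an orthonormal basis of $L^2([0,1),dx)$, the partial sums $f_N$ also converge in $L^2([0,1),dx)$ to the function $F$ with Fourier coefficients $c_n$; matching limits on $(c,d)$ forces $F=0$ Lebesgue a.e.\ on $(c,d)$. After modifying $F$ on a Lebesgue null set (which does not alter $\{c_n\}$), I will take $F\equiv 0$ on the entire open interval $(c,d)$.

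The next step is Riemann's localization principle: for any $F\in L^1([0,1))$ vanishing on an open interval $I$, one has $S_N F(x_0)\to 0$ as $N\to\infty$ for every $x_0\in I$. The proof is the standard one: writing $S_N F(x_0)=\int_{[0,1)\setminus I}F(y)D_N(x_0-y)\,dy$ with $D_N$ the Dirichlet kernel, the factor $1/\sin(\pi(x_0-y))$ is bounded on the region of integration, so Riemann--Lebesgue applied to the oscillatory factor $\sin((2N+1)\pi(x_0-y))$ gives the limit. Applying this with $I=(c,d)$ yields $f_N(x)=S_N F(x)\to 0$ pointwise for every $x\in(c,d)$, and in particular for every $x\in B\subseteq(c,d)$.

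To finish, I will combine this with $f_N\to\chi_B$ in $L^2(\mu_s)$: extracting a subsequence, $f_{N_k}\to\chi_B$ $\mu_s$-almost everywhere, so $f_{N_k}(x)\to 1$ for $\mu_s$-a.e.\ $x\in B$. But the previous step gives $f_N(x)\to 0$ at every point of $B$, so also $f_{N_k}(x)\to 0$ on all of $B$, forcing $\chi_B=0$ $\mu_s$-a.e.\ on $B$, i.e.\ $\mu_s(B)=0$. This contradicts $\mu_s(B)=\mu(B)>0$. The main obstacle is identifying Riemann localization as the right bridge: pointwise information on the Lebesgue null set $B$ (which is where the $L^2(\mu_s)$ limit actually lives) is not directly accessible from the $L^2([0,1),dx)$ vanishing of $F$ on $(c,d)$, but localization of Dirichlet partial sums delivers exactly this transfer from an $L^2$ statement on the interval to a pointwise statement on the null set inside it.
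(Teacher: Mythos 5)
Your proof is correct and follows essentially the same strategy as the paper's: show the $L^2([0,1),dx)$ limit of the partial sums vanishes on $(c,d)$, invoke Riemann localization to force the partial sums to tend to zero on $B$, and contradict the $L^2(\mu_s)$ convergence to $\chi_B$ together with $\mu_s(B)>0$. The only difference is cosmetic: you obtain $F=0$ a.e.\ on $(c,d)$ by a direct $L^2$ comparison using the lower bound on $g$, whereas the paper routes this step through Carleson's theorem and Egorov, and you finish with pointwise convergence plus a $\mu_s$-a.e.\ convergent subsequence where the paper uses uniform convergence on $B$.
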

\begin{proof}
Suppose there is $\{c_{n}\}_{n=-\infty}^{\infty}\in \ell^{2}(\mathbb{N})$ such that
    $$\sum_{n}c_{n}e^{2\pi i nx}=\chi_{B}$$ with convergence in $L^{2}(\mu)$. Let $$f=\sum_{n}c_{n}e^{2\pi i nx}$$ where the limit is taken in $L^{2}([0,1))$.

    We claim $f=0$ on $\{x: g(x)>0\}$. Otherwise, let $A\subseteq \{x: g(x)>0\}$ be a set of positive Lebesgue measure where $|f|>0$.
    We have $\sum_{n=-M}^{M}c_{n}e^{2\pi i nx}\to f$ point-wise almost everywhere in $[0,1)$ by Carleson's Theorem. Then by Egorov's theorem, there is a set of positive Lebesgue measure $C\subseteq A$ so that 
    $\sum_{n=-M}^{M}c_{n}e^{2\pi i nx}\to f$ uniformly on $C$. Then
    $$\lim_{M\to \infty}\int_{C}|\sum_{n=-M}^{M}c_{n}e^{2\pi i nx}|^{2}dx=\int_{C}|f|^{2}dx>0.$$ But this contradicts the fact that
    $$\lim_{M\to \infty}\int_{\{x:g(x)>0\}}|\sum_{n=-M}^{M}c_{n}e^{2\pi i nx}|^{2}dx=0.$$ This proves the claim.

    Now we have that $\sum_{n=-M}^{M}c_{n}e^{2\pi i nx}$ converges to zero uniformly on $B$ since we can assume $f=0$ on $(c,d)$.
    But by assumption we have
    $$\int_{B}|\sum_{n=-M}^{M}c_{n}e^{2\pi i nx}-\chi_{B}|^{2}d\mu_{s}+\int_{B^{c}}|\sum_{n=-M}^{M}c_{n}e^{2\pi i nx}-\chi_{B}|^{2}d\mu_{s}\to 0$$ as $M\to \infty$ so that by uniform convergence on $B$, we must have $\mu_{s}(B)=0$, which is a contradiction.
\end{proof}

\section{A Fourier dextrodual example for the real line}
We now prove the existence of a frame-like Fourier expansion for singular Borel probability measures on $\mathbb{R}$ that is of different nature than our dextrodual expansions from the last section. This expansion arises from Kaczmarz algorithms for Hilbert spaces as well as the Rokhlin disintegration theorem to relate measures on $\mathbb{R}$ to measures on $[0,1)\times \mathbb{Z}$.
\subsection{The case for the torus}
To show this expansion, we first recall the techniques of the case of the torus.
In \cite{Herr2017Fourier}, Herr and Weber prove Theorem \ref{singular} using the Kaczmarz algorithm for Hilbert spaces:
\begin{defn}
Given a complete sequence $\{e_{n}\}_{n=0}^{\infty}$ in Hilbert space $H$, define the 
\textbf{auxiliary sequence} associated with $\{e_{n}\}$ recursively:
$$g_{0}=e_{0}$$ and for $n\geq 1$
$$g_{n}=e_{n}-\sum_{k=0}^{n-1}\langle e_{n},e_{k}\rangle g_{k}.$$

The sequence $\{e_{n}\}$ is called \textbf{effective} if for all $f\in H$,
$$\sum_{k=0}^{n}\langle f,g_{k}\rangle e_{n}\to f$$ in $H$.
\end{defn}
Furthermore, effectiveness of the sequence $\{e_{n}\}_{n=0}^{\infty}$ in a Hilbert space $H$ is equivalent to $\{g_{n}\}_{n=0}^{\infty}$ being a Parseval frame by a result from Haller and Szwarc \cite{Haller2005Kaczmarz}.

The authors in \cite{Herr2017Fourier} show the effectiveness of $\{e^{2\pi i nx}\}_{n=0}^{\infty}$ using the following Theorem:

\begin{thm}[Kwapień and Mycielski]\label{KM}
A stationary sequence of complete unit vectors in a Hilbert space is effective if and only if its spectral measure is singular with respect to Lebesgue measure or normalized Lebesgue measure.
\end{thm}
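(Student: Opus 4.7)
My plan is to reduce to a canonical $L^2(\mu)$ model, translate effectiveness into the convergence of an iterated product of orthogonal projections, and then tackle the two directions separately. First, by the Herglotz theorem any stationary sequence $\{e_n\}$ of unit vectors satisfies $\langle e_n, e_m\rangle = \int e^{2\pi i(n-m)x}\,d\mu(x)$ for a probability measure $\mu$ on $[0,1)$, and completeness yields a canonical isometry from the ambient Hilbert space onto $\overline{\operatorname{span}\{e^{2\pi inx}:n\geq 0\}}\subseteq L^2(\mu)$ sending $e_n\mapsto e^{2\pi inx}$. Under this identification the statement reduces to: $\{e^{2\pi inx}\}_{n\geq 0}$ is effective in its closed linear span in $L^2(\mu)$ if and only if $\mu\perp dx$.

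Second, a direct induction from the recursion $g_n = e_n - \sum_{k<n}\langle e_n, e_k\rangle g_k$ gives $\langle f, g_n\rangle = \langle f - x_{n-1}, e_n\rangle$, where $x_n = \sum_{k\leq n}\langle f, g_k\rangle e_k$. Hence the residuals $r_n = f - x_n$ evolve by a single projection $r_n = Q_n r_{n-1}$ with $Q_n = I - e_n\otimes e_n$, so effectiveness is equivalent to the strong-operator convergence $Q_n Q_{n-1}\cdots Q_0 \to 0$. Because $\|Q_n\cdots Q_0\|\leq 1$, it suffices to establish this convergence on a dense subset.

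For the reverse direction (not singular implies not effective), write $d\mu = w\,dx + d\mu_s$ with $w>0$ on a set of positive Lebesgue measure. Szeg\H{o}'s theorem supplies the lower bound
$$\inf_{p(0)=1}\int|p|^2\,d\mu \geq \exp\int\log w\,dx,$$
so whenever $\log w\in L^1$ the Szeg\H{o} outer function of $w$ yields an explicit $f\in L^2(\mu)$ whose Kaczmarz residuals $r_n$ cannot decay to zero. The remaining subcase ($w\not\equiv 0$ but $\log w\notin L^1$) should still admit an obstruction by localizing on the portion of the support of $w$ where $\log w$ is locally integrable.

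The main obstacle is the forward direction: singular $\mu$ implies effective. For a generic singular $\mu$ the exponentials need not decorrelate in any obvious way --- non-Rajchman singular measures have $\hat\mu(n)\not\to 0$ --- so the products $Q_n\cdots Q_0$ possess no \emph{a priori} mixing mechanism driving them strongly to zero. An honest proof must supply a quantitative spectral or ergodic ingredient tying the per-step norm loss $\|r_{n-1}\|^2 - \|r_n\|^2 = |\langle r_{n-1}, e_n\rangle|^2$ to a quantity whose total exhausts $\|f\|^2$ exactly when $\mu\perp dx$; this is the substance of Kwapie\'n and Mycielski's original argument and is where I would expect the bulk of the work.
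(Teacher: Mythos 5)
The paper does not prove this theorem: it is imported as a known result from Kwapie\'n and Mycielski \cite{Kwapien2001Kaczmarz}, so there is no internal proof to compare against. Judged on its own, your write-up is a correct framing but not a proof. The reduction via Bochner/Herglotz to $\overline{\operatorname{span}}\{e^{2\pi i nx}: n\geq 0\}\subseteq L^{2}(\mu)$, the identity $\langle f,g_{n}\rangle=\langle f-x_{n-1},e_{n}\rangle$, and the reformulation of effectiveness as strong convergence $Q_{n}Q_{n-1}\cdots Q_{0}\to 0$ are all right and are the standard entry point. But both implications are left open.

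For the forward direction you concede that the ``mixing mechanism'' is missing; that concession is the theorem. The actual argument of Kwapie\'n--Mycielski (in the form sharpened by Haller and Szwarc) is function-theoretic rather than ergodic: writing $m(z)=\sum_{n\geq 0}\widehat{\mu}(n)z^{n}$ and $1/m(z)=\sum_{n\geq 0}\alpha_{n}z^{n}$, one shows $g_{n}=\sum_{j=0}^{n}\overline{\alpha_{n-j}}\,e_{j}$ and that $\{g_{n}\}$ is Parseval exactly when the boundary real part of $m$ vanishes almost everywhere (equivalently $\mu\perp dx$) or $m\equiv 1$ (normalized Lebesgue); without an identity of this kind there is no route to the conclusion. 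The reverse direction as sketched also does not close. Positivity of the Szeg\H{o} infimum cannot by itself obstruct effectiveness, since normalized Lebesgue measure has Szeg\H{o} infimum equal to $1$ and \emph{is} effective: the residual $r_{n}(f)=Q_{n}\cdots Q_{0}f$ annihilates the $e_{0}$-component at the very first step, so the distance from $e_{0}$ to $\overline{\operatorname{span}}\{e_{n}:n\geq 1\}$ is not the quantity controlling $\lim\|r_{n}\|$. You would need to produce a specific $f$ and bound its greedy residuals below in a way that genuinely distinguishes Lebesgue measure from other measures with $\log w\in L^{1}$, and the subcase $w\not\equiv 0$ with $\log w\notin L^{1}$ is only gestured at. As it stands, neither half of the equivalence is established.
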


Recall that given a stationary sequence of unit vectors in a Hilbert space $H$, $\{e_{n}\}$, the \textbf{spectral measure} $\mu$ is the probability measure on $[0,1)$ such that
$$\langle e_{0},e_{k}\rangle =\int_{0}^{1}e^{-2\pi i kx}d\mu(x)$$
for all $k$ from the Bochner theorem.

\subsection{Two dimensional case}

Surprisingly, these Fourier expansions on the real line are inspired from the Fourier expansions in $[0,1)^{2}$ by Herr, Jorgenesen, and Weber in \cite{Herr2022Fourier}.
In this two dimensional paper, the authors use the Rokhlin disintegration \cite{Rohlin1949Fundamental,Rohlin1949Decomposition} to develop a notation of slice singular, which is stronger than singular.

\begin{thm}[Rokhlin disintegration]\label{rok}
For a Borel probability measure $\mu$ on $A\times B$ where $A$ and $B$ are metric spaces, there is a Borel probability measure $\mu_{b}$ on $B$ and a family of Borel probability measures $\gamma^{b}$ on $A$, indexed by $B$ such that
\begin{enumerate}
\item For $f(a,b)\in L^{1}(\mu)$ and $\mu_{b}$ almost every $b$, $f(a,b)\in L^{1}(\gamma^{b})$ and $\int_{A}f(a,b)d\gamma^{b}\in L^{1}(\mu_{b})$

\item For each $f\in L^{1}(\mu)$
$$\int_{A\times B}f(a,b)d\mu=\int_{B}\int_{A}f(a,b)d\gamma^{b}d\mu_{b}.$$
\end{enumerate}
\end{thm}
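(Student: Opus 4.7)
The plan is to construct $\mu_b$ directly as the pushforward of $\mu$ under the projection $\pi_B:A\times B\to B$ (so for each Borel $F\subseteq B$, $\mu_b(F)=\mu(A\times F)$), and then to build the family $\{\gamma^b\}_{b\in B}$ by a Radon--Nikodym argument applied to a countable dense family of test functions on $A$.

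Concretely, I would first fix a continuous bounded $\phi:A\to\mathbb{R}$ and consider the finite signed measure on $B$ defined by $\nu_\phi(F)=\int_{A\times F}\phi(a)\,d\mu(a,b)$. Since $|\nu_\phi(F)|\leq \|\phi\|_\infty\,\mu_b(F)$, $\nu_\phi\ll\mu_b$, so by Radon--Nikodym there is a bounded Borel function $L_\phi:B\to\mathbb{R}$ with $\nu_\phi(F)=\int_F L_\phi(b)\,d\mu_b(b)$. Assuming $A$ is separable (a minor additional hypothesis usually imposed in this theorem), choose a countable $\mathbb{Q}$-algebra $\mathcal{D}\subseteq C_b(A)$ that separates points and contains the constants. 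Off a single $\mu_b$-null set $N$ (obtained as a countable union of null sets, one per element of $\mathcal{D}$ and one per algebraic relation we need), the map $\phi\mapsto L_\phi(b)$ is a positive $\mathbb{Q}$-linear functional on $\mathcal{D}$ with $L_{\mathbf{1}}(b)=1$ and $|L_\phi(b)|\leq\|\phi\|_\infty$. Extend it by continuity to a positive linear functional on $C_b(A)$, and invoke the Riesz representation theorem (assuming $A$ is a Polish or suitably regular space, which is where the metric hypothesis is used) to produce a Borel probability measure $\gamma^b$ on $A$ with $\int_A\phi\,d\gamma^b=L_\phi(b)$ for every $\phi\in C_b(A)$. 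For $b\in N$ define $\gamma^b$ arbitrarily.

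Having produced the $\gamma^b$, I would verify the disintegration identity in stages: first for $f$ of product form $\phi(a)\psi(b)$ with $\phi\in C_b(A)$, $\psi\in C_b(B)$ (where it follows immediately from the definition of $L_\phi$), then for linear combinations, then by a monotone class or Dynkin $\pi$--$\lambda$ argument for all bounded Borel $f$, and finally for $f\in L^1(\mu)$ by monotone/dominated convergence. The measurability in $b$ of $\int_A f(a,b)\,d\gamma^b(b)$ for general $f\in L^1(\mu)$ is read off by approximating $f$ by simple functions of product form and using that the class of $f$ for which measurability holds is closed under bounded pointwise limits.

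The main obstacle is the passage from a countable family of Radon--Nikodym statements (each valid off its own null set) to a \emph{single} $\mu_b$-null exceptional set on which all linearity, positivity, and continuity conditions fail simultaneously, and then upgrading the resulting functional on a countable dense subalgebra of $C_b(A)$ to an honest countably additive Borel probability measure on $A$. This is exactly where the regularity of $A$ enters: without Polish-type assumptions, the Riesz representation/inner regularity step can fail, which is why the classical result is usually stated for Polish or standard Borel spaces rather than arbitrary metric spaces. All the rest, once the family $\{\gamma^b\}$ exists with the right measurability, reduces to standard measure-theoretic bootstrapping from product indicators to $L^1$.
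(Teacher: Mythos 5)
The paper does not prove this statement at all: it is quoted as a classical result of Rokhlin, with citations to the original 1949 papers, and is then used as a black box. So there is no proof in the paper to compare yours against; what you have written is the standard textbook construction (pushforward marginal, Radon--Nikodym against a countable test family, a single exceptional null set, Riesz representation, monotone class bootstrap), and as an outline it is essentially correct.

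Two remarks on the one genuinely delicate step, which you flag but do not fully resolve. First, for non-compact $A$ a positive linear functional on $C_b(A)$ with $L_{\mathbf{1}}=1$ corresponds in general only to a finitely additive set function (equivalently, a Radon measure on the Stone--\v{C}ech compactification), so invoking ``the Riesz representation theorem'' directly does not hand you a countably additive Borel probability measure on $A$. The standard fix for Polish $A$ is to embed $A$ as a Borel (indeed $G_\delta$) subset of a compact metrizable space such as the Hilbert cube, run your argument there with $\mathcal{D}\subseteq C(\overline{A})$, obtain genuine Borel probability measures $\gamma^b$ on $\overline{A}$ by compact Riesz, and then show $\gamma^b(\overline{A}\setminus A)=0$ for $\mu_b$-a.e.\ $b$ by integrating $\gamma^b(\overline{A}\setminus A)$ against $\mu_b$ and using that $\mu$ lives on $A\times B$. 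That last step is the tightness argument your sketch is missing. Second, you are right that ``metric spaces'' as stated in the paper is too weak a hypothesis for the theorem as classically proved --- one needs Polish (or standard Borel, or Radon) spaces --- but this is harmless for the paper, whose applications only ever take $A=[0,1)$ and $B=\mathbb{Z}$ or a Borel subset of Euclidean space. With the compactification step supplied, your argument is a complete and correct proof of the result in the generality the paper actually uses.
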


By the Rokhlin disintegration, for a Borel probability measure $\mu$ on $[0,1)\times S$ where $S$ is a Borel set in Euclidean space, $\mu$ has a family of Borel probability measures on $[0,1)$, $\gamma^{s}$ indexed by $S$ and a Borel probability measure $\mu_{s}$ on $S$ called the \textbf{marginal measure} such that the above properties hold. The authors in \cite{Herr2022Fourier} define a measure $\mu$ on $[0,1)^{2}$ to be slice singular if there is a disintegration where $\mu_{s}$ is singular and $\gamma^{s}$ is singular for $\mu_{s}$ almost every $s$. Part of the discussion of this section however is that $\mu_{s}$ being singular or being defined on $[0,1)$ is not always important for general Fourier expansions.

\begin{defn}
We will call Borel probability measure $\mu$ on $[0,1)\times S$ \textbf{$S$-slice singular} if for $\mu_{s}$ almost every $s$, $\gamma^{s}$ is singular where $\gamma^{s}$ and $\mu_{s}$ are defined from Theorem \ref{rok}.
\end{defn}

The following result they proved using a operator version of the Kaczmarz algorithm, and it is important for our discussion that this result holds regardless of any properties of $\mu_{s}$.

\begin{thm}[Herr, Jorgensen, and Weber]\label{2dlemma}
If $\mu$ is an $S$-slice singular measure on $[0,1)\times S$, then for all $f\in L^{2}(\mu)$,
$$f(x,s)=\sum_{n=0}^{\infty}\langle f(x,s),g_{n}^{s}(x)\rangle_{L^{2}(\gamma^{s})}e^{2\pi i nx}$$
where for each $s$, $g_{n}^{s}(x)$ is the auxiliary sequence associated with $L^{2}(\gamma^{s})$ and $\{e^{2\pi inx}\}$, and for each $n$, $$\langle f(x,s),g_{n}^{s}(x)\rangle_{L^{2}(\gamma^{s})}\in L^{2}(\mu_{s}).$$
\end{thm}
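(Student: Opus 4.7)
The plan is to reduce the statement to the one-dimensional singular case slicewise and then integrate against the marginal $\mu_s$ via the Rokhlin disintegration of Theorem \ref{rok}. Fix $f \in L^2(\mu)$ and set $f_s(x) := f(x,s)$, so that Rokhlin furnishes $f_s \in L^2(\gamma^s)$ for $\mu_s$-a.e.\ $s$ together with
$$\|f\|_{L^2(\mu)}^2 = \int_S \|f_s\|_{L^2(\gamma^s)}^2 \, d\mu_s.$$
By $S$-slice singularity, $\gamma^s$ is singular for $\mu_s$-a.e.\ $s$, so Theorem \ref{KM} (equivalently Theorem \ref{singular}) applies slicewise: $\{g_n^s\}$ is a Parseval frame in $L^2(\gamma^s)$ and $\{e^{2\pi i n x}\}$ is effective there, yielding the slice identity
$$f_s = \sum_{n=0}^{\infty} \langle f_s, g_n^s\rangle_{L^2(\gamma^s)}\, e^{2\pi i n x} \quad \text{in } L^2(\gamma^s).$$

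Next I would settle the measurability and $L^2(\mu_s)$-integrability assertions. The Fourier coefficients $\widehat{\gamma^s}(m) = \int e^{-2\pi i m x}\, d\gamma^s$ are Borel in $s$ as a direct consequence of Theorem \ref{rok}, and the auxiliary vector $g_n^s$ is, by its recursive construction, a finite linear combination of the exponentials $e^{2\pi i k x}$, $0 \le k \le n$, with coefficients that are polynomial in the entries $\widehat{\gamma^s}(j)$, $|j|\le n$. Hence $s\mapsto \langle f_s, g_n^s\rangle_{L^2(\gamma^s)}$ is Borel on $S$, and the Parseval identity for $\{g_n^s\}$ integrated against $\mu_s$ gives
$$\sum_{n=0}^{\infty} \int_S |\langle f_s, g_n^s\rangle_{L^2(\gamma^s)}|^2\, d\mu_s = \int_S \|f_s\|_{L^2(\gamma^s)}^2\, d\mu_s = \|f\|_{L^2(\mu)}^2,$$
so each coefficient lies in $L^2(\mu_s)$.

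The last step is to promote slicewise $L^2(\gamma^s)$ convergence to convergence in $L^2(\mu)$ by dominated convergence. Writing $S_N^s(f_s)(x) := \sum_{n=0}^N \langle f_s, g_n^s\rangle_{L^2(\gamma^s)} e^{2\pi i n x}$, Rokhlin yields
$$\bigl\| f - S_N^s(f_s)\bigr\|_{L^2(\mu)}^2 = \int_S \bigl\| f_s - S_N^s(f_s)\bigr\|_{L^2(\gamma^s)}^2\, d\mu_s.$$
The standard Kaczmarz monotonicity (each iteration subtracts the orthogonal projection of the current residual onto a single vector) gives $\|f_s - S_N^s(f_s)\|_{L^2(\gamma^s)} \le \|f_s\|_{L^2(\gamma^s)}$, providing the integrable dominator $\|f_s\|_{L^2(\gamma^s)}^2 \in L^1(\mu_s)$. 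Combined with the slicewise convergence from the first step, dominated convergence closes the proof.

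The main obstacle, in my view, is not any analytic estimate but the measurable-selection bookkeeping: one must check that the recursion defining $\{g_n^s\}$ can be performed jointly Borel in $(n,s)$ so that the coefficient functions are genuine members of $L^2(\mu_s)$ and the Rokhlin identity can legitimately be applied to the telescoping partial sums. Once that measurability is secured, the remainder is a clean combination of the one-dimensional Kwapień--Mycielski/Herr--Weber theorem with dominated convergence.
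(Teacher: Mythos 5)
Your argument is correct, but it is worth noting that the paper does not actually prove this statement: it is quoted from \cite{Herr2022Fourier}, where Herr, Jorgensen, and Weber establish it by running an operator-valued version of the Kaczmarz algorithm directly in the disintegrated space, with $L^{2}(\mu_{s})$-valued inner products replacing scalar ones. Your route is genuinely different and more elementary: you reduce to the scalar one-dimensional theorem (Theorem \ref{KM}/Theorem \ref{singular}) on each slice $L^{2}(\gamma^{s})$, and then pass from slicewise convergence to convergence in $L^{2}(\mu)$ via Theorem \ref{rok} and dominated convergence, with the dominator $\|f_{s}\|_{L^{2}(\gamma^{s})}^{2}$ supplied by the contractive nature of the Kaczmarz residual $f_{s}-S_{N}^{s}(f_{s})=(I-P_{e_{N}})\cdots(I-P_{e_{0}})f_{s}$ (this does rely on the standard Kwapie\'n--Mycielski identification of the partial sum $\sum_{n=0}^{N}\langle f_{s},g_{n}^{s}\rangle e^{2\pi inx}$ with the $N$-th Kaczmarz iterate, which you should cite explicitly). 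You are right that the only real work in your version is the measurability bookkeeping, and your resolution is sound: $g_{n}^{s}$ is a finite linear combination of $e^{2\pi ikx}$, $0\le k\le n$, with coefficients polynomial in the Fourier coefficients $\widehat{\gamma^{s}}(j)$, and these are Borel in $s$ by the disintegration; one should also note that completeness of $\{e^{2\pi inx}\}_{n\ge 0}$ in $L^{2}(\gamma^{s})$ (needed for effectiveness) comes from the F.\ and M.\ Riesz theorem applied to the singular slice $\gamma^{s}$, which is exactly what Theorem \ref{singular} packages. The trade-off between the two approaches is that the operator Kaczmarz formalism absorbs the measurable-selection issues into the direct-integral framework at the cost of heavier machinery, while your slicewise argument uses only the scalar theorem plus dominated convergence but must verify joint measurability by hand.
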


\subsection{Main result}

Now for our discussion for singular measures on the real line, we draw on a result from Forelli \cite{Forelli1975F} that gives us a completeness result of a lattice of frequencies of exponential functions. This result is an extension of the F. and M. Riesz theorem for the real line.

\begin{thm}[Forelli]\label{Forelli}
Let $a,b>0$ be linearly independent over $\mathbb{Z}$.
If $\mu$ is a finite Borel measure on $\mathbb{R}$ with
$$\hat{\mu}(an+bm)=\int_{\mathbb{R}}e^{-(an+bm)x}d\mu=0$$ for all $n,m\in \mathbb{Z}^{+}$, then $\mu$ is absolutely continuous with respect to Lebesgue measure.
\end{thm}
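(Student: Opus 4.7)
The strategy is to reduce the hypothesis to half-line vanishing of $\hat{\mu}$ and then invoke the F.\ and M.\ Riesz theorem on the real line, which asserts that a finite Borel measure on $\mathbb{R}$ whose Fourier transform vanishes on a half-line $[c,\infty)$ is absolutely continuous with respect to Lebesgue measure.

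The first step is to show that the arithmetic set $E = \{an + bm : n,m \in \mathbb{Z}^{+}\}$ is dense in some half-line $[T_{0},\infty)$. Since ``linearly independent over $\mathbb{Z}$'' for positive reals $a,b$ means $a/b \notin \mathbb{Q}$, Weyl's equidistribution theorem gives that $\{bm \bmod a : m \in \mathbb{Z}^{+}\}$ is equidistributed in $[0,a)$. For any target $T$ sufficiently large and any $\varepsilon > 0$, I would choose $m \in \mathbb{Z}^{+}$ so that $(T - bm)/a$ lies within $\varepsilon/a$ of some integer $n \geq 1$ (taking $m$ from a long enough initial segment forces $(T - bm)/a > 1$ for some valid $m$), and then $|an + bm - T| < \varepsilon$.

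The second step combines continuity and a shift. Because $\mu$ is finite, $\hat{\mu}$ is uniformly continuous on $\mathbb{R}$, so the vanishing on the dense subset $E \subset [T_{0},\infty)$ extends to $\hat{\mu}(\xi) = 0$ for all $\xi \geq T_{0}$. Defining $d\nu = e^{-2\pi i T_{0} x}\,d\mu$ yields a finite Borel measure with $\hat{\nu}(\xi) = \hat{\mu}(\xi + T_{0})$, so $\hat{\nu}$ vanishes on $[0,\infty)$. The classical F.\ and M.\ Riesz theorem on $\mathbb{R}$ (proved via the Poisson extension to the lower half-plane and $H^{1}$ theory) then forces $\nu \ll dx$, and since multiplication by the unimodular factor $e^{-2\pi i T_{0} x}$ preserves mutual absolute continuity, $\mu \ll dx$ as well.

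The main obstacle is the density step: because both $n$ and $m$ must be strictly positive, $E \subset [a+b,\infty)$ and one cannot hope for density in all of $\mathbb{R}$, only in a half-line. This is precisely why both the irrationality hypothesis (which is needed to make the lattice $a\mathbb{Z} + b\mathbb{Z}$ dense in $\mathbb{R}$ and to run the equidistribution argument on translates) and the one-sided Riesz brothers theorem are essential to the proof; any weaker density conclusion, or a two-sided Fourier hypothesis, would still fit into the same reduction scheme once the first step is carried out carefully.
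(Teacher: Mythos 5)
There is a genuine gap in your first step, and it is fatal to the whole scheme: the set $E=\{an+bm:\ n,m\in\mathbb{Z}^{+}\}$ is not dense in any half-line. Since $a,b>0$, the inequality $an+bm\le R$ forces $n\le R/a$ and $m\le R/b$, so $E$ meets every bounded interval in at most finitely many points; it is a closed, locally finite subset of $[a+b,\infty)$ no matter how irrational $a/b$ is. Your equidistribution argument hides a dependence of the threshold on $\varepsilon$: to place $\{bm/a\}$ within $\varepsilon/a$ of a prescribed fractional part you may need $m$ as large as some $M(\varepsilon)\to\infty$, and you then need $T\ge bM(\varepsilon)+a$ to make $n\ge 1$. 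What you actually prove is ``for every $\varepsilon>0$ there is $T_{0}(\varepsilon)$ such that $\operatorname{dist}(\xi,E)<\varepsilon$ for all $\xi\ge T_{0}(\varepsilon)$,'' and combined with the uniform continuity of $\hat{\mu}$ this yields only $\hat{\mu}(\xi)\to 0$ as $\xi\to+\infty$. That conclusion is far too weak to proceed: there exist singular (Rajchman) measures, e.g.\ suitable Riesz products, whose Fourier transforms vanish at infinity, so one cannot reach a half-line of exact zeros and the F.\ and M.\ Riesz step never gets the hypothesis it needs. Your steps two and three (the modulation shift and the half-line F.\ and M.\ Riesz theorem on $\mathbb{R}$) are fine in themselves, but they are unreachable.

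For comparison: the paper gives no proof of this statement --- it is quoted from Forelli --- so there is no internal argument to match yours against. The proofs in the literature do not attempt to make the frequency set topologically dense in a half-line of $\mathbb{R}$. Instead one transfers the problem, e.g.\ by pushing $\mu$ forward to $\mathbb{T}^{2}$ via $x\mapsto(e^{iax},e^{ibx})$, so that the hypothesis becomes the vanishing of $\hat{\nu}(n,m)$ on the quarter-lattice $n,m\ge 1$, and then applies Helson--Lowdenslager / de Leeuw--Glicksberg-type analyticity arguments for measures; the irrationality of $a/b$ enters through the density of the orbit $\{(e^{iax},e^{ibx}):x\in\mathbb{R}\}$ in $\mathbb{T}^{2}$, not through any density of $a\mathbb{Z}^{+}+b\mathbb{Z}^{+}$ in $\mathbb{R}$. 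If you want to salvage your outline, that change of venue is the missing idea.
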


\begin{cor}\label{2complete}
If $\mu$ is a singular finite Borel measure on $\mathbb{R}$, then for $a,b>0$ that are linearly independent over $\mathbb{Z}$
$$\overline{span}\{e^{2\pi i(a^{-1}nx+b^{-1}mx)}\}: n,m\in \mathbb{N}\}=L^{2}(\mathbb{R},\mu)$$
\end{cor}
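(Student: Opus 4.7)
The plan is to prove this by contradiction, using Theorem \ref{Forelli} applied to an appropriate auxiliary complex measure. Suppose for contradiction that the closed span
\[
V \;=\; \overline{\mathrm{span}}\{e^{2\pi i(a^{-1}n + b^{-1}m)x} : n, m \in \mathbb{N}\}
\]
is a proper subspace of $L^{2}(\mathbb{R}, \mu)$. By the Hilbert space projection theorem there exists a nonzero $f \in L^{2}(\mu)$ with $f \perp V$, which reads
\[
\int_{\mathbb{R}} f(x)\, e^{-2\pi i(a^{-1}n + b^{-1}m)x}\, d\mu(x) \;=\; 0 \qquad \text{for all } n, m \in \mathbb{N}.
\]
I would then introduce the measure $d\nu = f\, d\mu$; by Cauchy--Schwarz, $|\nu|(\mathbb{R}) \leq \|f\|_{L^{2}(\mu)}\, \mu(\mathbb{R})^{1/2} < \infty$, so $\nu$ is a finite complex Borel measure on $\mathbb{R}$, and the orthogonality condition is exactly the statement that $\hat\nu(a^{-1}n + b^{-1}m) = 0$ for all $n, m \in \mathbb{N}$.

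The next step is a one-line check that $a^{-1}, b^{-1} > 0$ are themselves linearly independent over $\mathbb{Z}$: if $p a^{-1} + q b^{-1} = 0$ then $pb + qa = 0$, and linear independence of $(a, b)$ forces $p = q = 0$. Applying Theorem \ref{Forelli} to $\nu$ with the pair $(a^{-1}, b^{-1})$ then yields that $\nu$ is absolutely continuous with respect to Lebesgue measure. On the other hand, $\nu \ll \mu$ and $\mu$ is singular with respect to Lebesgue, so $\nu$ is also singular with respect to Lebesgue. The only measure that is simultaneously absolutely continuous and singular is the zero measure, so $\nu = 0$. Hence $f = 0$ $\mu$-a.e., contradicting the choice of $f$, and we conclude $V = L^{2}(\mathbb{R}, \mu)$.

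The only point that requires a bit of care is that Theorem \ref{Forelli} is stated for a ``finite Borel measure,'' whereas $\nu = f\, d\mu$ is genuinely complex since $f \in L^{2}(\mu)$. This is standard bookkeeping for F.\ and M.\ Riesz type extensions --- either the proof of Theorem \ref{Forelli} goes through verbatim for complex measures, or one decomposes $\nu$ into its four positive Jordan parts (real/imaginary, positive/negative) and applies Theorem \ref{Forelli} to each piece --- and I do not anticipate it being a substantive obstacle. Everything else reduces to the Hilbert space projection theorem and the elementary fact that a measure which is both absolutely continuous and singular with respect to Lebesgue must vanish.
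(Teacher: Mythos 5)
Your proposal is correct and is essentially the paper's own argument: take $f$ orthogonal to the span, apply Theorem \ref{Forelli} to $f\,d\mu$ to get absolute continuity, and conclude from singularity (via the Lebesgue decomposition) that $f\,d\mu=0$. The two points you flag explicitly --- that $a^{-1},b^{-1}$ inherit linear independence over $\mathbb{Z}$, and that Forelli's theorem must be applied to a complex measure --- are glossed over in the paper's proof but are handled correctly and are not substantive obstacles.
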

\begin{proof}
    Suppose that for some $f\in L^{2}(\mu)$
    $$\langle f,e^{2\pi i (a^{-1}nx+b^{-1}mx)}\rangle= \int_{\mathbb{R}}e^{-2\pi i(a^{-1}nx+b^{-1}mx)}fd\mu=0$$
     for all $n,m\in \mathbb{N}$,
    then by Theorem \ref{Forelli}, $fd\mu$ is an absolutely continuous measure so that by the Lebesgue Decomposition Theorem, $fd\mu$ is the zero measure, which implies $f=0$ $\mu$ almost everywhere.
\end{proof}
Now in order prove our Fourier expansion result for the real line, we use the effective sequence result of Kwapien and Mycielski.
\begin{thm}\label{Reffective}
For a Borel probability measure $\mu$ on $\mathbb{R}$ and $a> 0$,
$\{e^{2\pi i a^{-1}nx}\}_{n=0}^{\infty}$ is effective in
$\overline{span}\{e^{2\pi i a^{-1} nx}: n\in \mathbb{N}\}$ if and only if the measure $v$ defined on Borel sets of $[0,a)$ as follows:
$$v(E)=\sum_{n=-\infty}^{\infty}\mu(E+an)$$
is singular or normalized Lebesgue measure. In particular if $\mu$ is singular,
$v$ is singular.
\end{thm}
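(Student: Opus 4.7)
The plan is to invoke the Kwapień--Mycielski dichotomy (Theorem \ref{KM}) applied to the sequence $e_n := e^{2\pi i a^{-1}nx}$ inside $H := \overline{\operatorname{span}}\{e_n : n \in \mathbb{N}\} \subseteq L^2(\mu)$, where by construction the sequence is complete. Since $\mu$ is a probability measure, each $e_n$ is a unit vector, and the inner product
\begin{equation*}
\langle e_n, e_k \rangle_{L^2(\mu)} = \int_{\mathbb{R}} e^{-2\pi i a^{-1}(k-n)x}\,d\mu(x)
\end{equation*}
depends only on $k-n$, so $\{e_n\}$ is stationary. Theorem \ref{KM} then reduces the problem to identifying the spectral measure of this sequence and deciding when it is singular with respect to Lebesgue measure or equals normalized Lebesgue measure on $[0,1)$.

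To identify the spectral measure, I would exploit $a$-periodicity of each $e^{-2\pi i a^{-1} k x}$ to fold $\mu$ down via the quotient map $\pi : \mathbb{R} \to [0,a)$, $\pi(x) = x \bmod a$. By construction the pushforward $\pi_{*}\mu$ is precisely the measure $v$ in the statement, so
\begin{equation*}
\langle e_0, e_k\rangle = \int_{\mathbb{R}} e^{-2\pi i a^{-1}kx}\,d\mu(x) = \int_0^a e^{-2\pi i a^{-1}kx}\,dv(x) = \int_0^1 e^{-2\pi i k y}\,d\tilde{v}(y),
\end{equation*}
where $\tilde{v}$ is the probability measure on $[0,1)$ defined by $\tilde{v}(F) = v(aF)$. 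By uniqueness of Fourier--Stieltjes coefficients, $\tilde{v}$ is the spectral measure of $\{e_n\}$. The affine rescaling $y \mapsto ay$ sends Lebesgue null sets to Lebesgue null sets and normalized Lebesgue measure $dy$ on $[0,1)$ to $a^{-1}dx$ on $[0,a)$, so $\tilde{v}$ is singular or normalized Lebesgue iff $v$ is. Combined with Theorem \ref{KM}, this yields the stated equivalence.

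For the ``in particular'' assertion, suppose $\mu$ is singular and let $N \subseteq \mathbb{R}$ be a Borel set of Lebesgue measure zero with $\mu(N) = 1$. Translating each piece $N \cap [an, a(n+1))$ back into $[0,a)$ and taking the union produces a Borel set $M \subseteq [0,a)$ of Lebesgue measure zero with $\pi^{-1}(M) \supseteq N$, so $v(M) = \mu(\pi^{-1}(M)) \geq \mu(N) = 1$, forcing $v(M) = 1$. Hence $v$ is singular.

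The main obstacle is a bookkeeping one rather than a conceptual one: one must track carefully that the affine change of variables between $[0,a)$ and $[0,1)$ preserves both horns of the Kwapień--Mycielski dichotomy (singularity and the specific normalization of Lebesgue measure), so that the folded measure $v$ appearing in the statement corresponds exactly to the spectral measure and not some rescaled variant that could distort the dichotomy.
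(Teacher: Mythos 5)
Your proposal is correct and follows essentially the same route as the paper: both identify the spectral measure of the stationary unit-vector sequence $\{e^{2\pi i a^{-1}nx}\}$ with the folded measure $v$ and invoke Theorem \ref{KM}, and both prove the ``in particular'' claim by translating the pieces of a $\mu$-null, Lebesgue-co-null set into $[0,a)$ (you take a union of null pieces where the paper takes an intersection of co-null pieces, which is the same argument). Your explicit check that the affine rescaling between $[0,a)$ and $[0,1)$ preserves both horns of the dichotomy is a detail the paper glosses over, and is a welcome addition.
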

\begin{proof}
Note $\{{e^{2\pi i a^{-1}kx}}\}_{k=0}^{\infty}$ is a stationary sequence of unit vectors. Furthermore for each $k$, 
$$\langle 1,e^{2\pi i a^{-1}kx}\rangle_{\mu} =\int_{0}^{a}e^{-2\pi i a^{-1}kx}dv.$$
The first result follows from the Theorem \ref{KM}.

Now if $\mu$ is singular there is Borel set $A\subseteq \mathbb{R}$ such that
$\mu(A)=0=|A^{c}|.$
See
$\bigcap_{k=-\infty}^{\infty}[A\cap [ak,a(k+1))-ak]$ is $v$ measure zero, and for each $k$,
$$|[A\cap [ak,a(k+1))]-ak)|=|A\cap [ak,a(k+1))|=a-|A^{c}\cap [ak,a(k+1)))|=a.$$
Then $\bigcap_{k=-\infty}^{\infty}[A\cap [ak,a(k+1))-ak]$ has Lebesgue measure $a$, and $v$ is singular.

\end{proof}
We do have a completeness issue for the case of the real line from needing a lattice of frequencies, but to fix this, we consider a special case where we do have completeness of one frequency of exponential functions.
\begin{cor}\label{Z}
Suppose that $\mu$ is a Borel probability measure supported on $a+b\mathbb{Z}$ for some $a,b\in \mathbb{R}$. Let $c>0$ and not a rational multiple of $b$. 
Then for every $f\in L^{2}(\mu)$,
$$f=\sum_{n=0}^{\infty}\langle f, g_{n}\rangle e^{2\pi i c^{-1}nx}$$
where $\{g_{n}\}$ is the auxiliary sequence associated with $\{e^{2\pi i c^{-1}nx}\}$ and $L^{2}(\mu)$.
\end{cor}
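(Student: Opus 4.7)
The plan is to combine two tools already in the paper: Theorem~\ref{Reffective} provides effectiveness of the sequence $\{e^{2\pi i c^{-1}nx}\}_{n\ge 0}$ inside its closed linear span in $L^{2}(\mu)$, and Corollary~\ref{2complete} will be used to show that this closed span is in fact all of $L^{2}(\mu)$. I will first dispose of the trivial case $b=0$, where $\mu=\delta_{a}$ and the claim reduces to a one-dimensional computation ($g_{0}=1$ and all higher $g_{n}$ vanish in $L^{2}(\mu)$), and then assume $b>0$ without loss of generality (since $a+b\mathbb{Z}=a+|b|\mathbb{Z}$).

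For the effectiveness step, the crucial observation is that $\mu$ is singular: its support $a+b\mathbb{Z}$ is countable, hence of Lebesgue measure zero. Applying Theorem~\ref{Reffective} with its parameter $a$ replaced by our $c$, the ``in particular'' clause gives that the folded measure $v(E)=\sum_{n}\mu(E+cn)$ on $[0,c)$ is singular, so $\{e^{2\pi i c^{-1}nx}\}_{n\ge 0}$ is effective in
$$V:=\overline{\mathrm{span}}\{e^{2\pi i c^{-1}nx}:n\in\mathbb{N}\}\subseteq L^{2}(\mu).$$
By the definition of effectiveness this already produces the desired expansion for every $f\in V$.

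For the completeness step, I will show $V=L^{2}(\mu)$ by invoking Corollary~\ref{2complete} with the pair $(c,b)$. Since $c/b$ is irrational, $c$ and $b$ are linearly independent over $\mathbb{Z}$, so Corollary~\ref{2complete} gives
$$\overline{\mathrm{span}}\{e^{2\pi i(c^{-1}n+b^{-1}m)x}:n,m\in\mathbb{N}\}=L^{2}(\mu).$$
The key point is that on $\mathrm{supp}(\mu)\subseteq a+b\mathbb{Z}$ one has $e^{2\pi i b^{-1}m(a+bk)}=e^{2\pi i b^{-1}ma}$ for every $k\in\mathbb{Z}$, so each exponential $e^{2\pi i b^{-1}mx}$ collapses $\mu$-a.e.\ to the constant $e^{2\pi i b^{-1}ma}$. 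Consequently each generator of the two-frequency span is, in $L^{2}(\mu)$, a scalar multiple of some $e^{2\pi i c^{-1}nx}$, forcing the two-frequency span into $V$; combined with Corollary~\ref{2complete} this yields $V=L^{2}(\mu)$, and the corollary follows by merging this with the effectiveness of the previous step.

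The main obstacle I anticipate is precisely the completeness step. A direct attempt to recover $f$ on the atoms of $\mu$ from only the one-sided moments $\int e^{-2\pi i c^{-1}nx}f\,d\mu$ would amount to a Fourier uniqueness problem for a Laurent-type series evaluated along an irrational rotation $k\mapsto e^{2\pi i c^{-1}bk}$ of the circle, which is delicate because the indices $n$ range only over $\mathbb{N}$. The route through Corollary~\ref{2complete} sidesteps this entirely: it buys completeness from a genuinely two-frequency theorem (Forelli), and the arithmetic structure of $\mathrm{supp}(\mu)$ renders the second frequency redundant.
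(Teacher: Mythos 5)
Your proposal is correct and follows essentially the same route as the paper: the paper likewise observes that $e^{2\pi i b^{-1}x}$ is constant $\mu$-almost everywhere so that Corollary~\ref{2complete} collapses to completeness of $\overline{\mathrm{span}}\{e^{2\pi i c^{-1}nx}:n\in\mathbb{N}\}$, and then invokes Theorem~\ref{Reffective} for effectiveness. Your additional care about the degenerate case $b=0$, the reduction to $b>0$, and the explicit remark that $\mu$ is singular because its support is countable are all sound refinements of the same argument.
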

\begin{proof}
Notice $$e^{2\pi i b^{-1}x}=e^{2\pi i b^{-1}a}$$ $\mu$ almost everywhere.
Then by Corollary \ref{2complete},
$$L^{2}(\mu)=\overline{span}\{e^{2\pi i c^{-1}nx}: n\in \mathbb{N}\}$$
where $c$ not a rational multiple of $b$.
The result follows from Theorem \ref{Reffective}.
\end{proof}
With this frame-like expansion for these discrete measures, a natural question one asks is if these expansions come from weighted Fourier frames since it is clear that these discrete measures can't have true Fourier frames. This next proposition shows this is never the case.
\begin{prop}
Let $\mu$ be a finite Borel probability measure on $2\pi \mathbb{Z}$ whose support is not bounded. For any $\{c_{n}\}\in \ell^{2}(\mathbb{N})$ with all positive entries,
$\{c_{n}e^{2\pi i nx}\}_{n=0}^{\infty}$ is a complete Bessel sequence in $L^{2}(\mu)$ and never a frame.
\end{prop}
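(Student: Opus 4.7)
\medskip
\noindent\textbf{Proof proposal.} The plan is to dispatch the three claims (complete, Bessel, not a frame) in turn, with completeness reduced to a direct application of Corollary \ref{Z}, the Bessel bound coming from Cauchy--Schwarz together with the fact that $\mu$ is a probability measure, and failure of the lower bound coming from the atoms of $\mu$ near infinity.

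\medskip
\noindent\textbf{Step 1: Completeness.} Since $\mu$ is supported on $2\pi\mathbb{Z}=0+2\pi\mathbb{Z}$, I would invoke Corollary \ref{Z} with $a=0$, $b=2\pi$, and $c=1$. The hypothesis requires $c$ to not be a rational multiple of $b$, i.e.\ $1\notin \mathbb{Q}\cdot 2\pi$, which holds since $\pi$ is irrational. Corollary \ref{Z} then gives a frame-like Fourier expansion in $L^{2}(\mu)$ using $\{e^{2\pi inx}\}_{n=0}^{\infty}$; in particular this sequence is complete. Because every $c_n>0$, scaling term-by-term preserves the closed linear span, so $\{c_n e^{2\pi inx}\}_{n=0}^{\infty}$ is also complete in $L^{2}(\mu)$.

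\medskip
\noindent\textbf{Step 2: Bessel bound.} Since $\mu$ is a probability measure, $\|e^{2\pi inx}\|_{L^{2}(\mu)}=1$ for every $n$, so Cauchy--Schwarz yields $|\langle f,e^{2\pi inx}\rangle_{\mu}|\le \|f\|_{\mu}$. Therefore, for any $f\in L^{2}(\mu)$,
\[
\sum_{n=0}^{\infty}|\langle f,c_n e^{2\pi inx}\rangle_{\mu}|^{2}=\sum_{n=0}^{\infty}c_n^{2}|\langle f,e^{2\pi inx}\rangle_{\mu}|^{2}\le \|f\|_{\mu}^{2}\sum_{n=0}^{\infty}c_n^{2},
\]
which is finite because $\{c_n\}\in\ell^{2}(\mathbb{N})$. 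So the sequence is Bessel with bound $\sum_n c_n^{2}$.

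\medskip
\noindent\textbf{Step 3: Failure of a lower frame bound.} Here the unboundedness of the support is the key hypothesis. Because $\mu$ is a probability measure living on the countable set $2\pi\mathbb{Z}$ whose support is unbounded, there is an infinite sequence of atoms $\{2\pi k_j\}_{j=1}^{\infty}\subseteq\operatorname{supp}(\mu)$; since $\sum_j \mu(\{2\pi k_j\})\le 1$, one has $\mu(\{2\pi k_j\})\to 0$. For each $j$, set $f_j=\chi_{\{2\pi k_j\}}/\sqrt{\mu(\{2\pi k_j\})}$, a unit vector in $L^{2}(\mu)$. A direct computation gives
\[
|\langle f_j,c_n e^{2\pi inx}\rangle_{\mu}|^{2}=c_n^{2}\,\mu(\{2\pi k_j\}),
\]
so summing in $n$,
\[
\sum_{n=0}^{\infty}|\langle f_j,c_n e^{2\pi inx}\rangle_{\mu}|^{2}=\mu(\{2\pi k_j\})\sum_{n=0}^{\infty}c_n^{2}\longrightarrow 0
\]
as $j\to\infty$. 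Since $\|f_j\|_{\mu}=1$, no $A>0$ can satisfy the lower frame inequality, so $\{c_n e^{2\pi inx}\}$ is not a frame.

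\medskip
\noindent\textbf{Anticipated difficulty.} None of the three steps is delicate in isolation; the only subtlety worth flagging is verifying the hypothesis of Corollary \ref{Z} (the irrationality of $\pi$ delivering $1\notin \mathbb{Q}\cdot 2\pi$), and being careful that ``support not bounded'' is precisely what forces the atomic masses to decay to $0$, which is what kills the would-be lower bound.
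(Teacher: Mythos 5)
Your proposal is correct and follows essentially the same route as the paper: completeness via the preceding corollary, a Bessel bound of $\sum_n c_n^2$ (the paper gets it from the triangle inequality and $\|f\|_{L^1(\mu)}\le\|f\|_{L^2(\mu)}$, you from Cauchy--Schwarz --- the same estimate), and failure of the lower bound by testing against normalized point masses $\chi_{\{2\pi k_j\}}/\sqrt{\mu(\{2\pi k_j\})}$, exactly as in the paper. Your Step 3 is in fact slightly more careful than the paper's, since you make explicit that unbounded support forces infinitely many atoms whose masses must decay to $0$.
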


\begin{proof}
Suppose $f\in L^{2}(\mu)$ and let the weights for the measure $\mu$ be $\{a_{n}\}_{n\in \mathbb{N}}$ under the usual ordering of $\mathbb{Z}$. We have

$$\sum_{k}|\sum_{n}a_{n}f(n)c_{k}e^{-2\pi i kn}|^{2}\leq \sum_{k}c_{k}^{2}[\sum_{n}a_{n}|f(n)|]^{2}=||\{c_{k}\}||_{\ell^{2}(\mathbb{N})}^{2}||f||_{L^{1}(\mu)}^{2}$$
$$\leq ||\{c_{k}\}||_{\ell^{2}(\mathbb{N})}^{2}||f||_{L^{2}(\mu)}^{2}.$$ This shows that $\{c_{n}e^{2\pi i nx}\}_{n=0}^{\infty}$ is a complete Bessel sequence in $L^{2}(\mu)$ since we already know the sequence is complete.

Now to show $\{c_{n}e^{2\pi i nx}\}_{n=0}^{\infty}$ does not have a lower frame bound,
consider the following sequence of norm one vectors:
$\{f_{n}\}=\{\frac{1}{\sqrt{a_{n}}}\chi_{\{n\}}\}$. We assume $a_{n}\neq 0$ for all $n$, otherwise we take a subsequence. We have
$$\sum_{k=0}^{\infty}|\langle f_{n},c_{k}e^{2\pi i kx}\rangle|^{2}=a_{n}||\{c_{k}\}||_{\ell^{2}(\mathbb{N})}^{2}\to 0$$ as $n\to \infty$.

\end{proof}

Now for our expansion result, we use the two dimensional result from Theorem \ref{2dlemma} and expand the coefficients using Corollary \ref{Z} and Theorem \ref{rok}.
\begin{cor}\label{intervalandz}
Let $\mu$ be a Borel probability measure on $[0,1)\times \mathbb{Z}$ that is $\mathbb{Z}$-slice singular. Then for all $f\in L^{2}(\mu)$,
$$f(x,k)=\sum_{n=0}^{\infty}\sum_{m=0}^{\infty}\langle f, g_{m}(k)g_{n}^{k}(x)\rangle_{\mu} e^{imk}e^{2\pi i nx}$$
where $g_{m}(k)$ is the auxiliary sequence associated with the marginal measure on $\mathbb{Z}$ and $\{e^{imk}\}$.

\end{cor}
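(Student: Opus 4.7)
The plan is to iterate two one-dimensional expansions that are already in hand and then identify the resulting scalar coefficients via the Rokhlin disintegration. First, since $\mu$ is $\mathbb{Z}$-slice singular on $[0,1)\times\mathbb{Z}$, Theorem \ref{2dlemma} yields
$$f(x,k)=\sum_{n=0}^{\infty} h_n(k)\, e^{2\pi i n x},\qquad h_n(k):=\langle f(\cdot,k),g_n^k\rangle_{L^2(\gamma^k)},$$
with convergence in $L^2(\mu)$ and with each $h_n$ lying in $L^2(\mu_k)$, where $\mu_k$ denotes the marginal measure on $\mathbb{Z}$.

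Second, note that $\mu_k$ is a Borel probability measure supported on $0+1\cdot\mathbb{Z}$, so taking $c=2\pi$ (which is not a rational multiple of $b=1$ because $\pi$ is irrational) in Corollary \ref{Z} gives, for each fixed $n$,
$$h_n(k)=\sum_{m=0}^{\infty}\langle h_n,g_m\rangle_{L^2(\mu_k)}\,e^{imk},$$
with convergence in $L^2(\mu_k)$, where $\{g_m\}$ is the auxiliary sequence of $\{e^{imk}\}_{m=0}^{\infty}$ in $L^2(\mu_k)$. Unfolding the $L^2(\mu_k)$ inner product and applying Theorem \ref{rok},
$$\langle h_n,g_m\rangle_{L^2(\mu_k)}=\int_{\mathbb{Z}}\overline{g_m(k)}\int_{[0,1)} f(x,k)\,\overline{g_n^k(x)}\,d\gamma^k(x)\,d\mu_k(k)=\langle f,\,g_m(k)\,g_n^k(x)\rangle_\mu,$$
which is exactly the scalar coefficient appearing in the target formula.

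Third, to assemble these expansions into an $L^2(\mu)$ equality, observe that the $m$-partial sum $S_{n,M}(k):=\sum_{m=0}^{M}\langle f,g_m g_n^k\rangle_\mu\,e^{imk}$ depends only on $k$, so Rokhlin together with $|e^{2\pi i n x}|=1$ gives
$$\bigl\|\,[h_n(k)-S_{n,M}(k)]\,e^{2\pi i n x}\bigr\|_{L^2(\mu)}^2=\|h_n-S_{n,M}\|_{L^2(\mu_k)}^2\longrightarrow 0$$
as $M\to\infty$. Thus for each fixed $n$ the inner $m$-sum converges in $L^2(\mu)$ to $h_n(k)e^{2\pi i n x}$, and combining this with the outer expansion from the first step (first summing in $m$, then in $n$) produces the claimed double expansion.

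The main point to flag is that the double sum must be read as an \emph{iterated} sum (inner $m$-sum first, outer $n$-sum second); without further hypotheses one cannot expect joint convergence over rectangles $[0,N]\times[0,M]$. A secondary technicality is the joint measurability of $g_n^k(x)$ in $(x,k)$ needed to ensure $g_m(k)\,g_n^k(x)\in L^2(\mu)$, but this is inherited from the construction underlying Theorem \ref{2dlemma}.
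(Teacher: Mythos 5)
Your proof is correct and follows exactly the route the paper indicates for this corollary: apply Theorem \ref{2dlemma} to get the outer expansion, expand each coefficient function $h_n\in L^2(\mu_k)$ via Corollary \ref{Z} (with $b=1$, $c=2\pi$), and identify the scalar coefficients through the Rokhlin disintegration. You supply details the paper leaves implicit --- the verification that $2\pi$ is not a rational multiple of $1$, the transfer of the inner $m$-sum's convergence from $L^2(\mu_k)$ to $L^2(\mu)$ using $|e^{2\pi inx}|=1$ and the fact that each $\gamma^k$ is a probability measure, and the observation that the double sum must be read as iterated --- all of which are accurate and consistent with how the subsequent theorem in the paper interprets the limits.
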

This result may seem random at first, but we will now show that singular measures on $\mathbb{R}$ are morally the same as certain $\mathbb{Z}$ slice singular measures. What follows is the statement of existence of frame-like expansions for singular Borel measures on the real line.
\begin{thm}
For any singular Borel probability measure $\mu$ on $\mathbb{R}$, there is a Parseval frame $\{c_{mn}\}_{n,m=0}^{\infty}\subseteq L^{2}(\mu)$ such that for any $f\in L^{2}(\mu)$,

$$f(y)=\sum_{n=0}^{\infty}\sum_{m=0}^{\infty}\langle f, c_{nm}\rangle_{\mu}e^{im \lfloor y \rfloor}e^{2\pi i ny}$$ where the right limit in taken in $L^{2}(\mu)$ first for each $n$, then the left limit is taken in $L^{2}(\mu)$.
\end{thm}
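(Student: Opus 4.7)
The plan is to reduce the statement to Corollary \ref{intervalandz} via the Borel bijection $T\colon \mathbb{R} \to [0,1) \times \mathbb{Z}$ defined by $T(y) = (y - \lfloor y \rfloor, \lfloor y \rfloor)$. Let $\tilde{\mu} := T_* \mu$; the induced map $f \mapsto \tilde{f} := f \circ T^{-1}$ is an isometric isomorphism $L^2(\mu) \to L^2(\tilde{\mu})$, so it suffices to produce the desired Parseval frame expansion for $\tilde{f}$ on the product side and pull back.

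First I would verify that $\tilde{\mu}$ is $\mathbb{Z}$-slice singular. With the canonical Rokhlin disintegration along the $\mathbb{Z}$-coordinate, the marginal $\tilde{\mu}_\mathbb{Z}$ gives each integer $k$ the weight $p_k := \mu([k,k+1))$, and whenever $p_k > 0$ the slice $\gamma^k$ is $p_k^{-1}$ times the push-forward of $\mu|_{[k,k+1)}$ under translation by $-k$. Since $\mu$ is singular, any Lebesgue-null Borel set $A$ with $\mu(A^c)=0$ produces the Lebesgue-null set $(A\cap[k,k+1))-k\subseteq [0,1)$ of full $\gamma^k$-measure, so each such $\gamma^k$ is singular and $\tilde\mu$ is $\mathbb{Z}$-slice singular.

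Now I would apply Corollary \ref{intervalandz} to $\tilde f$, obtaining
$$\tilde{f}(x,k)=\sum_{n=0}^{\infty}\sum_{m=0}^{\infty}\langle \tilde{f},g_m(k)g_n^k(x)\rangle_{\tilde{\mu}}\,e^{imk}\,e^{2\pi i nx}.$$
Setting $c_{nm}(y):=g_m(\lfloor y\rfloor)\,g_n^{\lfloor y\rfloor}(y-\lfloor y\rfloor)$ and pulling back through $T$, the key simplification is that $e^{2\pi i n x}=e^{2\pi i n(y-\lfloor y\rfloor)}=e^{2\pi i n y}$ because $n\lfloor y\rfloor\in\mathbb{Z}$. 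This transforms the expansion into the required $f(y)=\sum_n\sum_m \langle f,c_{nm}\rangle_\mu\, e^{im\lfloor y\rfloor}\,e^{2\pi i n y}$, and the order of convergence already matches the statement (inner $m$-sum first for each $n$).

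For the Parseval frame claim I would iterate Parseval across the disintegration. Each $\{g_n^k\}_n$ is a Parseval frame for $L^2(\gamma^k)$ by Theorem \ref{KM} applied to the singular $\gamma^k$, and $\{g_m\}_m$ is a Parseval frame for $L^2(\tilde{\mu}_\mathbb{Z})$ because the folded measure on $[0,2\pi)$ from Theorem \ref{Reffective} is supported on the countable set $\{k\bmod 2\pi:k\in\mathbb{Z}\}$, which is Lebesgue null. Theorem \ref{rok} then yields
$$\|f\|_\mu^2=\int_{\mathbb{Z}}\sum_{n}\bigl|\langle \tilde{f}(\cdot,k),g_n^k\rangle_{\gamma^k}\bigr|^2\,d\tilde{\mu}_\mathbb{Z}(k)=\sum_{n}\sum_{m}\bigl|\langle f,c_{nm}\rangle_\mu\bigr|^2.$$
The main obstacle I anticipate is the bookkeeping required to push the Parseval identities through the disintegration: one must confirm both that $k\mapsto\langle \tilde f(\cdot,k),g_n^k\rangle_{\gamma^k}$ genuinely lies in $L^2(\tilde{\mu}_\mathbb{Z})$ (implicit in Theorem \ref{2dlemma}) and that the $c_{nm}$ are honest elements of $L^2(\mu)$ after pull-back, so that the second Parseval step is legitimate.
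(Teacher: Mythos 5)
Your proposal is correct and follows essentially the same route as the paper: both fold $\mu$ onto $[0,1)\times\mathbb{Z}$ via $y\mapsto(y-\lfloor y\rfloor,\lfloor y\rfloor)$, verify $\mathbb{Z}$-slice singularity of the push-forward through the explicit Rokhlin disintegration, invoke Corollary \ref{intervalandz}, pull back through the induced isometry (your explicit formula $c_{nm}(y)=g_{m}(\lfloor y\rfloor)g_{n}^{\lfloor y\rfloor}(y-\lfloor y\rfloor)$ is exactly the paper's $T^{*}(g_{m}(k)g_{n}^{k}(x))$), and establish the Parseval bound by iterating Parseval identities across the disintegration. The only difference is cosmetic, and your closing remark about checking that $k\mapsto\langle\tilde f(\cdot,k),g_{n}^{k}\rangle_{\gamma^{k}}$ lies in $L^{2}(\tilde\mu_{\mathbb{Z}})$ is a point the paper leaves implicit via Theorem \ref{2dlemma}.
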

Notice that in this Theorem, $\{c_{mn}\}_{n,m=0}^{\infty}$ is not a exactly dextrodual to $\{e^{im \lfloor y \rfloor}e^{2\pi i ny}\}$, but this expansion is Fourier frame-like as it shows $f$ has a Fourier dextrodual expansion on each $[k,k+1)$. More precisely, there is sequence $\{d_{kn}\}\subseteq L^{2}(\mu)$ where $$\sum_{k\in \mathbb{Z}}\int_{[k,k+1)}|f(y)-\sum_{n=0}^{M}\langle f,d_{kn}\rangle_{\mu}e^{2\pi i ny}|^{2}d\mu \to 0$$ as $M\to \infty.$
\begin{proof}
Define a $\mathbb{Z}$ slice singular Borel probability measure $\sigma$ on $[0,1)\times \mathbb{Z}$ as follows:
$$\sigma(B)=\sum_{n\in \mathbb{Z}}\mu(B_{n}+n)$$
where $B_{n}$ is the $n$-slice of $B$.
To see that $\sigma$ is $\mathbb{Z}$ slice singular, one can see that with the Rokhlin distintegration, the marginal measure $\mu_{n}$ on $\mathbb{Z}$ is $\sum_{n}\mu([n,n+1))\delta_{n}$ and the slice measures on $[0,1)$ indexed by $\mathbb{Z}$ are defined by $\gamma^{n}(B)=\frac{\mu(B+n)}{\mu([n,n+1))},$ which is defined for $\mu_{n}$ almost every $n$ and singular for $\mu_{n}$ almost every $n$.

Now let $T: L^{2}(\mu,\mathbb{R})\to L^{2}(\sigma,[0,1)\times \mathbb{Z})$ where
$$T(f(y))= F(x,k)=f(x +k).$$
This map is an isometry because for all $f\in L^{2}(\mu)$,
$$\int_{\mathbb{Z}}\int_{[0,1)}|f(x+k)|^{2}d\gamma^{k}d\mu_{k}=\sum_{k: \mu([k,k+1))>0}\int_{[0,1)}|f(x+k)|^{2}d[\mu+k]=||f||_{L^{2}(\mu)}^{2},$$
and for any finite sequence $\{a_{nm}\}$,
$$T(\sum_{n}\sum_{m}a_{nm}e^{im\lfloor y\rfloor}e^{2\pi i ky})= \sum_{n}\sum_{m}a_{nm}e^{imk}e^{2\pi i kx}.$$
Then for $f(y)\in L^{2}(\mu)$ as a result of Corollary \ref{intervalandz},
$$f(y)=\sum_{n=0}^{\infty}\sum_{m=0}^{\infty}\langle f,T^{*}(g_{m}(k)g_{n}^{k}(x))\rangle_{\mu}e^{im \lfloor y \rfloor}e^{2\pi i ny}.$$

Now to show $\{T^{*}(g_{m}(k)g_{n}^{k}(x))\}$ is a Parseval frame,
consider the following for any $f\in L^{2}(\mu)$:
\begin{equation}
\begin{split}
\sum_{n}\sum_{m}|\langle T(f),g_{m}(k)g_{n}^{k}(x)\rangle_{\sigma}|^{2} & =\sum_{n}\sum_{m}|\langle \langle T(f),g_{n}^{k}(x)\rangle_{\gamma^{k}},g_{m}(k)\rangle_{\mu_{k}}|^{2} \\
& =
\sum_{n}||\langle T(f),g_{n}^{k}\rangle_{\gamma^{k}}||_{\mu_{k}}^{2} \\
& =\int_{\mathbb{Z}}\sum_{n}|\langle T(f),g_{n}^{k}\rangle_{\gamma^{k}}|^{2}d\mu_{k}\\ 
& =\int_{\mathbb{Z}}||T(f)||_{\gamma^{k}}^{2}d\mu_{k}\\
&=
||f||_{\mu}^{2}. 
\end{split}
\end{equation}
This computation follows by the Rokhlin disintegration theorem and since $\{g_{m}\}$ is a Parseval frame in $L^{2}(\mu_{k})$ and $\{g_{n}^{k}\}$ is a Parseval frame in $L^{2}(\gamma^{k})$ for $\mu_{k}$ almost every $k$.
\end{proof}

\section{Subspaces of $h^{2}(\mathbb{D})$ generated by Bessel sequences.}
We continue our discussion by considering Bessel sequences and their connection to subspaces of harmonic functions on the unit disk, including Bessel sequences in $L^{2}(\mu)$ that are dextrodual to $\{e^{2\pi inx}\}$.
In particular, we close this section by showing that if $\mu$ has the property from equation (\ref{prp}), then all $f\in L^{2}(\mu)$ are $L^{2}(\mu)$ boundaries of harmonic functions whose coefficients come from the dextrodual sequence.

\begin{defn}
Recall one can define $h^{2}(\mathbb{D})$ as the set of complex valued harmonic functions on the disk whose power series coefficients are square summable.
Also, recall that $H^{2}(\mathbb{D})$, the set of functions in $h^{2}(\mathbb{D})$ that are analytic, is a closed subspace of $h^{2}(\mathbb{D})$.

Let $\{g_{n}\}_{n=-\infty}^{\infty}$ be a Bessel sequence in Hilbert space $H$.
Define $A_{g_{n}}: H\to h^{2}(\mathbb{D})$ where
$$A_{g_{n}}(f)=\sum_{n=-\infty}^{\infty}\langle f,g_{n}\rangle r^{|n|}e^{2\pi i n\theta}$$ and $H(g_{n})=Im(A_{g_{n}}).$
\end{defn}

\subsection{Correspondence with model spaces}
We will discuss specifically connections between $H(g_{n})$ and models spaces, and we will write $\{g_{n}\}_{n=0}^{\infty}$ to denote $\{g_{n}\}_{n=0}^{\infty}\cup \{0\}_{n=-\infty}^{-1}$. We review the normalized Cauchy transform here:

\begin{defn}
The \textbf{Cauchy transform} of a finite Borel measure $\mu$ on $[0,1)$, we denote $C_{\mu}$ is a map from $L^{2}(\mu)$ to analytic functions on the unit disk given by
$$C_{\mu}(f)=\int_{0}^{1}\frac{fd\mu}{1-ze^{-2\pi i x}}=\sum_{n=0}^{\infty}\langle f, e^{2\pi inx}\rangle z^{n}$$ and the \textbf{normalized Cauchy transform} for $\mu$, denoted $V_{\mu}$ is given by
$$V_{\mu}(f)=\frac{C_{\mu}(f)}{C_{\mu}(1)}=\sum_{n=0}^{\infty}\langle f, h_{n}\rangle z^{n}$$ where $h_{n}\in L^{2}(\mu)$ are in $span\{e^{2\pi i kx}\}$ for each $n$.
\end{defn}
By work of Sarason \cite{Sarason1994Sub-Hardy}, when $\mu$ is singular $V_{\mu}$ is an isometry onto $$H(b)=H^{2}(\mathbb{D})\ominus b(z)H^{2}(\mathbb{D})$$ where $b(z)$ is the inner function corresponding to $\mu$ via the Herglotz theorem.
Furthermore, the model spaces $H(b)$ are exactly the proper backward-shift-invariant subspaces of $H^{2}(\mathbb{D})$ by work of Beurling \cite{Beurling1948Two}.
Our main result for when $H(g_{n})$ is a model space follows, which is also discussed by Christensen in \cite{Christensen2020Frame}.
\begin{prop}\label{classify}
If $\{g_{n}\}_{n=0}^{\infty}$ is a frame and not a Riesz basis, then $H(g_{n})=H(b)$ for some inner function
$b(z)$ if and only if there exists a bounded operator that maps $g_{n}$ to $g_{n+1}$ for all $n$.
\end{prop}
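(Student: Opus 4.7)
The plan is to realize $H(g_{n})$ as a closed proper subspace of $H^{2}(\mathbb{D})$ and then apply Beurling's theorem, converting the inner-function condition into backward-shift-invariance, and then converting that invariance into the existence of the shifting operator on $\{g_{n}\}$.

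First I would observe that because $g_{n}=0$ for $n<0$, the analysis operator $A_{g_{n}}$ actually lands in $H^{2}(\mathbb{D})$, and under the canonical identification $H^{2}\cong \ell^{2}(\mathbb{N})$ the frame bounds give $A\|f\|^{2}\le \|A_{g_{n}}f\|_{H^{2}}^{2}\le B\|f\|^{2}$. Hence $A_{g_{n}}$ is bounded, injective, and bounded below, so $H(g_{n})=\mathrm{Im}(A_{g_{n}})$ is a closed subspace of $H^{2}$ and $A_{g_{n}}:H\to H(g_{n})$ is a topological isomorphism with bounded inverse. Because $\{g_{n}\}$ is a frame but not a Riesz basis, its analysis operator is not surjective onto $\ell^{2}$, so $H(g_{n})$ is a \emph{proper} closed subspace of $H^{2}$. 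Beurling's theorem then says that $H(g_{n})=H(b)$ for some inner $b$ if and only if $H(g_{n})$ is invariant under the backward shift $S^{*}$ on $H^{2}$.

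Next I would prove the equivalence of $S^{*}$-invariance of $H(g_{n})$ with the operator condition on $\{g_{n}\}$. If $T\in B(H)$ satisfies $Tg_{n}=g_{n+1}$ for all $n\ge 0$, then $\langle f,g_{n+1}\rangle=\langle T^{*}f,g_{n}\rangle$, so $S^{*}A_{g_{n}}(f)=A_{g_{n}}(T^{*}f)\in H(g_{n})$, giving $S^{*}$-invariance. Conversely, if $H(g_{n})$ is $S^{*}$-invariant, then by the first step the operator $U:=A_{g_{n}}^{-1}S^{*}A_{g_{n}}$ is well-defined and bounded on $H$; unwinding coefficients yields $\langle Uf,g_{n}\rangle=\langle f,g_{n+1}\rangle$ for all $f$ and all $n$, hence $U^{*}g_{n}=g_{n+1}$, and $T:=U^{*}$ is the required bounded operator.

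The only delicate point I foresee is verifying that the frame lower bound together with the failure of the Riesz basis property is exactly what forces $H(g_{n})$ to be a proper closed subspace of $H^{2}$: this uses the standard duality that surjectivity of the analysis operator onto $\ell^{2}$ is equivalent to $\{g_{n}\}$ being a Riesz basis. Once that identification is in hand, Beurling's theorem and the adjoint computation above finish the argument, with no further analytic input needed.
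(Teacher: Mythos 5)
Your proposal is correct and follows essentially the same route as the paper: translate $H(g_n)=H(b)$ into backward-shift-invariance via Beurling, and convert that invariance into the bounded operator $T$ with $Tg_n=g_{n+1}$ through an adjoint computation on the analysis operator. Your treatment is somewhat more careful than the paper's in explicitly verifying that the frame bounds make $H(g_n)$ a closed subspace of $H^{2}(\mathbb{D})$ and that the failure of the Riesz basis property makes it proper, but the underlying argument is the same.
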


\begin{proof}
Suppose that $H(g_{n})=H(b)$ for some inner function
$b(z)$. Then $H(g_{n})$ is backward-shift-invariant, meaning for all $f\in H$, there is a $U(f)\in H$ such that
$$\sum_{n=0}^{\infty}\langle f,g_{n+1}\rangle (re^{2\pi i \theta})^{n}=\sum_{n=0}^{\infty}\langle U(f),g_{n}\rangle (re^{2\pi i \theta})^{n}.$$
By linearity of the inner product and since $\{g_{n}\}$ is complete, $U$ is linear. $U$ is also bounded because $\{g_{n}\}$ is a frame. Therefore, $U^{*}(g_{n})=g_{n+1}$ for all $n$.

Now suppose that there exists a bounded operator $T$ such that $T(g_{n})=g_{n+1}$ for all $n$.
Then take any $f\in L^{2}(\mu)$. We have
$$\sum_{n=0}^{\infty}\langle f, g_{n+1}\rangle (re^{2\pi i \theta})^{n}=\sum_{n=0}^{\infty}\langle T^{*}(f), g_{n}\rangle (re^{2\pi i \theta})^{n}.$$ Then $H(g_{n})$ is backward shift invariant.

\end{proof}

These spaces also have a nice reproducing kernel:

\begin{prop}
Let $\{g_{n}\}_{n=0}^{\infty}$ be a frame in Hilbert space $H$ and $S$ be the frame operator of $\{g_{n}\}$. Then the reproducing kernel of $H(g_{n})$ for each $w\in \mathbb{D}$ is 
$$K_{w}(z)=\sum_{k=0}^{\infty}\sum_{n=0}^{\infty}\langle S^{-1}(g_{n}),g_{k}\rangle (\overline{w})^{n}z^{k},$$
meaning for all $w\in \mathbb{D}$ and $f(z)\in H(g_{n})$
$$f(w)=\langle f(z),K_{w}(z)\rangle_{H(g_{n})}$$ where $z=re^{2\pi i \theta}$.
\end{prop}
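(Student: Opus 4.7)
The plan is to exhibit the alleged kernel as the image under $A_{g_n}$ of a concrete vector in $H$, and then verify the reproducing identity on $H(g_n)$ equipped with the inner product inherited from $h^2(\mathbb{D})$ (for which $\{z^k\}_{k=0}^\infty$ is an orthonormal basis of the analytic subspace $H^2(\mathbb{D}) \supseteq H(g_n)$, since by convention $g_n = 0$ for $n<0$).

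First I would define
$$\phi_w = \sum_{n=0}^{\infty} \overline{w}^{n}\, S^{-1}(g_n) \in H.$$
This series converges because $\{S^{-1}(g_n)\}$ is the canonical dual frame of $\{g_n\}$ and is therefore Bessel, while $(\overline{w}^n)_{n\ge 0} \in \ell^2$ for $w\in\mathbb{D}$. Expanding
$$A_{g_n}(\phi_w)(z) = \sum_{k=0}^\infty \langle \phi_w, g_k\rangle\, z^k = \sum_{k=0}^\infty \sum_{n=0}^\infty \langle S^{-1}(g_n), g_k\rangle\, \overline{w}^n\, z^k$$
recovers the stated formula, so in particular $K_w \in H(g_n)$.

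Next, for the reproducing property, take $f \in H(g_n)$ and write $f = A_{g_n}(h)$ for the unique $h\in H$; uniqueness holds since the frame $\{g_n\}$ is complete, making $A_{g_n}$ injective. Then $f(w) = \sum_k \langle h, g_k\rangle\, w^k$. Using self-adjointness of $S^{-1}$ to rewrite $\overline{\langle S^{-1}(g_n), g_k\rangle} = \langle S^{-1}(g_k), g_n\rangle$, I would compute
$$\langle f, K_w\rangle_{h^2} = \sum_{k=0}^\infty \langle h, g_k\rangle\, \overline{\langle \phi_w, g_k\rangle} = \sum_{n=0}^\infty w^n \sum_{k=0}^\infty \langle h, g_k\rangle\, \langle S^{-1}(g_k), g_n\rangle,$$
where the interchange of sums is justified by absolute convergence coming from Bessel bounds. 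The canonical frame reconstruction $h = \sum_k \langle h, g_k\rangle\, S^{-1}(g_k)$ collapses the inner sum to $\langle h, g_n\rangle$, yielding $\langle f, K_w\rangle_{h^2} = \sum_n w^n \langle h, g_n\rangle = f(w)$.

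The main subtlety is fixing the correct Hilbert-space structure on $H(g_n)$: the formula as stated is the reproducing kernel with respect to the $h^2(\mathbb{D})$ inner product, not the pullback of the norm on $H$ along $A_{g_n}$. Once that choice is made explicit, the only technical ingredients are the Bessel bound on $\{S^{-1}(g_n)\}$ (for convergence of $\phi_w$ and the sum swap) and the frame reconstruction formula, both standard.
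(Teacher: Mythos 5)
Your proof is correct in substance and lands on the same kernel element as the paper, but it gets there by a different route. The paper's argument is structural: it observes that $A_{g_{n}}^{*}A_{g_{n}}=S$, hence $A_{g_{n}}S^{-1}A_{g_{n}}^{*}$ is the orthogonal projection of $h^{2}(\mathbb{D})$ onto $H(g_{n})$, and then invokes the general fact that the reproducing kernel of a closed subspace is the projection of the ambient kernel, applied to the Szeg\H{o} kernel $\sum_{n}(\overline{w})^{n}z^{n}$. Your $\phi_{w}$ is exactly $S^{-1}A_{g_{n}}^{*}$ applied to the Szeg\H{o} kernel, so the two proofs produce the same vector; the difference is that you verify the reproducing identity by hand via the frame reconstruction formula rather than quoting the projection lemma. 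Your version is more self-contained and makes explicit two points the paper leaves implicit, namely that $K_{w}\in H(g_{n})$ and that the inner product on $H(g_{n})$ is the one inherited from $h^{2}(\mathbb{D})$; the paper's version is shorter and makes the operator identity $A_{g_{n}}S^{-1}A_{g_{n}}^{*}=P_{H(g_{n})}$ visible, which is reusable elsewhere.

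One small repair: your justification of the sum interchange by ``absolute convergence coming from Bessel bounds'' does not quite hold as stated, since $(\langle h,g_{k}\rangle)_{k}$ is only in $\ell^{2}$, not $\ell^{1}$, so the double series need not converge absolutely. The interchange is nonetheless valid, and the cleanest fix avoids it entirely: write
$$\langle f,K_{w}\rangle_{h^{2}}=\langle A_{g_{n}}h,A_{g_{n}}\phi_{w}\rangle_{h^{2}}=\langle h,A_{g_{n}}^{*}A_{g_{n}}\phi_{w}\rangle_{H}=\langle h,S\phi_{w}\rangle_{H}=\sum_{n=0}^{\infty}w^{n}\langle h,g_{n}\rangle=f(w),$$
where $S\phi_{w}=\sum_{n}\overline{w}^{n}g_{n}$ by continuity of $S$. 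Alternatively, in your own computation replace the Fubini step by continuity of the inner product against the norm-convergent series $Sh=\sum_{k}\langle h,g_{k}\rangle g_{k}$.
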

\begin{proof}
Notice that $A^{*}_{g_{n}}A_{g_{n}}=S$ and $A_{g_{n}}S^{-1}A^{*}_{g_{n}}$ is the orthogonal projection onto $H(g_{n})$.
Therefore, since $\sum_{n=0}^{\infty}(\overline{w})^{n}z^{n}$ is the reproducing kernel of $H^{2}(\mathbb{D})$,  $$A_{g_{n}}S^{-1}A^{*}_{g_{n}}\sum_{n=0}^{\infty}(\overline{w})^{n}z^{n}=\sum_{k=0}^{\infty}\sum_{n=0}^{\infty}\langle S^{-1}(g_{n}),g_{k}\rangle (\overline{w})^{n}z^{k}$$ is the reproducing kernel of $H(g_{n})$.
\end{proof}
The next result is a classification of frames using the normalized Cauchy transform.
\begin{cor}
    Suppose that $\{g_{n}\}_{n=0}^{\infty}$ is a frame and not a Riesz basis for Hilbert space $H$ and there exists $B\in B(H)$ such that $B(g_{n})=g_{n+1}$ for all $n$. Then $\{g_{n}\}$ is equivalent to a sequence $\{h_{n}\}$ defined by $V_{\mu}$ where $\mu$ is singular.
\end{cor}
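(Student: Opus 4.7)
The plan is to apply Proposition~\ref{classify}, realize the resulting model space as an $L^{2}(\mu)$ via Sarason's theorem on the normalized Cauchy transform, and then invoke the standard characterization of equivalent frames through equality of analysis-operator ranges.

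First, from the hypothesis, Proposition~\ref{classify} yields $H(g_{n})=H(b)$ for some inner function $b$. By the Herglotz correspondence (inverting the direction used in Sarason's theorem), I would select a singular Borel probability measure $\mu$ on $[0,1)$ whose associated inner function is $b$. Then Sarason's theorem produces an isometric isomorphism $V_{\mu}:L^{2}(\mu)\to H(b)$. I take $\{h_{n}\}_{n=0}^{\infty}\subseteq L^{2}(\mu)$ to be the sequence satisfying $V_{\mu}(f)=\sum_{n=0}^{\infty}\langle f,h_{n}\rangle z^{n}$; existence of the $h_{n}$ follows from Riesz representation applied to the bounded $n$-th Taylor coefficient functional $f\mapsto \widehat{V_{\mu}f}(n)$.

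Next, I would verify that $\{h_{n}\}$ is a Parseval frame in $L^{2}(\mu)$: since $V_{\mu}$ is an isometry, $\sum_{n=0}^{\infty}|\langle f,h_{n}\rangle|^{2}=\|V_{\mu}f\|_{H^{2}(\mathbb{D})}^{2}=\|f\|_{L^{2}(\mu)}^{2}$. Under the natural identification $H^{2}(\mathbb{D})\cong \ell^{2}(\mathbb{N}_{0})$, the analysis operator of $\{h_{n}\}$ is exactly $V_{\mu}$, whose range is $H(b)$; this coincides with the range of $A_{g_{n}}$ by the first step.

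To exhibit the equivalence, let $T_{g}:\ell^{2}(\mathbb{N}_{0})\to H$ and $T_{h}:\ell^{2}(\mathbb{N}_{0})\to L^{2}(\mu)$ denote the synthesis operators of $\{g_{n}\}$ and $\{h_{n}\}$. Both are surjective (since the sequences are frames) and share the common kernel $H(b)^{\perp}$, because the kernel of each synthesis operator equals the orthogonal complement of the range of the corresponding analysis operator. Defining $U:H\to L^{2}(\mu)$ by $U(T_{g}a)=T_{h}a$ is therefore well-posed, and two applications of the open mapping theorem make $U$ bounded and invertible. Since $U g_{n}=U(T_{g}e_{n})=T_{h}(e_{n})=h_{n}$, this realizes the claimed equivalence. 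The main obstacle is mostly bookkeeping: the only genuinely new ingredients beyond Proposition~\ref{classify} are the model-space realization of $V_{\mu}(L^{2}(\mu))$ and the abstract principle that frames with identical analysis-operator ranges are similar, both of which are classical.
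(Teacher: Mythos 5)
Your proposal is correct, but it reaches the conclusion by a different mechanism than the paper. After the common first step (Proposition~\ref{classify} gives $H(g_{n})=H(b)$, and Sarason's theorem supplies the singular $\mu$ and the Parseval frame $\{h_{n}\}$ with $\operatorname{ran}V_{\mu}=H(b)$), the paper finishes by invoking its reproducing-kernel formula: since $H(g_{n})$ and $H(b)$ are the same subspace of $H^{2}(\mathbb{D})$ with the same kernel, matching coefficients in the two kernel expansions yields $\langle S^{-1}g_{n},g_{k}\rangle_{H}=\langle h_{n},h_{k}\rangle_{L^{2}(\mu)}$, i.e.\ equality of the Gram matrices of $\{S^{-1/2}g_{n}\}$ and $\{h_{n}\}$, whence a unitary $W$ with $WS^{-1/2}g_{n}=h_{n}$. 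You instead bypass the kernel entirely and use the classical criterion that two frames are similar precisely when their analysis operators have the same (closed) range: identifying $H^{2}(\mathbb{D})$ with $\ell^{2}(\mathbb{N}_{0})$, both analysis ranges equal $H(b)$, so the synthesis operators share the kernel $H(b)^{\perp}$ and $U(T_{g}a)=T_{h}a$ is a well-defined bounded invertible map sending $g_{n}$ to $h_{n}$. Both arguments are sound; the paper's buys an explicit form of the equivalence (the canonical Parseval frame $\{S^{-1/2}g_{n}\}$ is \emph{unitarily} equivalent to $\{h_{n}\}$, so the similarity is $WS^{-1/2}$), while yours is more self-contained at this point in the paper, needing only the elementary facts $\ker\Theta^{*}=(\operatorname{ran}\Theta)^{\perp}$ and the open mapping theorem rather than the reproducing-kernel proposition. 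The only point worth making explicit in your write-up is that the range of a frame's analysis operator is closed (the operator is bounded below), which is what justifies treating $H(b)$ as exactly that range rather than merely its closure.
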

\begin{proof}
By Proposition \ref{classify}, there is inner function $b(z)$ so that $H(g_{n})=H(b).$ Now let $\{h_{n}\}$ be the Parseval frame defined from $V_{\mu}$ where $\mu$ is the finite singular measure on $[0,1)$ associated with $b(z)$ via the Herglotz theorem. By uniqueness of reproducing kernels, we have for every $n$ and $k$,
$$\langle S^{-1}g_{n},g_{k}\rangle_{H}=\langle h_{n},h_{k}\rangle_{L^{2}(\mu)}$$ since the frame operator for a Parseval frame is the identity.

It follows that since $S$ is positive definite, $\{S^{-\frac{1}{2}}(g_{n})\}$ is unitarly equivalent to $\{h_{n}\}$.
\end{proof}

\subsubsection{Examples}

We close this subsection with a couple of examples of Parseval frames and their connection to model spaces.

\begin{ex}
Let $H$ be a Hilbert space with orthonormal basis $\{e_{n}\}_{n=0}^{\infty}$.
We have the set
$$\{g_{n}\}_{n=0}^{\infty}=\{e_{0},e_{1},\frac{1}{\sqrt{2}}e_{2},\frac{1}{\sqrt{2}}e_{2}, \frac{1}{\sqrt{3}}e_{3},\frac{1}{\sqrt{3}}e_{3},\frac{1}{\sqrt{3}}e_{3},\dots\}$$
is a Parseval frame for $H$ and $H(g_{n})$ is not a model space by Proposition \ref{classify}.
\end{ex}

\begin{ex}
Let $\mu$ be a singular Borel probability measure on $\mathbb{R}$ and let $\{g_{n}\}$ be the auxiliary sequence associated with Hilbert space
$$\overline{span}\{e^{2\pi i nx}: n\in \mathbb{N}\}\subseteq L^{2}(\mu)$$ and sequence $\{e^{2\pi i nx}\}_{n=0}^{\infty}$, we have $$\overline{span}\{e^{2\pi i nx}: n\in \mathbb{N}\}(g_{n})=H(b)$$ where $b(z)$ is the inner function corresponding to the measure $v$ via the Herglotz theorem where
$$v(E)=\sum_{k=-\infty}^{\infty}\mu(E+k).$$  This follows from the fact that sequences $\{e^{2\pi i nx}\}_{n=0}^{\infty}$ are unitarly equivalent in $L^{2}(\mu)$ and $L^{2}(v)$. The auxiliary sequences in each measure space are also unitarly equivalent as a result.
\end{ex}

\subsection{Correspondence with subspaces of $h^{2}(\mathbb{D})$}
Another result from Beurling \cite{Beurling1948Two} is that the subspaces of $h^{2}(\mathbb{D})$ that are invariant under multiplication by $z$ and $\overline{z}$ are exactly $f(z)h^{2}(\mathbb{D})$ where $|f|$ is zero or one almost everywhere on $\partial \mathbb{D}$.
We have a similar result from techniques from this section:
\begin{prop}
For a Hilbert space $H$ with frame $\{g_{n}\}_{n=-\infty}^{\infty}$, the following are equivalent:
\begin{enumerate}
\item $H(g_{n})=f(z)h^{2}(\mathbb{D})$ where $|f|$ is zero or one almost everywhere on $\partial \mathbb{D}$.
    \item $\{g_{n}\}=\{T^{n}g_{0}\}$ for some bounded invertible operator $T$.
    \item $\{g_{n}\}$ is similar to Parseval frame $\{\chi_{E}e^{2\pi i nx}\}_{n=-\infty}^{\infty}$ in Hilbert space $\chi_{E}L^{2}([0,1))$ for some $E\subseteq [0,1)$ measurable.
\end{enumerate}.
\end{prop}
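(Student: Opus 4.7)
The plan is to prove the cyclic implications (1) $\Rightarrow$ (2) $\Rightarrow$ (3) $\Rightarrow$ (1), applying Beurling's classification of doubly-invariant subspaces twice: once on the image of the boundary-value map and once on the kernel of the synthesis operator. The central object is the bounded linear map $\psi : H \to L^{2}(\partial \mathbb{D})$ obtained by composing $A_{g_n}$ with the boundary-trace isometry $h^{2}(\mathbb{D}) \to L^{2}(\partial \mathbb{D})$, so that $\psi(h) = \sum_{n} \langle h, g_{n}\rangle e^{2\pi i n\theta}$. Because $\{g_{n}\}$ is a frame, $\psi$ is bounded above and below, and it is injective (both because $\{g_{n}\}$ is complete and because the boundary-trace map on $h^{2}(\mathbb{D})$ is injective), so $\psi : H \to \psi(H)$ is a bounded linear bijection with bounded inverse.

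For (1) $\Rightarrow$ (2), Beurling gives $\psi(H) = \chi_{E} L^{2}(\partial \mathbb{D})$ with $E = \{|f|=1\}$, which is invariant under multiplication by $e^{\pm 2\pi i\theta}$. Pulling these multiplications back through $\psi$ produces bounded operators $U, V$ on $H$, and reading off $n$th Fourier coefficients yields $U^{*} g_{n} = g_{n-1}$ and $V^{*} g_{n} = g_{n+1}$. Since $U^{*}V^{*} = V^{*}U^{*} = I$ on the dense span of $\{g_{n}\}$, we get $V^{*} = (U^{*})^{-1}$, so $T := V^{*}$ is bounded invertible with $T g_{n} = g_{n+1}$, i.e., $g_{n} = T^{n} g_{0}$. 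For (2) $\Rightarrow$ (3), consider the synthesis operator $\Phi : \ell^{2}(\mathbb{Z}) \to H$, $\Phi((c_{n})) = \sum c_{n} g_{n}$, which is bounded by Bessel and surjective by the frame property. From $T g_{n} = g_{n+1}$ we get the equivariance $\Phi \circ S^{\pm 1} = T^{\pm 1} \circ \Phi$ where $S$ is the shift on $\ell^{2}$, so $K := \ker \Phi$ is invariant under both $S$ and $S^{-1}$; transported via Fourier to $L^{2}([0,1))$, Beurling forces $K = \chi_{F} L^{2}([0,1))$ for some measurable $F \subseteq [0,1)$. Setting $E := [0,1) \setminus F$ and $W := \Phi|_{\chi_{E} L^{2}([0,1))}$, one checks that $W$ is a bounded bijection onto $H$ (injective because $\chi_{E}L^{2} \cap K = \{0\}$, surjective because every $\phi \in L^{2}$ agrees with $\chi_{E}\phi$ modulo $K$), hence has bounded inverse by open mapping. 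Since $\chi_{E} e^{2\pi i n x}$ and $e^{2\pi i n x}$ differ by the element $\chi_{F} e^{2\pi i n x} \in K$, we get $W(\chi_{E} e^{2\pi i n x}) = \Phi(e^{2\pi i n x}) = g_{n}$.

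The remaining implication (3) $\Rightarrow$ (1) is essentially immediate from an adjoint computation: with $W(\chi_{E} e^{2\pi i n x}) = g_{n}$, the identity $\langle h, g_{n}\rangle = \langle W^{*}h, \chi_{E} e^{2\pi i n x}\rangle$ identifies $\psi(h)$ with $W^{*} h \in \chi_{E} L^{2}(\partial \mathbb{D})$, so $\psi(H) = W^{*}(H) = \chi_{E} L^{2}(\partial \mathbb{D})$ and $H(g_{n}) = \chi_{E}(z) h^{2}(\mathbb{D})$, which is of the form $f(z) h^{2}(\mathbb{D})$ with $f = \chi_{E}$ satisfying $|f| \in \{0,1\}$ almost everywhere. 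The main obstacle is bookkeeping rather than ideas: Beurling's theorem is stated for $h^{2}(\mathbb{D})$ but must be applied by transporting through the boundary trace to $L^{2}(\partial \mathbb{D})$, and the two Beurling sets arising in the proof (from the image of $\psi$ and from the kernel of $\Phi$) play complementary roles, so one must track carefully which side carries which set and avoid conflating them.
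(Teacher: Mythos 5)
The paper states this proposition without giving any proof, so there is nothing to compare against directly; your argument is correct and is the natural extension of the technique the paper uses for Proposition 5.3 (pulling shift-invariance back and forth through the analysis/synthesis operators and invoking the doubly-invariant-subspace theorem on the boundary $L^{2}$ space). All three implications check out: the frame bounds make $\psi$ bounded below with closed range so the pulled-back multiplication operators are bounded and mutually inverse; the kernel of the synthesis operator is closed and bilaterally shift-invariant, hence of the form $\chi_{F}L^{2}$, and restricting to the complementary $\chi_{E}L^{2}$ gives the required bounded bijection; and the adjoint computation in $(3)\Rightarrow(1)$ correctly identifies $\psi(H)$ with $\chi_{E}L^{2}(\partial\mathbb{D})$.
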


\subsubsection{Examples}
Now we discuss examples of subspaces of $h^{2}(\mathbb{D})$ arising from $H(g_{n})$ where $\{g_{n}\}$ is dextrodual to $\{e^{2\pi inx}\}$. 

\begin{ex}
    If $\mu$ is an absolutely continuous Borel measure on $[0,1)$ with Radon-Nikodym  derivative $g$ bounded above and below on its support, it is clear that 
    $$L^{2}(\mu)(\chi_{\{x: g(x)>0\}}\frac{e^{2\pi i nx}}{g})=\chi_{\{x: g(x)>0\}}L^{2}([0,1))=f(z)h^{2}(\mathbb{D})$$ where $f(z)=\sum_{n}\langle \chi_{\{x: g(x)>0\}},e^{2\pi i nx}\rangle r^{|n|}e^{2\pi i nx}$.
\end{ex}
\begin{ex}
Let $\mu=\sum_{k=1}^{n}a_{k}\delta_{b_{k}}+\mu_{a}$ be a measure from Proposition \ref{discretecase}. It is clear that the $H(g_{n})$ from this proposition is not invariant under multiplication by $z$ or $\overline{z}$ since the $\tilde{f}$ in Proposition \ref{discretecase} is a step function on an interval.
\end{ex}
\subsubsection{Boundary behavior of $A_{g_{n}}$}
We conclude with our statement about our dextroduality relating to boundary behavior of $A_{g_{n}}$.

\begin{thm}
Let $\mu$ be a finite Borel measure on $[0,1)$ such that there is Bessel sequence $\{g_{n}\}_{n=-\infty}^{\infty}$ that is dextrodual to $\{e^{2\pi inx}\}$. 
Then for all $f\in L^{2}(\mu)$,
$$A_{g_{n}}(f)(re^{2\pi i \theta})\to f$$ in $L^{2}(\mu)$ as $r\to 1^{-}.$
\end{thm}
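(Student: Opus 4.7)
Set $c_n := \langle f, g_n\rangle$. The Bessel hypothesis gives $\{c_n\} \in \ell^2(\mathbb{Z})$ with $\sum_n |c_n|^2 \le B\|f\|_{L^2(\mu)}^2$. For every $0 < r < 1$, the finiteness of $\mu$ together with Cauchy--Schwarz yields
\[
\sum_{n \in \mathbb{Z}} |c_n| r^{|n|} \|e^{2\pi i n x}\|_{L^2(\mu)} \le \sqrt{\mu([0,1))}\,\|c\|_{\ell^2}\Bigl(\sum_{n} r^{2|n|}\Bigr)^{1/2} < \infty,
\]
so the series
\[
A_r(\theta) \;:=\; A_{g_n}(f)(re^{2\pi i\theta}) \;=\; \sum_{n \in \mathbb{Z}} c_n r^{|n|} e^{2\pi i n\theta}
\]
converges absolutely in $L^2(\mu)$. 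By dextroduality, the symmetric partial sums $s_N := \sum_{n=-N}^N c_n e^{2\pi i n x}$ satisfy $s_N \to f$ in $L^2(\mu)$, and in particular the sequence $\{s_N\}$ is bounded in $L^2(\mu)$.

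The heart of the argument is an Abel summation carried out in the Banach space $L^2(\mu)$. Grouping equal-$|n|$ pairs, set $b_0 := c_0$ and $b_N := c_N e^{2\pi i N x} + c_{-N} e^{-2\pi i N x}$ for $N \ge 1$, so that $A_r = \sum_{N=0}^\infty b_N r^N$ and $s_N = \sum_{k=0}^N b_k$. Writing $b_N = s_N - s_{N-1}$ with $s_{-1} := 0$ and rearranging (which is justified, for $r<1$, by the bound $\sum_N \|s_N\|\, r^N < \infty$), one obtains
\[
A_r \;=\; \sum_{N=0}^\infty (s_N - s_{N-1}) r^N \;=\; (1-r)\sum_{N=0}^\infty s_N r^N.
\]
Since $(1-r)\sum_{N=0}^\infty r^N = 1$, this rewrites as $A_r - f = (1-r)\sum_{N=0}^\infty (s_N - f) r^N$.

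From here the estimate is classical. Given $\varepsilon > 0$, choose $N_0$ so that $\|s_N - f\|_{L^2(\mu)} < \varepsilon$ for all $N \ge N_0$, and put $M := \max_{0 \le N < N_0} \|s_N - f\|_{L^2(\mu)}$. Splitting the sum,
\[
\|A_r - f\|_{L^2(\mu)} \;\le\; (1-r)\sum_{N=0}^{N_0-1} M r^N + (1-r)\sum_{N=N_0}^\infty \varepsilon r^N \;\le\; (1-r) N_0 M + \varepsilon,
\]
so $\limsup_{r \to 1^-} \|A_r - f\|_{L^2(\mu)} \le \varepsilon$, and letting $\varepsilon \to 0$ finishes the proof.

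There is no serious obstacle once the framework is set up; the only bookkeeping point is that the dextrodual definition permits conditional convergence, so one must interpret $\sum_{n=-\infty}^\infty$ in equation (\ref{prp}) via the symmetric partial sums $s_N$. This is the standard convention for bilateral Fourier-type expansions and is exactly the ordering that matches the Abel pairing used above; once it is in place, the argument is a direct transcription of the real-line Abel theorem into the Banach space $L^2(\mu)$.
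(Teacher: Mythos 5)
Your proof is correct and follows the same route as the paper, which simply invokes ``Abel summability in Hilbert spaces'' where you write out the summation-by-parts argument in $L^{2}(\mu)$ explicitly. Your added care about interpreting the bilateral sum via symmetric partial sums (so that the grouping matches the $r^{|n|}$ weights) is a detail the paper glosses over, but it does not change the substance of the argument.
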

In other words, if $\mu$ has the property from equation (\ref{prp}), then for each $f\in L^{2}(\mu)$, $A(g_{n})(f)$ is continuous in $r$ on $[0,1]$ into $L^{2}(\mu)$.
\begin{proof}
We need to show for all $f\in L^{2}(\mu)$,
$$\sum_{n}\langle f,g_{n}\rangle r^{|n|}e^{2\pi i n \theta}\to \sum_{n}\langle f,g_{n}\rangle e^{2\pi i n \theta}$$ in $L^{2}(\mu)$ as $r\to 1^{-}.$
This follows from Abel summability in Hilbert spaces since $\sum_{n}\langle f,g_{n}\rangle r^{|n|}e^{2\pi i n \theta}$ exists in $L^{2}(\mu)$ for all $r\in [0,1]$ by the dextroduality property and fact that 
$\sum_{n}|\langle f,g_{n}\rangle|^{2}<\infty$ for all $f\in L^{2}(\mu)$.
\end{proof}

\printbibliography

@Article{Herr2017Fourier,
AUTHOR = {Herr, John E. and Weber, Eric S.},
TITLE = {Fourier Series for Singular Measures},
JOURNAL = {Axioms},
VOLUME = {6},
YEAR = {2017},
NUMBER = {2},
ARTICLE-NUMBER = {7},
URL = {https://www.mdpi.com/2075-1680/6/2/7},
ISSN = {2075-1680},
ABSTRACT = {Using the Kaczmarz algorithm, we prove that for any singular Borel probability measure μ on [ 0 , 1 ) , every f ∈ L 2 ( μ ) possesses a Fourier series of the form f ( x ) = ∑ n = 0 ∞ c n e 2 π i n x . We show that the coefficients c n can be computed in terms of the quantities f ^ ( n ) = ∫ 0 1 f ( x ) e − 2 π i n x d μ ( x ) . We also demonstrate a Shannon-type sampling theorem for functions that are in a sense μ -bandlimited.},
DOI = {10.3390/axioms6020007}
}

@incollection {Herr2022Fourier,
    AUTHOR = {Herr, John E. and Jorgensen, Palle E. T. and Weber, Eric S.},
     TITLE = {Fourier {S}eries for {F}ractals in {T}wo {D}imensions},
 BOOKTITLE = {From classical analysis to analysis on fractals},
    SERIES = {Appl. Numer. Harmon. Anal.},
     PAGES = {183--229},
 PUBLISHER = {Birkh\"{a}user/Springer, Cham},
      YEAR = {2023},
      ISBN = {978-3-031-37799-0; 978-3-031-37800-3},
   MRCLASS = {42 (28)},
  MRNUMBER = {4676388},
       DOI = {10.1007/978-3-031-37800-3\_9},
       URL = {https://doi.org/10.1007/978-3-031-37800-3_9},
}

@article{Kwapien2001Kaczmarz,
abstract = {The Kaczmarz algorithm of successive projections suggests the following concept. A sequence $(e_\{k\})$ of unit vectors in a Hilbert space is said to be effective if for each vector x in the space the sequence (xₙ) converges to x where (xₙ) is defined inductively: x₀ = 0 and $xₙ = x_\{n-1\} + αₙeₙ$, where $αₙ = ⟨x - x_\{n-1\},eₙ⟩$. We prove the effectivity of some sequences in Hilbert spaces. We generalize the concept of effectivity to sequences of vectors in Banach spaces and we prove some results for this more general concept.},
author = {Stanisław Kwapień and Jan Mycielski},
journal = {Studia Mathematica},
language = {eng},
number = {1},
pages = {75-86},
title = {On the Kaczmarz algorithm of approximation in infinite-dimensional spaces},
url = {http://eudml.org/doc/284583},
volume = {148},
year = {2001},
}

@article{Haller2005Kaczmarz,
abstract = {The aim of the Kaczmarz algorithm is to reconstruct an element in a Hilbert space from data given by inner products of this element with a given sequence of vectors. The main result characterizes sequences of vectors leading to reconstruction of any element in the space. This generalizes some results of Kwapień and Mycielski.},
author = {Rainis Haller and Ryszard Szwarc},
journal = {Studia Mathematica},
keywords = {Kaczmarz algorithm; Hilbert space},
language = {eng},
number = {2},
pages = {123-132},
title = {Kaczmarz algorithm in Hilbert space},
url = {http://eudml.org/doc/284674},
volume = {169},
year = {2005},
}

@article {Jorgensen1998Dense,
    AUTHOR = {Jorgensen, Palle E. T. and Pedersen, Steen},
     TITLE = {Dense analytic subspaces in fractal {$L^2$}-spaces},
   JOURNAL = {J. Anal. Math.},
  FJOURNAL = {Journal d'Analyse Math\'{e}matique},
    VOLUME = {75},
      YEAR = {1998},
     PAGES = {185--228},
      ISSN = {0021-7670,1565-8538},
   MRCLASS = {46E30 (28A75 42C05 46L55 47B38)},
  MRNUMBER = {1655831},
MRREVIEWER = {Javier\ Soria},
       DOI = {10.1007/BF02788699},
       URL = {https://doi.org/10.1007/BF02788699},
}

@article {He2013Exponential,
    AUTHOR = {He, Xing-Gang and Lai, Chun-Kit and Lau, Ka-Sing},
     TITLE = {Exponential spectra in {$L^2(\mu)$}},
   JOURNAL = {Appl. Comput. Harmon. Anal.},
  FJOURNAL = {Applied and Computational Harmonic Analysis. Time-Frequency
              and Time-Scale Analysis, Wavelets, Numerical Algorithms, and
              Applications},
    VOLUME = {34},
      YEAR = {2013},
    NUMBER = {3},
     PAGES = {327--338},
      ISSN = {1063-5203,1096-603X},
   MRCLASS = {42C15 (46E30)},
  MRNUMBER = {3027906},
MRREVIEWER = {Keri\ A.\ Kornelson},
       DOI = {10.1016/j.acha.2012.05.003},
       URL = {https://doi.org/10.1016/j.acha.2012.05.003},
}

@article {Picioroaga2017Fourier,
    AUTHOR = {Picioroaga, Gabriel and Weber, Eric S.},
     TITLE = {Fourier frames for the {C}antor-4 set},
   JOURNAL = {J. Fourier Anal. Appl.},
  FJOURNAL = {The Journal of Fourier Analysis and Applications},
    VOLUME = {23},
      YEAR = {2017},
    NUMBER = {2},
     PAGES = {324--343},
      ISSN = {1069-5869,1531-5851},
   MRCLASS = {42C15 (28A80 42B05 46L89)},
  MRNUMBER = {3622655},
MRREVIEWER = {Krishnan\ Parthasarathy},
       DOI = {10.1007/s00041-016-9471-0},
       URL = {https://doi.org/10.1007/s00041-016-9471-0},
}

@article {Rohlin1949Fundamental,
    AUTHOR = {Rohlin, V. A.},
     TITLE = {On the fundamental ideas of measure theory},
   JOURNAL = {Mat. Sbornik N.S.},
  FJOURNAL = {Mat. Sbornik N.S.},
    VOLUME = {25/67},
      YEAR = {1949},
     PAGES = {107--150},
   MRCLASS = {27.2X},
  MRNUMBER = {30584},
MRREVIEWER = {P.\ R.\ Halmos},
}

@article {Rohlin1949Decomposition,
    AUTHOR = {Rohlin, V. A.},
     TITLE = {On the decomposition of a dynamical system into transitive
              components},
   JOURNAL = {Mat. Sbornik N.S.},
    VOLUME = {25(67)},
      YEAR = {1949},
     PAGES = {235--249},
   MRCLASS = {46.3X},
  MRNUMBER = {0032958},
MRREVIEWER = {P. R. Halmos},
}

@article {Lai2011Fourier,
    AUTHOR = {Lai, Chun-Kit},
     TITLE = {On {F}ourier frame of absolutely continuous measures},
   JOURNAL = {J. Funct. Anal.},
  FJOURNAL = {Journal of Functional Analysis},
    VOLUME = {261},
      YEAR = {2011},
    NUMBER = {10},
     PAGES = {2877--2889},
      ISSN = {0022-1236,1096-0783},
   MRCLASS = {42C15 (28A80 46B15)},
  MRNUMBER = {2832585},
MRREVIEWER = {Peter\ R.\ Massopust},
       DOI = {10.1016/j.jfa.2011.07.014},
       URL = {https://doi.org/10.1016/j.jfa.2011.07.014},
}

@book {Sarason1994Sub-Hardy,
    AUTHOR = {Sarason, Donald},
     TITLE = {Sub-{H}ardy {H}ilbert spaces in the unit disk},
    SERIES = {University of Arkansas Lecture Notes in the Mathematical
              Sciences},
    VOLUME = {10},
      NOTE = {A Wiley-Interscience Publication},
 PUBLISHER = {John Wiley \& Sons, Inc., New York},
      YEAR = {1994},
     PAGES = {xvi+95},
      ISBN = {0-471-04897-6},
   MRCLASS = {46E22 (30D55 30H05 46E20 47A45 47B35 47B38)},
  MRNUMBER = {1289670},
MRREVIEWER = {Thomas\ Kriete},
}

@article {Beurling1948Two,
    AUTHOR = {Beurling, Arne},
     TITLE = {On two problems concerning linear transformations in {H}ilbert
              space},
   JOURNAL = {Acta Math.},
  FJOURNAL = {Acta Mathematica},
    VOLUME = {81},
      YEAR = {1948},
     PAGES = {239--255},
      ISSN = {0001-5962,1871-2509},
   MRCLASS = {46.0X},
  MRNUMBER = {27954},
MRREVIEWER = {B.\ de Sz. Nagy},
       DOI = {10.1007/BF02395019},
       URL = {https://doi.org/10.1007/BF02395019},
}

@article {Christensen2020Frame,
    AUTHOR = {Christensen, Ole and Hasannasab, Marzieh and Philipp,
              Friedrich},
     TITLE = {Frame properties of operator orbits},
   JOURNAL = {Math. Nachr.},
  FJOURNAL = {Mathematische Nachrichten},
    VOLUME = {293},
      YEAR = {2020},
    NUMBER = {1},
     PAGES = {52--66},
      ISSN = {0025-584X,1522-2616},
   MRCLASS = {42C15 (30J05 30J10 47A05 94A20)},
  MRNUMBER = {4060361},
MRREVIEWER = {Timofey\ V.\ Rodionov},
       DOI = {10.1002/mana.201800344},
       URL = {https://doi.org/10.1002/mana.201800344},
}

@article {Forelli1975F,
    AUTHOR = {Forelli, Frank},
     TITLE = {The {F}. and {M}. {R}iesz theorem},
   JOURNAL = {Proc. Amer. Math. Soc.},
  FJOURNAL = {Proceedings of the American Mathematical Society},
    VOLUME = {47},
      YEAR = {1975},
     PAGES = {431},
      ISSN = {0002-9939,1088-6826},
   MRCLASS = {42A72},
  MRNUMBER = {355474},
MRREVIEWER = {Paul\ Milnes},
       DOI = {10.2307/2039760},
       URL = {https://doi.org/10.2307/2039760},
}

@article {Li2001Pseudo-duals,
    AUTHOR = {Li, Shidong and Ogawa, Hidemitsu},
     TITLE = {Pseudo-duals of frames with applications},
   JOURNAL = {Appl. Comput. Harmon. Anal.},
  FJOURNAL = {Applied and Computational Harmonic Analysis. Time-Frequency
              and Time-Scale Analysis, Wavelets, Numerical Algorithms, and
              Applications},
    VOLUME = {11},
      YEAR = {2001},
    NUMBER = {2},
     PAGES = {289--304},
      ISSN = {1063-5203,1096-603X},
   MRCLASS = {46B15 (42C15 46C05)},
  MRNUMBER = {1848709},
MRREVIEWER = {Ole\ Christensen},
       DOI = {10.1006/acha.2001.0347},
       URL = {https://doi.org/10.1006/acha.2001.0347},
}

@article {Duffin1952Class,
    AUTHOR = {Duffin, R. J. and Schaeffer, A. C.},
     TITLE = {A class of nonharmonic {F}ourier series},
   JOURNAL = {Trans. Amer. Math. Soc.},
  FJOURNAL = {Transactions of the American Mathematical Society},
    VOLUME = {72},
      YEAR = {1952},
     PAGES = {341--366},
      ISSN = {0002-9947,1088-6850},
   MRCLASS = {42.4X},
  MRNUMBER = {47179},
MRREVIEWER = {J.\ Korevaar},
       DOI = {10.2307/1990760},
       URL = {https://doi.org/10.2307/1990760},
}

@article {Dutkay2011Beurling,
    AUTHOR = {Dutkay, Dorin Ervin and Han, Deguang and Sun, Qiyu and Weber,
              Eric},
     TITLE = {On the {B}eurling dimension of exponential frames},
   JOURNAL = {Adv. Math.},
  FJOURNAL = {Advances in Mathematics},
    VOLUME = {226},
      YEAR = {2011},
    NUMBER = {1},
     PAGES = {285--297},
      ISSN = {0001-8708,1090-2082},
   MRCLASS = {42C15 (28A78 28A80)},
  MRNUMBER = {2735759},
MRREVIEWER = {Alfredo\ Lazaro\ Gonz\'{a}lez},
       DOI = {10.1016/j.aim.2010.06.017},
       URL = {https://doi.org/10.1016/j.aim.2010.06.017},
}
\end{document}